\documentclass[10pt]{article}
\usepackage[utf8]{inputenc} 
\usepackage[T1]{fontenc}    
\usepackage[hidelinks]{hyperref}       
\usepackage{url}            
\usepackage{booktabs}       
\usepackage{amsfonts}       
\usepackage{nicefrac}       
\usepackage{microtype}      
\usepackage[table]{xcolor}
\usepackage{natbib}
\usepackage{fullpage}
\usepackage{multirow}
\usepackage{soul}           

\newcommand{\Proj}{\textup{Proj}}

\newcommand{\calA}{\mathcal{A}}
\newcommand{\calB}{\mathcal{B}}
\newcommand{\calC}{\mathcal{C}}
\newcommand{\calS}{\mathcal{S}}
\newcommand{\calT}{\mathcal{T}}

\newcommand{\E}{\mathbb{E}}
\renewcommand{\P}{\mathbb{P}}

\newcommand{\dist}{\mathrm{dist}}
\newcommand{\ls}{\left}
\newcommand{\rs}{\right}

\newcommand{\interior}{\mathrm{int}}

\newcommand{\dom}{\mathrm{dom}}
\newcommand{\inner}[2]{\ls\langle #1, \; #2 \rs\rangle}

\newcommand\numberthis{\addtocounter{equation}{1}\tag{\theequation}}
\makeatletter

\newcommand{\Rmnum}[1]{\textup{\expandafter\@slowromancap\romannumeral #1@}}
\newcommand{\cmark}{\checkmark}
\newcommand{\xmark}{\ensuremath{\times}}
\makeatother

\usepackage[table]{xcolor}
\usepackage{tabstackengine}
\usepackage{appendix}
\usepackage{titletoc}
\usepackage{titlesec}
\usepackage{enumerate}
\usepackage{amsmath}
\usepackage{amssymb}
\usepackage{mathtools}
\usepackage{amsthm}
\usepackage{algorithm}
\usepackage{algorithmic}
\usepackage{graphicx}
\usepackage{makecell}
\usepackage{array}
\usepackage{threeparttable}
\usepackage{setspace}
\usepackage{dsfont}
\usepackage{multicol}
\usepackage{authblk}
\usepackage{lipsum}
\usepackage{tabularx}
\usepackage{enumitem}
\usepackage{subcaption}
\allowdisplaybreaks

\theoremstyle{plain}
\newtheorem{theorem}{Theorem}[section]

\newtheorem{lemma}[theorem]{Lemma}

\newtheorem{remark}{Remark}[section]
\theoremstyle{definition}

\newtheorem{assumption}[theorem]{Assumption}

\newtheorem{example}[theorem]{Example}

\title{On the Policy Convergence of Policy Mirror Descent Methods}
\author[1]{Wenye Li}
\author[1]{Ke Wei}
\affil[1]{School of Data Science, Fudan University, Shanghai, China.}
\date{\today}

\begin{document}
\maketitle
\begin{abstract}
We study the policy convergence of unregularized policy mirror descent (PMD) with arbitrary constant step sizes for finite discounted Markov decision processes. We focus on decomposable mirror maps of the form $h(p)=\sum_a \psi(p(a))$, where $\psi$ satisfies standard Legendre-type assumptions. Under these conditions, we prove that the policy sequence generated by PMD converges in the policy domain to a limiting optimal policy, even when the optimal policy set is not a singleton. This result covers a broad class of commonly used mirror maps, including the squared Euclidean mirror map underlying projected Q-ascent, the negative Shannon entropy underlying softmax natural policy gradient, Tsallis entropy, the Hellinger mapping, and the Fermi-Dirac entropy. Although policy convergence has been established previously for specific PMD instances or for regularized variants such as homotopic PMD,  to the best of our knowledge, this is the first systematic and unified policy convergence theory for unregularized PMD under general decomposable mirror maps and arbitrary constant step sizes. Our analysis further reveals that the convergence behavior is governed by the differentiability of $\psi$ at $0$ and $1$, leading to different behaviors, including finite-time termination, asymptotic convergence, and an MDP-dependent dichotomy. When $\psi$ is twice continuously differentiable with strictly positive finite curvature, we further establish local policy convergence rates for the asymptotic convergence cases, covering the standard mirror maps mentioned above.
\end{abstract}

\section{Introduction}
Reinforcement learning (RL) studies sequential decision-making problems and has found successful applications in robotics, scientific discovery, and the alignment of large language models~\citep{robot1,robot2,ref-AlphaFold,ref-AlphaTensor,gpt4,Zhang2025ASO}. The Markov decision process (MDP) provides a standard mathematical framework for RL. Let $\Delta(\mathcal{X})$ be the probability simplex over a finite set $\mathcal{X}$. In this paper, we consider a discounted finite MDP $(\calS, \, \calA, \, P, \, r, \, \gamma)$, where $\calS$ is the state space, $\calA$ is the action space, $P: \calS \times \calA \to \Delta(\calS)$ is the transition model, $r: \calS \times \calA \to [0,1]$ is the reward function, and $\gamma \in [0,1)$ is the discount factor. Under a given policy $\pi: \calS \to \Delta(\calA)$, the state value $V^\pi(s)$ is defined as the expected discounted cumulative reward starting from $s\in\calS$:
\begin{align*}
    V^\pi(s) := \underset{a_t \sim \pi(\cdot|s_t), \, s_{t+1} \sim P(\cdot|s_t, a_t)}{\E} \ls[ \sum_{t=0}^\infty \gamma^t r(s_t, a_t) \, \Big | \, s_0 = s, \, \pi \rs].
\end{align*}
Let $\Pi$ denote the set of all admissible policies. The goal of RL is to find an optimal policy that maximizes the expected state value,
\begin{align}  \label{eq:RL-goal}
    \max_{\pi \in \Pi} \; V^\pi(\mu) := \E_{s\sim\mu} [V^\pi(s)]  
\end{align}
where $\mu \in \Delta(\calS)$ is the initial state distribution. It is well-known that there exists at least one optimal policy $\pi^*$ that maximizes 
$V^\pi(s)$ for every $s\in\calS$, and hence also maximizes~\eqref{eq:RL-goal} 
for any initial distribution $\mu$  \citep[see, e.g.,][]{suttonRL}.

Policy mirror descent (PMD)~\citep{Lan_2021,Xiao_2022,schulman2015trust} instantiates the mirror descent framework~\citep{MD,nemirovski1983problem} in the policy space for maximizing the expected return~\eqref{eq:RL-goal}. Given a mirror map $h$, the Bregman divergence generated by $h$ is defined as:
\begin{align*}
    D_h(p \, \| \, q) := h(p) - h(q) - \inner{\nabla h(q)}{p-q},\quad p\in \text{dom}\,h,~q\in\text{dom}\nabla h.
\end{align*}
Generally, $h$ is taken to be a closed, proper, and convex function with $\text{int}(\text{dom} \, h) \neq \emptyset$. By the convexity of $h$, the Bregman divergence satisfies $D_h(p \, || \, q) \geq 0$. Prior work~\citep{Lan_2021,Xiao_2022} has shown that the mirror descent method equipped with a state-weighted Bregman divergence can be written in the following state-wise update:
\begin{align*}
    \forall\, s\in\calS: \quad \pi^+(\cdot|s) = \underset{p\in\Delta(\calA)}{\arg\max} \,\, \Big\{ \eta\inner{p}{Q^\pi(s,\cdot)} - D_h(p\, \| \, \pi(\cdot|s)) \Big\},
\end{align*}
where $\eta > 0$ is the update step size and $Q^\pi$ is the action value function defined by
\begin{align*}
    Q^\pi(s,a) := \underset{a_t\sim \pi(\cdot|s_t), \, s_{t+1} \sim P(\cdot|s_t, a_t)}{\E} \ls[ \sum_{t=0}^\infty \gamma^t r(s_t, a_t) \, \Big | \, (s_0, a_0) = (s,a) , \, \pi \rs].
\end{align*}

\subsection{Related work}
\paragraph{General PMD.} A basic convergence result for PMD is the global sublinear convergence of the value error at the rate of $O(1/k)$, established by~\citet{Lan_2021,Xiao_2022}. More precisely, if $h$ is Legendre, the state values of PMD converge to optimal value with the sublinear rate of $O(1/k)$ under any constant step size $\eta > 0$. Since many commonly used mirror maps, including the negative entropy and the squared Euclidean mirror map, satisfy the Legendre property, this result applies to a broad class of PMD instances. Beyond constant step sizes, \citet{Xiao_2022} showed that global linear convergence of the value function can be achieved by using geometrically increasing step-size schedules. Furthermore, a $\gamma$-rate global linear convergence was established by~\citet{Johnson_Pike-Burke_Rebeschini_2023} under adaptive step sizes.

Another line of work studies regularized PMD, where the original expected return is replaced by a regularized objective involving an additional policy regularization term, typically weighted by a coefficient $\tau > 0$.\footnote{Unless explicitly stated otherwise, we refer to PMD as the unregularized variant.} For a fixed regularization coefficient, the value functions generated by regularized PMD converge globally linearly to the corresponding regularized optimal value function under any constant step size~\citep{Lan_2021,Zhan_Cen_Huang_Chen_Lee_Chi_2021,Cen_Cheng_Chen_Wei_Chi_2022}. A notable variant is homotopic PMD (HPMD)~\citep{Li_Zhao_Lan_2022}, where the regularization coefficient is gradually decreased so that the regularized problem asymptotically approaches the original unregularized problem. Under adaptive step sizes together with a decreasing regularization coefficient, HPMD achieves global linear convergence toward the optimal unregularized value function. Furthermore, the policies generated by HPMD are shown to converge to the maximum-entropy optimal policy, reflecting the policy-selection effect induced by the vanishing regularization scheme.

\paragraph{Softmax NPG \& PQA.} When $h$ is chosen as the negative entropy $h(p) = \sum_{a\in\calA} p(a) \ln p(a)$, the induced Bregman divergence is exactly the Kullback-Leibler (KL) divergence, and the corresponding PMD update reduces to  the softmax natural policy gradient (softmax NPG) method~\citep{Agarwal_Kakade_Lee_Mahajan_2019,kakade2002npg}:
\begin{align}  \label{eq:softmax-NPG}
    \forall\, s\in\calS: \quad \pi_{k+1}(a|s) = \frac{\pi_k(a|s) \exp\big( \eta \, Q^{\pi_k}(s,a) \big)}{\sum_{a^\prime \in \calA} \pi_k(a^\prime | s) \exp \big ( \eta \, Q^{\pi_k}(s,a^\prime) \big)}.
\end{align}
 Leveraging this explicit formula, instance-specific convergence results have been derived for softmax NPG through direct analyses of the multiplicative update. A representative result states that softmax NPG enjoys local linear convergence at a rate of $O(\exp(-(1-1/\lambda) \eta \Delta))$ under a constant step size, where $\Delta>0$ denotes the MDP-dependent suboptimality gap and $\lambda \to +\infty$ along the iterations~\citep{Khodadadian_Jhunjhunwala_Varma_Maguluri_2021}. Subsequently, \citet{li2025phi} demonstrated that softmax NPG exhibits an exact asymptotic linear convergence rate of $\Theta(\exp(-\eta\Delta))$. Additionally, \citet{pg-liu} established global linear convergence for softmax NPG under a constant step size, although the resulting rate is implicit and does not yield an explicit expression in terms of problem-dependent parameters.

Another widely adopted mirror map is the squared $\ell_2$-norm, $h(p) = \frac{1}{2} \| p \|_2^2$. In this case, the Bregman divergence reduces to the squared $\ell_2$-distance, and the PMD update admits the explicit projected form:
\begin{align}  \label{eq:PQA}
    \forall\, s\in\calS: \quad \pi_{k+1}(\cdot|s) = \Proj_{\Delta(\calA)} \Big \{ \pi_k(\cdot|s) + \eta \, Q^{\pi_k}(s,\cdot) \Big \},
\end{align}
where $\Proj_{\Delta(\calA)}$ denotes the $\ell_2$-projection onto the probability simplex $\Delta(\calA)$. This method is known as projected Q-ascent (PQA)~\citep{Xiao_2022}. Similar to softmax NPG, more refined convergence guarantees can be obtained for PQA by leveraging this explicit update rule. For instance, by exploiting properties of the simplex projection, \citet{ppgliu} showed that PQA terminates in a finite number of iterations under any constant step size.

\paragraph{Policy convergence for special PMD instances and regularized variants.} Most existing PMD analyses focus on value convergence rather than policy convergence, even though value convergence alone does not imply policy convergence when multiple optimal policies exist~\citep{Li_Zhao_Lan_2022}. As already mentioned, \citet{ppgliu} demonstrated that PQA achieves finite-time convergence to an optimal policy for any constant step size. For softmax NPG,   global policy convergence was established in \citet{li2025phi} by showing that the policy trajectory is Cauchy. It is worth emphasizing that these policy convergence guarantees are restricted to specific PMD instances, and their analyses rely heavily on explicit policy update formulas. For a general mirror map $h$, \citet{Lin2022PMD-policy-convergence} showed that PMD with constant step size achieves finite-time convergence provided that $\nabla h$ exists on the entire probability simplex $\Delta(\calA)$ and $h$ is $L$-coercive. These conditions naturally cover the squared Euclidean mirror map that yields PQA, but exclude several commonly used mirror maps, either because they are not differentiable on the boundary of the simplex, as in the negative entropy mirror map used in softmax NPG, or because they fail the $L$-coercivity condition.

In a different direction, \citet{Li_Zhao_Lan_2022} proposed HPMD and showed that it converges to the maximum-entropy optimal policy $\pi_U$ under a decomposable mirror map $h(p) = \sum_i \psi(p_i)$ with $\psi$ being differentiable over $(0,+\infty)$. In particular, when $\psi$ is differentiable at $0$, HPMD reaches the optimal policy set in finitely many iterations, although its iterates may continue to move within the optimal set in order to converge to the maximum-entropy optimal policy $\pi_U$. However, since HPMD is fundamentally a variant of regularized PMD, its theoretical analysis relies crucially on the additional regularization term and the decreasing regularization schedule. These mechanisms are absent in vanilla unregularized PMD with a constant step size. Consequently, a systematic policy convergence theory for unregularized PMD under general mirror maps remains largely unresolved.

\subsection{Main contributions}
As noted above, most existing analyses of PMD focus on value convergence rather than policy convergence. Existing policy convergence guarantees for unregularized PMD are largely limited to specific instances, notably PQA~\citep{ppgliu,Lin2022PMD-policy-convergence} and softmax NPG~\citep{li2025phi}. Consequently, a unified policy convergence theory for unregularized PMD beyond these explicit-update cases remains incomplete. In this paper, we prove policy convergence of unregularized PMD with any constant step size for decomposable mirror maps of the form $h(p)=\sum_i \psi(p_i)$ under the following standard Legendre-type assumption on $\psi$ \citep{Rockafellar1970convex,bauschke1997legendre}.
\begin{assumption}  \label{ass:psi-Legendre}
    We assume that $\psi$ satisfies the following conditions:
    \begin{enumerate}[label=(\alph*)]
        \item \label{ass:psi-Legendre:dom} $\dom \, \psi \supseteq [0,1]$;
        \item $\psi$ is proper, closed, and convex;
        \item \label{ass:psi-Legendre:smooth}  $\psi$ is differentiable on $\interior \, \dom \, \psi$, and for any sequence $\{x_n\} \subseteq \interior \, \dom \, \psi$ converging to $x \in \mbox{bd} \, \dom\, \psi$, there holds $\big| \psi^\prime(x_n) \big| \to \infty$;
        \item \label{ass:psi-Legendre:convex}  $\psi$ is strictly convex on $\interior \, \dom \, \psi$.
    \end{enumerate}
\end{assumption}
Assumption~\ref{ass:psi-Legendre} is satisfied by a broad class of standard mirror maps, including
\begin{itemize}
    \item the squared function (corresponding to PQA): $\psi(x) = \frac{1}{2} x^2$, $\dom \, \psi = \mathbb{R}$, $\interior\, \dom \, \psi = \mathbb{R}$, $\mbox{bd} \, \dom\, \psi = \emptyset$;
    \item Tsallis entropy with $q > 1$: $\psi(x) = \frac{1}{q-1} |x|^q$, $\dom \, \psi = \mathbb{R}$, $\interior\, \dom \, \psi = \mathbb{R}$, $\mbox{bd} \, \dom\, \psi = \emptyset$;
    \item the negative Shannon entropy (corresponding to softmax NPG): $\psi(x) = x\ln x -x$, $\dom \, \psi = [0,+\infty)$, $\interior\, \dom \, \psi = (0, +\infty)$, $\mbox{bd} \, \dom\, \psi = \{ 0 \}$;
    \item Tsallis entropy with $0 < q < 1$: $\psi(x) = \frac{1}{q-1} x^q$, $\dom \, \psi = [0, +\infty)$, $\interior\, \dom \, \psi = (0, +\infty)$, $\mbox{bd} \, \dom\, \psi = \{ 0 \}$;
    \item Hellinger mapping: $\psi(x) = -\sqrt{1-x^2}$, $\dom \, \psi = [-1,1]$, $\interior\, \dom \, \psi = (-1, 1)$, $\mbox{bd} \, \dom\, \psi = \{ -1, 1 \}$;
    \item Fermi-Dirac entropy: $\psi(x) = x\ln x + (1-x) \ln (1-x)$, $\dom \, \psi = [0,1]$, $\interior\, \dom \, \psi = (0,1)$, $\mbox{bd} \, \dom\, \psi = \{ 0, 1 \}$.
\end{itemize}

In summary, we prove that for any decomposable mirror map $h(p)=\sum_i \psi(p_i)$ with $\psi$ satisfying Assumption~\ref{ass:psi-Legendre}, unregularized PMD with any constant step size generates a policy sequence $\{\pi_k\}_{k\ge 0}$ that admits a limit $\pi_\infty$,  and this limit is an optimal policy. This holds even when multiple optimal policies exist.
In particular,we show that the convergence behavior toward this limiting optimal policy depends significantly on the differentiability of $\psi$ at  $0$ and $1$:
\begin{itemize}
    \item \textbf{Case 1:} When $\psi$ is differentiable at both $0$ and $1$, we show that unregularized PMD with a constant step size terminates in a finite number of  iterations. More precisely, there exists a finite iteration $T$ such that $\pi_T$ is optimal and $\pi_k = \pi_T$ for all $k \geq T$. Moreover,  an explicit upper bound on $T$ is also derived. This case includes the squared mirror map $\psi(x) = \frac{1}{2} x^2$, corresponding to PQA, and the Tsallis entropy $\psi(x) = \frac{1}{q-1} |x|^q$ with $q > 1$. It is worth emphasizing that our result does not require the restrictive $L$-coercivity assumption imposed in~\citet{Lin2022PMD-policy-convergence}. This extension is substantive, since $L$-coercivity fails for  the Tsallis entropy when $1 < q < 2$.
    \item \textbf{Case 2:} When $\psi$ is non-differentiable at $0$ but differentiable at $1$, PMD does not generally terminate in finite time; instead, it converges asymptotically to a limiting optimal policy $\pi_\infty$ as $k \to \infty$. In addition, our local analysis yields matching local upper and lower bounds on the value error, as well as a local policy convergence rate characterizing the distance between $\pi_k$ and its limit $\pi_\infty$. Representative examples include the negative entropy $\psi(x) = x\ln x - x$, corresponding to softmax NPG, and the Tsallis entropy $\psi(x) = \frac{1}{q-1} x^q$ with $0 < q < 1$ on $\dom \, \psi = [0, +\infty)$.
    \item \textbf{Case 3:} When $\psi$ is differentiable at $0$ but non-differentiable at $1$, PMD exhibits an MDP-dependent dichotomy determined by whether states with unique optimal actions exist. If no state has a unique optimal action, PMD reaches an optimal policy in finitely many iterations and remains there thereafter. In contrast, if the MDP contains at least one state with a unique optimal action, finite-time termination cannot occur, and policy convergence is necessarily asymptotic. Thus, mirror maps with the same boundary behavior can lead to qualitatively different convergence behaviors depending on the optimal-action structure of the underlying MDP. A representative example is the Hellinger mapping $\psi(x)=-\sqrt{1-x^2}$.
    
    \item \textbf{Case 4:} When $\psi$ is non-differentiable at both $0$ and $1$, as in Case 2, PMD converges asymptotically to a limiting optimal policy. For this case,  local upper and lower bounds on the value error are also established, together with a local policy convergence rate characterizing the distance between $\pi_k$ and its limit $\pi_\infty$. A typical example here is the Fermi--Dirac entropy $\psi(x)=x\ln x+(1-x)\ln(1-x)$.
\end{itemize}

\begin{table}[t!]
\centering
\caption{Summary of convergence results for PMD under arbitrary constant step sizes.
All mirror maps listed below satisfy Assumption~\ref{ass:psi-Legendre}, and thus PMD equipped with any of these mirror maps generates a policy sequence that converges globally to an optimal policy in the policy domain.
Here, $\sigma\in(0,\Delta/2)$ is arbitrary,
$\mathcal S_{=1}:=\{s\in\mathcal S:|\mathcal A_s^*|=1\}$, and
$\mathcal S_{>1}:=\{s\in\mathcal S:|\mathcal A_s^*|>1\}$ ($\calA^*_s$ denotes the optimal action set of $s$, see~\eqref{eq:optimal-action-set} for the formal definition).
}
\label{tab:summary}
\vspace{0.2cm}

\begingroup
\scriptsize
\setlength{\tabcolsep}{2pt}
\renewcommand{\arraystretch}{1.45}
\renewcommand{\tabularxcolumn}[1]{m{#1}}

\begin{tabularx}{\textwidth}{
>{\raggedright\arraybackslash}m{0.18\textwidth}
>{\centering\arraybackslash}m{0.11\textwidth}
>{\centering\arraybackslash}m{0.08\textwidth}
>{\hsize=0.88\hsize\centering\arraybackslash}X
>{\hsize=0.88\hsize\centering\arraybackslash}X
>{\hsize=1.24\hsize\centering\arraybackslash}X
}
\midrule[0.12em]
\textbf{Mirror map}
& \textbf{MDP setting}
& \textbf{Finite-time}
& \textbf{Local upper bound of state value error}
& \textbf{Local lower bound of state value error}
& \textbf{Local policy convergence rate} \\
\midrule[0.12em]

\makecell[l]{\textbf{Case 1:}\\
\(\psi'(0^+)>-\infty,\ \psi'(1^-)<+\infty\)} 
& 
& Theorem~\ref{thm:case-1:finite-time}
& /
& /
& / \\
\midrule[0.08em]

\makecell[l]{Squared norm\\[-0.2em]
\(\psi(x)=\frac12x^2\)}
& /
& \cmark
& /
& /
& / \\
\midrule[0.015em]

\makecell[l]{Tsallis entropy, \(q>1\)\\[-0.2em]
\(\psi(x)=\frac{1}{q-1}|x|^q\)}
& /
& \cmark
& /
& /
& / \\

\midrule[0.12em]
\makecell[l]{\textbf{Case 2:}\\
\(\psi'(0^+)=-\infty,\ \psi'(1^-)<+\infty\)}
& 
& /
& \multicolumn{2}{c}{
Theorem~\ref{thm:case-2-local-convergence}
} 
& Theorem~\ref{thm:case-2-policy-convergence-rate} \\
\midrule[0.08em]

\makecell[l]{Shannon entropy\\[-0.2em]
\(\psi(x)=x\ln x-x\)}
& /
& \xmark
& \(O\!\left(e^{-\eta(\Delta-2\sigma)k}\right)\)
& \(\Omega\!\left(e^{-\eta(\Delta+2\sigma)k}\right)\)
& \(O\!\left(e^{-\eta(\Delta-2\sigma)k}\right)\) \\
\midrule[0.015em]

\multirow[c]{3}{*}{%
  \makecell[l]{Tsallis entropy, \(0<q<1\)\\
  \(\psi(x)=\frac{1}{q-1}x^q\)}
}
& \multirow[c]{3}{*}{/}
& \multirow[c]{3}{*}{\xmark}
& \multirow[c]{3}{*}{%
  \(O\!\left((\nicefrac{1}{k})^{\frac{1}{1-q}}\right)\)
}
& \multirow[c]{3}{*}{%
  \(\Omega\!\left((\nicefrac{1}{k})^{\frac{1}{1-q}}\right)\)
}
& \(\displaystyle s\in\mathcal S_{=1}:\;
O\!\left((\nicefrac{1}{k})^{\frac{1}{1-q}}\right)\)
\\[0.65em]
& & & & &
\(\displaystyle s\in\mathcal S_{>1}:\;
O\!\left((\nicefrac{1}{k})^{\frac{q}{1-q}}\right)\)
\\

\midrule[0.12em]
\makecell[l]{\textbf{Case 3}\\
\(\psi'(0^+)>-\infty,\ \psi'(1^-)=+\infty\)}
&
& Theorem~\ref{thm:case-3-finite-time}
& \multicolumn{2}{c}{Theorem~\ref{thm:case-3-local-convergence}} 
& Theorem~\ref{thm:case-3-policy-convergence-rate} \\
\midrule[0.08em]

\makecell[l]{Hellinger\\[-0.2em]
\(\psi(x)=-\sqrt{1-x^2}\)}
& \(\mathcal S=\mathcal S_{>1}\)
& \cmark
& /
& /
& / \\
\midrule[0.015em]

\multirow[c]{2}{*}{%
  \makecell[l]{Hellinger\\[-0.2em]
  \(\psi(x)=-\sqrt{1-x^2}\)}
}
& \multirow[c]{2}{*}{\(\mathcal S_{=1}\neq\emptyset\)}
& \multirow[c]{2}{*}{\xmark}
& \multirow[c]{2}{*}{%
  \(O\!\left(\nicefrac{1}{k^2}\right)\)
}
& \multirow[c]{2}{*}{%
  \(\Omega\!\left(\nicefrac{1}{k^2}\right)\)
}
& \(\displaystyle s\in\mathcal S_{=1}:\;
O\!\left(\nicefrac{1}{k^2}\right)\)
\\[-0.35em]
& & & & &
\(\displaystyle s\in\mathcal S_{>1}:\;
O\!\left(\nicefrac{1}{k}\right)\)
\\

\midrule[0.12em]
\makecell[l]{\textbf{Case 4}\\
\(\psi'(0^+)=-\infty,\ \psi'(1^-)=+\infty\)}
& 
& /
& \multicolumn{2}{c}{Theorem~\ref{thm:case-4-local-convergence}} 
& Theorem~\ref{thm:case-4-policy-convergence-rate} \\
\midrule[0.08em]

\makecell[l]{Fermi--Dirac entropy\\[-0.2em]
\(\psi(x)=x\ln x\)\\[-0.2em]
\(\qquad +(1-x)\ln(1-x)\)}
& /
& \xmark
& \(O\!\left(e^{-\frac12\eta(\Delta-2\sigma)k}\right)\)
& \(\Omega\!\left(e^{-\eta(\Delta+2\sigma)k}\right)\)
& \(O\!\left(e^{-\frac12\eta(\Delta-2\sigma)k}\right)\) \\

\midrule[0.12em]
\end{tabularx}
\endgroup
\end{table}

Our results for the four categories of mirror maps are summarized in Table~\ref{tab:summary}. To the best of our knowledge, this is the first work to provide a systematic and unified policy convergence analysis across all four categories of Legendre functions. We develop a new analytical framework to establish policy convergence for unregularized PMD with a constant step size, which differs substantially from existing analysis. For example, the analysis for HPMD~\citep{Li_Zhao_Lan_2022} relies  on the regularization effect and requires carefully designed time-varying step sizes and regularization coefficients. However, these mechanisms are not available in the unregularized PMD setting with a constant step size. In contrast, we first establish a general policy convergence criterion for PMD  under a summable value error condition, i.e., $\sum_k \| V^* - V^{\pi_k} \|_\infty < \infty$. We then categorize Legendre functions into four distinct cases based on the differentiability at $0$ and $1$, and develop tailored local convergence analyses by tracking dual-space policy gaps and examining the boundary behavior  of $\psi^\prime$. For Case 1, finite-time termination is established by leveraging the boundedness of $\psi'$. 
For Cases 2 and 4, we derive local upper and lower bounds on the value error and show that the local upper bounds imply
\(
    \sum_{k\ge 0} \|V^* - V^{\pi_k}\|_\infty < \infty,
\)
which then yields policy convergence through the summability criterion. 
For Case 3, we split the analysis according to whether the MDP contains any state with a unique optimal action, and then use distinct arguments for the corresponding scenarios.

\subsection{Organization of this paper}

The rest of this paper is organized as follows. Section~\ref{sec:preliminary} introduces the notation, MDP setting, and preliminary results on policy mirror descent. Section~\ref{sec:main-theorem} establishes our main policy convergence theory for unregularized PMD  with arbitrary constant step sizes. In particular, we divide the analysis into four cases according to the differentiability of the mirror map at   $0$ and $1$, and establish the corresponding finite-time termination, asymptotic convergence, and local convergence rate results. Moreover,  numerical experiments are provided in Section~\ref{sec:numerics} to verify the theoretical findings. Section~\ref{sec:proofs} contains the proofs of the main results, while the proofs of the technical lemmas are presented in Section~\ref{sec:proofs-of-lemmas}.
\section{Notations, settings, and preliminaries}
\label{sec:preliminary}
\subsection{Preliminaries for MDP}
For a discrete set $\mathcal{X}$, we use $|\mathcal{X}|$ to denote its cardinality. In this paper, we focus on finite MDPs, i.e., $|\calS| < \infty$ and $|\calA| < \infty$. For any policy $\pi$, recall  $V^\pi \in \mathbb{R}^{|\calS|}$ denotes the state value  and $Q^{\pi} \in \mathbb{R}^{|\calS||\calA|}$ denotes the action value. For ease of presentation, we assume the rewards are bounded in $[0,1]$. Under this assumption, the value functions are also non-negative and upper bounded,
\begin{align*}
    \forall\, s\in\calS, \; \forall \, a\in\calA: \quad V^\pi(s) \in \Big[ 0, \; \frac{1}{1-\gamma} \Big], \quad Q^{\pi}(s,a) \in \Big[ 0, \; \frac{1}{1-\gamma} \Big].
\end{align*}
One can verify that $V^\pi$ and $Q^\pi$ are the unique solutions to the following Bellman equations,
\begin{alignat*}{2}
    &\forall\, s\in\calS: \quad\, &&V^\pi(s) = \E_{a\sim\pi(\cdot|s) , \; s^\prime\sim P(\cdot|s,a)} [r(s,a) + \gamma \, V^{\pi}(s^\prime)], \numberthis \label{eq:V-Bellman-eq} \\
    &\forall\, s\in\calS, \; \forall\, a\in\calA: \quad &&Q^\pi(s,a) = \E_{s^\prime\sim P(\cdot|s,a), \; a^\prime\sim \pi(\cdot|s^\prime)} [r(s,a) + \gamma \, Q^{\pi}(s^\prime, a^\prime)].
\end{alignat*}
It is clear that $V^\pi(s) = \E_{a\sim\pi(\cdot|s)}[Q^\pi(s,a)]$. Now recall that $\Pi = \{ \pi \, | \, \pi(\cdot|s) \in \Delta(\calA), \; \forall\, s \}$ denotes the set of all admissible policies. The optimal state value functions and optimal action value functions are defined as
\begin{align*}
    \forall\, s\in\calS, \; \forall \, a\in\calA: \quad V^*(s) := \max_{\pi \in \Pi} \; V^\pi(s), \;\; Q^*(s,a) := \max_{\pi\in\Pi} \; Q^\pi(s,a).
\end{align*}
It has been shown that there exists at least one deterministic policy $\pi^*$ such that $V^{\pi^*} = V^*$ and $Q^{\pi^*} = Q^*$, see for example~\citet{suttonRL}. Furthermore, the optimal value functions are the unique solutions to the following optimal Bellman equations,
\begin{alignat*}{2}
    &\forall \, s\in\calS: \quad &&V^*(s) = \max\nolimits_{a \in \calA} \, \E_{s^\prime \sim P(\cdot|s,a)} \, [r(s,a) + \gamma \, V^*(s^\prime)], \\
    &\forall \, s\in\calS, \; \forall\, a\in\calA: \quad &&Q^*(s,a) = \E_{s^\prime \sim P(\cdot|s,a)} [\max\nolimits_{a^\prime \in \calA} [r(s,a) + \gamma \, Q^*(s^\prime, a^\prime)]],
\end{alignat*}
and thus there holds $V^*(s) = \max_{a\in\calA} Q^*(s,a)$. Define the advantage function by
\begin{align*}
    \forall\, \pi \in \Pi, \; \forall\, s\in\calS, \; \forall \, a\in\calA: \quad A^\pi(s,a) := Q^\pi(s,a) - V^\pi(s),
\end{align*}
and similarly define the optimal advantage function by $A^*(s,a) := Q^*(s,a) - V^*(s)$. The optimal Bellman equations imply that $A^*(s,a) \leq 0$ for all $(s,a) \in \calS \times \calA$. It is direct to verify that
\begin{align}  \label{eq:Q-V-A-bounded}
    \forall\, \pi \in \Pi: \quad \big\| Q^* - Q^\pi \big\|_\infty \leq \gamma \, \big\| V^* - V^\pi \big\|_\infty, \;\; \big\| A^* - A^\pi \big\|_\infty \leq \big\| V^* - V^\pi \big\|_\infty.
\end{align}

For each state define the optimal action set
\begin{equation}  \label{eq:optimal-action-set}
\begin{aligned}
    \forall\, s\in\calS: \quad \calA^*_s &:= \big\{ a\in\calA: \;\; Q^*(s,a) = V^*(s) = \max\nolimits_{a^\prime\in\calA} \, Q^*(s,a^\prime) \big\} \\
    &\phantom{:}= \big\{ a\in\calA: \;\; A^*(s,a) = 0 \big\}.
\end{aligned}
\end{equation}
Let $\Pi^*$ denote the set of all optimal policies. It is shown 
\begin{align}  \label{eq:optimal-policies}
    \Pi^* = \big \{ \pi \in \Pi  \, \big | \, \pi(a^\prime|s) = 0, \; \forall\, s\in\calS, \; \forall \, a^\prime \not \in \calA^*_s \big\}.
\end{align}
That is, the support of any optimal policy must lie in the optimal action sets. Therefore, when $|\calA^*_s| > 1$ for some state $s\in\calS$, the optimal policy set $\Pi^*$ contains infinitely many optimal policies, and thus the value convergence $V^{\pi_k} \to V^*$ does not imply the convergence in the policy domain. We further divide the state set $\calS$ into the states with a unique optimal action and those with multiple optimal actions:
\begin{align*}
    \calS_{=1} := \{ s\in\calS: \; \; |\calA^*_s| = 1 \}, \quad \calS_{>1} := \{ s\in\calS: \; \; |\calA^*_s| > 1 \}.
\end{align*}

Define the set of non-trivial states, i.e., states at which not all actions are optimal, by \[\tilde \calS := \{ s\in\calS: \; \calA^*_s \neq \calA \}.\] Throughout this paper, we assume $\tilde \calS \neq \emptyset$. Otherwise, every action is optimal at every state, and hence every admissible policy is optimal. The suboptimality gap is then defined as
\begin{align*}
    \Delta := \min_{s\in\tilde\calS} \, \Delta_s, \quad \mbox{where } \; \Delta_s := V^*(s) - \max_{a^\prime \not \in \calA^*_s} \, Q^*(s,a^\prime).
\end{align*}
The suboptimality gap serves as a problem-dependent measure of how difficult it is to distinguish optimal actions from suboptimal ones, and is closely tied to the tail convergence behavior of PMD~\citep{pg-liu,ppgliu,Li_Zhao_Lan_2022,Khodadadian_Jhunjhunwala_Varma_Maguluri_2021}. Another key metric in our analysis is the suboptimal probabilities,
\begin{align*}
    \forall\, \pi \in \Pi, \; \forall\, s\in\calS: \quad b^{\pi}_s := \sum_{a^\prime \not \in \calA^*_s} \pi(a^\prime | s) = 1 - \sum_{a^* \in \calA^*_s} \pi(a^*|s).
\end{align*}
\begin{remark}
    In~\citet{Li_Zhao_Lan_2022}, the authors use $\dist_{\ell_1}(\pi, \, \Pi^*) := \max_{s\in\calS} \, \inf_{\pi^*\in\Pi^*} \, \big\| \pi(\cdot|s) - \pi^*(\cdot|s) \big\|_1$ to measure the distance between $\pi$ and the optimal policy set $\Pi^*$. It can be verified that
    \begin{align*}
        \dist_{\ell_1}(\pi, \, \Pi^*) = 2\max_{s\in\calS} \, b^\pi_s.
    \end{align*}
\end{remark}
\noindent By~\eqref{eq:optimal-policies}, a policy $\pi$ is optimal if and only if $b^\pi_s = 0$ for all $s\in\calS$. The following lemma plays an important role in our analysis.
\begin{lemma}[\protect{\citet[Lemma~2.4]{pg-liu}}]  \label{lem:sub-optimal-probabilities}
    For any $s \in \calS$,
    \begin{align*}
        \Delta \cdot b^\pi_s \leq V^*(s) -V^\pi(s) \leq \frac{1}{(1-\gamma)^2} \cdot \E_{s^\prime\sim d^\pi_{s}}[b^\pi_{s^\prime}],
    \end{align*}
    where $d^\pi_{s}(s^\prime) := (1-\gamma) \sum_{t=0}^\infty \gamma^t \P[s_t = s^\prime\, | \, s_0 = s ,\, \pi]$ is the visitation measure.
\end{lemma}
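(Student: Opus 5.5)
The plan is to prove the two inequalities separately. Both rest on the performance difference lemma, which is a direct consequence of the Bellman equations~\eqref{eq:V-Bellman-eq}. For any two policies $\pi,\pi'$ and any state $s$ it reads
\begin{align*}
    V^{\pi'}(s) - V^{\pi}(s) = \frac{1}{1-\gamma}\,\E_{s'\sim d^{\pi'}_s}\Big[\sum_{a}\pi'(a|s')\,A^{\pi}(s',a)\Big].
\end{align*}
For each inequality we pick the roles of $\pi$ and $\pi'$ so that either the optimal advantage $A^*$ or the advantage $A^\pi$ of the current policy appears.

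\textbf{Lower bound.} Take $\pi'=\pi$ and use $\pi^*$ as the reference policy. This gives
\begin{align*}
    V^*(s)-V^\pi(s) = \frac{1}{1-\gamma}\,\E_{s'\sim d^\pi_s}\Big[\sum_a \pi(a|s')\,\big(-A^*(s',a)\big)\Big].
\end{align*}
We then bound the integrand pointwise. Since $A^*\le 0$ everywhere, each term $\pi(a|s')(-A^*(s',a))$ is nonnegative. For $a\notin\calA^*_{s'}$ we have $-A^*(s',a)\ge\Delta_{s'}\ge\Delta$; this requires $s'\in\tilde\calS$, and if $s'\notin\tilde\calS$ then $b^\pi_{s'}=0$ anyway. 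Hence the integrand is at least $\Delta\, b^\pi_{s'}$. The visitation measure satisfies $d^\pi_s(s)\ge 1-\gamma$ from the $t=0$ term, so we drop all states except $s'=s$ and obtain $V^*(s)-V^\pi(s)\ge \frac{1}{1-\gamma}(1-\gamma)\Delta\, b^\pi_s=\Delta\, b^\pi_s$.

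\textbf{Upper bound.} Take $\pi'=\pi^*$ with $\pi$ as the reference policy, and choose $\pi^*$ deterministic. This gives
\begin{align*}
    V^*(s)-V^\pi(s)=\frac{1}{1-\gamma}\E_{s'\sim d^{\pi^*}_s}\big[\max_a Q^\pi(s',a)-V^\pi(s')\big].
\end{align*}
The difficulty is that this expectation is taken under $d^{\pi^*}_s$, whereas the statement asks for one under $d^\pi_s$. So instead we use the other orientation, which stays under $d^\pi_s$ throughout. Write $V^*-V^\pi=(V^*-\mathcal{T}^\pi V^*)+(\mathcal{T}^\pi V^*-\mathcal{T}^\pi V^\pi)$, where $\mathcal T^\pi$ is the Bellman operator of $\pi$. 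Unrolling this recursion gives
\begin{align*}
    V^*(s)-V^\pi(s)=\frac{1}{1-\gamma}\E_{s'\sim d^\pi_s}\Big[\sum_a\pi(a|s')\big(-A^*(s',a)\big)\Big],
\end{align*}
which is exactly the identity already used for the lower bound. Now bound the integrand from above. The only nonzero terms are those with $a\notin\calA^*_{s'}$, and for these $-A^*(s',a)=V^*(s')-Q^*(s',a)\le V^*(s')\le \frac{1}{1-\gamma}$ because $Q^*\ge0$. So the integrand is at most $\frac{1}{1-\gamma}b^\pi_{s'}$, and therefore $V^*(s)-V^\pi(s)\le \frac{1}{(1-\gamma)^2}\E_{s'\sim d^\pi_s}[b^\pi_{s'}]$.

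The only real obstacle is the upper bound: one has to choose the orientation of the performance difference identity that keeps the expectation under $d^\pi_s$, rather than the more common version under $d^{\pi^*}_s$. Once the identity is written with $A^*$ and $d^\pi_s$, both inequalities follow by the pointwise bounds above.
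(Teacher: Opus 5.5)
Your proof is correct and matches the standard argument behind the cited result (the paper does not prove this lemma itself; it cites Lemma~2.4 of \citet{pg-liu}). That argument writes $V^*(s)-V^\pi(s)=\frac{1}{1-\gamma}\E_{s'\sim d^\pi_s}\big[\sum_a\pi(a|s')(-A^*(s',a))\big]$ via the performance difference lemma, bounds $-A^*(s',a)$ between $\Delta$ and $\frac{1}{1-\gamma}$ on suboptimal actions, and uses $d^\pi_s(s)\ge 1-\gamma$ for the lower bound. Your detour through the $\pi'=\pi^*$ orientation in the upper-bound paragraph is unnecessary, since the identity you already derived for the lower bound gives both inequalities directly.
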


\subsection{Policy mirror descent}
In this paper we consider the decomposable mirror map $h(p) = \sum_{a\in\calA} \psi(p(a))$ with $\psi$ satisfying Assumption~\ref{ass:psi-Legendre}. First note that any differentiable convex function is continuously differentiable~\citep{Rockafellar1970convex}. Therefore, Assumption~\ref{ass:psi-Legendre} implies that $\psi^\prime$ is continuous and strictly increasing over $\interior\, \dom \, \psi$.
Moreover, the associated Bregman divergence and domains decompose coordinate-wise as
\begin{align*}
    D_h (p \, \| \, q) = \sum_{a\in\calA} d_\psi(p(a) \, \| \, q(a)), \quad \interior \, \dom \, h = \big( \interior \, \dom \, \psi \big)^{|\calA|}, \quad \dom \, \nabla h = \big( \dom \, \psi^\prime \big)^{|\calA|},
\end{align*}
where
\begin{alignat*}{2}
    &\forall\, x \in \dom \, \psi, \; \forall\, y \in \dom \, \psi^\prime : \quad &&d_\psi(x \, \| \, y) := \psi(x) - \psi(y) - \psi^\prime(y) \cdot (x-y).
\end{alignat*}
Throughout this paper, we adopt the following shorthand notations for convenience:
\begin{align*}
    \forall\, p\in\Delta(\calA), \, \forall\, \pi, \, \tilde \pi \in \Pi: \quad D^p_{\pi}(s) := D_h(p \, \| \, \pi(\cdot|s)), \;\; D^\pi_{\tilde\pi}(s) := D_h(\pi(\cdot|s) \, \| \, \tilde\pi(\cdot|s)).
\end{align*}

Given a constant step size $\eta > 0$, a mirror map $h$, and an initial policy $\pi_0$ satisfying $\pi_0(a|s) \in \interior \, \dom \, \psi$ for all $(s,a) \in \calS\times\calA$, the unregularized PMD algorithm updates the policy as follows:
\begin{align}  \label{eq:PMD}
    \mbox{(PMD)} \quad \forall\, s\in\calS: \quad \pi_{k+1}(\cdot|s) = \underset{p\in\Delta(\calA)}{\arg\max} \; \Big\{ \eta \, \langle p, \; Q^{\pi_k}(s,\cdot) \rangle - D_h(p \, \| \, \pi_k(\cdot|s)) \Big\}.
\end{align}
The following lemma follows from the first-order optimality conditions of the maximization problem defining the PMD policy update~\eqref{eq:PMD}, together with the standard characterization of the normal cone to the probability simplex~\citep{Rockafellar1970convex,beck2017first}.
\begin{lemma}  \label{lem:KKT-cond}
    Let $\{\pi_k\}_{k \geq 0}$ be the policy sequence generated by PMD  with a constant step size $\eta>0$. If the initial policy satisfies $\pi_0(a|s) \in \interior\, \dom \, \psi$ for all $(s,a) \in \calS\times\calA$, then it holds that 
    \begin{align*}
        \forall\, k \geq 0, \; \forall\, (s,a) \in \calS\times\calA: \quad \pi_k(a|s) \in \interior\, \dom \, \psi.
    \end{align*}
    Furthermore,  there exist a scalar $\nu_{s} \in \mathbb{R}$ and a dual vector $\lambda_{s} \in \mathbb{R}^{|\calA|}$ such that
    \begin{align*}
        \forall \, a\in\calA: \quad\psi^\prime(\pi_{k+1}(a|s)) = \psi^\prime(\pi_k(a|s)) + \eta \, Q^{\pi_k}(s,a) + \lambda_{s}(a) + \nu_{s},
    \end{align*}
    where the multipliers satisfy $\lambda_{s}(a) \geq 0$ and the complementary slackness condition $\pi_{k+1}(a|s) \cdot \lambda_{s}(a) = 0$ for all $a\in\calA$.
\end{lemma}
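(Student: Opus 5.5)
We prove Lemma~\ref{lem:KKT-cond} by induction on $k$. The base case $k=0$ is the assumption on $\pi_0$. For the inductive step, fix $s$ and assume $\pi_k(a|s)\in\interior\,\dom\,\psi$ for all $a$. Then $\psi'(\pi_k(a|s))$ is finite and well defined. The objective $F(p):=\eta\langle p, Q^{\pi_k}(s,\cdot)\rangle - D_h(p\,\|\,\pi_k(\cdot|s))$ is, up to an affine term, equal to $-h(p)$. It is therefore strictly concave and upper semicontinuous on the compact set $\Delta(\calA)\subseteq\dom\,h$, using Assumption~\ref{ass:psi-Legendre}(a),(b),(d) and the fact that $\psi$ is closed. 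Hence the maximizer $\pi_{k+1}(\cdot|s)$ exists and is unique.

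The key step is to show that no coordinate of $\pi_{k+1}(\cdot|s)$ lies on $\mathrm{bd}\,\dom\,\psi$. Since $\dom\,\psi\supseteq[0,1]$ is an interval, the only possible boundary points inside $[0,1]$ are $0$ and $1$. We handle them by a perturbation argument that exploits the steepness in Assumption~\ref{ass:psi-Legendre}(c).

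\textbf{Coordinate equal to $0\in\mathrm{bd}\,\dom\,\psi$.} Suppose $p=\pi_{k+1}(\cdot|s)$ has $p(a_0)=0$ with $0\in\mathrm{bd}\,\dom\,\psi$. Then $\psi'(x)\to-\infty$ as $x\downarrow0$, since $\psi'$ is increasing and has infinite modulus there. Some coordinate $a_1$ has $p(a_1)\ge 1/|\calA|$. Move mass $t$ from $a_1$ to $a_0$. The directional derivative of $F$ along $e_{a_0}-e_{a_1}$ at $t\downarrow0$ contains $-\psi'(t)\to+\infty$, while the remaining terms are bounded. To see this, note that if $p(a_1)\in\interior\,\dom\,\psi$ then $\psi'$ is finite near $p(a_1)$. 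If instead $p(a_1)=1$ lies on the boundary, then $-\psi'$ at that end also works in our favor, so the contribution is again positive or bounded. Hence $F$ strictly increases, contradicting optimality.

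\textbf{Coordinate equal to $1\in\mathrm{bd}\,\dom\,\psi$.} This case is symmetric: every other coordinate is $0$, and moving mass $t$ away from that coordinate gains $+\psi'(1-t)\to+\infty$ in the derivative.

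To make these derivative claims rigorous, we use convexity to write $\psi(x+t)-\psi(x)\ge t\,\psi'(x)$ style bounds, together with the mean value theorem on the open intervals. This boundary exclusion is the main technical obstacle, in particular handling all the combinations where $0$ and/or $1$ are boundary points.

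Once every coordinate of $p=\pi_{k+1}(\cdot|s)$ lies in $\interior\,\dom\,\psi$, the function $F$ is differentiable at $p$. The constraints are the affine equality $\sum_a p(a)=1$ and the inequalities $p(a)\ge0$, so Slater's condition, or simply linearity of the constraints, gives the KKT conditions. The normal cone of $\Delta(\calA)$ at $p$ is $\{\nu\mathbf 1+\lambda:\lambda\le0,\ \lambda(a)p(a)=0\}$, up to sign conventions. Optimality of a concave function requires $\nabla F(p)\in -N_{\Delta}(p)$, i.e.
\begin{align*}
\eta\,Q^{\pi_k}(s,a)-\psi'(p(a))+\psi'(\pi_k(a|s)) + \lambda_s(a)+\nu_s=0,
\end{align*}
with $\lambda_s(a)\ge0$ and $\lambda_s(a)p(a)=0$, after absorbing signs into $\nu_s$. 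Rearranging gives the stated identity. Together with the interiority just shown, this completes the induction.
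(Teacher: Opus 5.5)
Your proof is correct, and it argues the interiority claim differently from the paper. The paper gives no separate proof of this lemma. It appeals to the first-order optimality conditions of the PMD subproblem and the standard normal cone of the simplex, with interiority of the iterates implicitly supplied by the Legendre property of $h$. In that standard argument, the subdifferential sum rule gives $0\in\partial(-F)(p)+N_{\Delta(\calA)}(p)$, which is valid because the uniform distribution lies in $\interior\,\dom\,h\cap\mathrm{ri}\,\Delta(\calA)$. This forces $\partial h(p)\neq\emptyset$, and an essentially smooth $h$ has empty subdifferential outside $\interior\,\dom\,h$.

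You instead exclude boundary coordinates by hand, using a mass-transfer perturbation together with the steepness condition (c). Only then do you apply ordinary KKT for a differentiable objective under linear constraints. What each route buys:
\begin{itemize}
\item Yours is elementary and self-contained and uses the decomposable structure explicitly. It is essentially the same perturbation device the paper itself uses in the proof of part (c) of Lemma~\ref{lem:gap-metric-increase}.
\item The cited route is shorter and covers non-decomposable Legendre $h$, but it relies on convex-analysis machinery.
\end{itemize}

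Two small remarks on your write-up.

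First, the derivative estimates need neither endpoint continuity of $\psi$ nor a case split. The subgradient inequality at the interior point $t$ gives $\psi(0)\ge\psi(t)-t\,\psi'(t)$, hence $\bigl(\psi(t)-\psi(0)\bigr)/t\le\psi'(t)\to-\infty$. Likewise, $-\bigl(\psi(p(a_1)-t)-\psi(p(a_1))\bigr)/t\ge\psi'(p(a_1)-t)\ge\psi'(p(a_1)/2)$ for $t\le p(a_1)/2$. So the donor coordinate's contribution is uniformly bounded below whether or not $p(a_1)=1$ is a boundary point. The same two inequalities handle the coordinate-equal-to-$1$ case.

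Second, the sign in your KKT step is off. For maximizing a concave $F$ over $\Delta(\calA)$, the condition is $\nabla F(p)\in N_{\Delta(\calA)}(p)=\{\nu\mathbf{1}+\lambda:\ \lambda\le 0,\ \lambda(a)p(a)=0\}$, not $\nabla F(p)\in -N_{\Delta(\calA)}(p)$. The sign of the multiplier vector cannot be absorbed into $\nu_s$, since it is what makes $\lambda_s(a)\ge 0$. Your final displayed identity is nevertheless the correct one.
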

\noindent Note that when $0 \not\in \interior \, \dom \, \psi$ (e.g., the negative Shannon entropy), Lemma~\ref{lem:KKT-cond} implies that 
\begin{align}  \label{eq:KKT-cond-0-not-diff}
    \forall\, k, \; \forall\, (s,a) \in \calS\times\calA: \quad \pi_k(a|s) > 0 \;\; \mbox{and } \; \psi^\prime(\pi_{k+1}(a|s)) = \psi^\prime(\pi_k(a|s)) + \eta \, Q^{\pi_k}(s,a) + \nu_{s}.
\end{align}
Combining Lemma~\ref{lem:KKT-cond} with the three-point identity of Bregman divergences, we obtain the following three-point descent lemma~\citep{Lan_2021,Xiao_2022}.
\begin{lemma}[\protect{\citet[Lemma~6]{Xiao_2022}}]  \label{lem:three-point-descent}
    Let $\{\pi_k\}_{k \geq 0}$ be the policy sequence generated by PMD. Then
    \begin{align*}
        \forall\, p \in \Delta(\calA), \; \forall\, s\in\calS: \quad \eta \, \langle \pi_{k+1}(\cdot|s) - p, \; Q^{\pi_k}(s,\cdot) \rangle \geq D^{\pi_{k+1}}_{\pi_k}(s) + D^p_{\pi_{k+1}}(s) - D^p_{\pi_k}(s).
    \end{align*}
\end{lemma}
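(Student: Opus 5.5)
The statement is the three-point descent inequality of Lemma~\ref{lem:three-point-descent}. I would prove it state by state from the optimality conditions of Lemma~\ref{lem:KKT-cond} combined with the three-point identity for Bregman divergences. Fix $s\in\calS$ and $p\in\Delta(\calA)$, and write $x=\pi_{k+1}(\cdot|s)$ and $y=\pi_k(\cdot|s)$. By Lemma~\ref{lem:KKT-cond}, both $x$ and $y$ have all coordinates in $\interior\,\dom\,\psi$, so $\nabla h(x)$ and $\nabla h(y)$ exist. Hence $D_h(p\,\|\,x)$ and $D_h(p\,\|\,y)$ are well defined, and $D_h(p\,\|\,y)$ may be $+\infty$ only if $p\notin\dom h$. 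That cannot happen, since $\Delta(\calA)\subseteq[0,1]^{|\calA|}\subseteq\dom h$ by Assumption~\ref{ass:psi-Legendre}\ref{ass:psi-Legendre:dom}.

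\textbf{Step 1 (three-point identity).} Expanding the definitions gives, by direct cancellation of the $h$-terms,
\begin{align*}
D_h(p\,\|\,y) - D_h(p\,\|\,x) - D_h(x\,\|\,y) = \inner{\nabla h(x)-\nabla h(y)}{p-x}.
\end{align*}

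\textbf{Step 2 (plug in optimality).} By Lemma~\ref{lem:KKT-cond}, coordinatewise
\begin{align*}
\nabla h(x)-\nabla h(y) = \eta\,Q^{\pi_k}(s,\cdot) + \lambda_s + \nu_s\mathbf{1}.
\end{align*}
Take the inner product with $p-x$. The $\nu_s$ term vanishes because $\inner{\mathbf 1}{p-x}=1-1=0$. For the $\lambda_s$ term, complementary slackness gives $\inner{\lambda_s}{x}=0$, while $\inner{\lambda_s}{p}\ge0$ since $\lambda_s\ge0$ and $p\ge0$. Therefore
\begin{align*}
\inner{\nabla h(x)-\nabla h(y)}{p-x} \ge \eta\inner{Q^{\pi_k}(s,\cdot)}{p-x}.
\end{align*}

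\textbf{Step 3 (combine).} Substituting into Step 1 gives
\begin{align*}
D_h(p\,\|\,y) - D_h(p\,\|\,x) - D_h(x\,\|\,y) \ge \eta\inner{p-x}{Q^{\pi_k}(s,\cdot)}.
\end{align*}
This does not yet have the stated sign. The direction of Step 2 has to be rechecked, because the $\lambda_s$ term contributes to the wrong side.

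The resolution is that the multiplier convention must match the maximization. Since $p\mapsto\eta\inner{p}{Q}-D_h(p\,\|\,y)$ is concave, the first-order condition at the maximizer $x$ reads $\eta Q - (\nabla h(x)-\nabla h(y)) \in N_\Delta(x)$. For the simplex, the normal cone consists of vectors $g$ with $g(a)=c$ on the support of $x$ and $g(a)\le c$ off it. This yields $\nabla h(x)-\nabla h(y)=\eta Q+\lambda_s+\nu_s\mathbf{1}$ with $\lambda_s\ge0$ and $\lambda_s(a)x(a)=0$, which is the sign stated in Lemma~\ref{lem:KKT-cond}.

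Equivalently, the variational inequality for the maximizer is
\begin{align*}
\inner{\eta Q - (\nabla h(x)-\nabla h(y))}{p-x}\le 0 \quad \text{for all } p\in\Delta(\calA),
\end{align*}
that is, $\inner{\nabla h(x)-\nabla h(y)}{p-x} \ge \eta\inner{Q}{p-x}$, which agrees with Step 2. Combining with Step 1 then gives
\begin{align*}
D_h(p\,\|\,y)-D_h(p\,\|\,x)-D_h(x\,\|\,y)\ge \eta\inner{Q}{p-x}.
\end{align*}
This is the opposite of the lemma as printed. The version that holds for maximization is
\begin{align*}
\eta\inner{p-\pi_{k+1}}{Q^{\pi_k}} \le D^p_{\pi_k} - D^p_{\pi_{k+1}} - D^{\pi_{k+1}}_{\pi_k},
\end{align*}
equivalently
\begin{align*}
\eta\inner{\pi_{k+1}-p}{Q^{\pi_k}} \ge D^{\pi_{k+1}}_{\pi_k}+D^p_{\pi_{k+1}}-D^p_{\pi_k}.
\end{align*}
Multiplying the first form by $-1$ gives exactly the second form, which is the statement. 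So the argument is consistent, and the only sign slip was in my Step 3 bookkeeping.

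\textbf{Main obstacle.} The delicate point is justifying the first-order optimality condition when $\psi'$ blows up at the boundary. This is handled by Lemma~\ref{lem:KKT-cond}, which guarantees an interior solution in the sense of $\interior\,\dom\,\psi$. A second point is ensuring that $D_h(p\,\|\,x)$ is finite for boundary $p$, which follows from $[0,1]\subseteq\dom\psi$.
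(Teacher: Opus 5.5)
Your argument is correct and follows the route the paper indicates: the paper cites \citet{Xiao_2022} and obtains the lemma by combining the KKT conditions of Lemma~\ref{lem:KKT-cond} with the three-point identity, exactly as in your Steps~1--2. One correction: the inequality at the end of your Step~3 already \emph{is} the statement once both sides are negated, so your remarks that it ``does not yet have the stated sign'' and is ``the opposite of the lemma as printed'' are mistaken, and that whole detour should be deleted.
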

\noindent Prior work~\citep{Lan_2021,Xiao_2022} has established that PMD enjoys a dimension-free $O(1/k)$ global sublinear convergence rate for the value error.
\begin{lemma}[\protect{\citet[Lemma~7, Theorem~8]{Xiao_2022}}] \label{lem:Xiao-sublinear}
    Let $\{ \pi_k \}_{k\geq 0}$ and $\{ V^{\pi_k} \}_{k\geq 0}$ be the policy and value sequence generated by PMD  with $\pi_0(\cdot|s) \in \big(\interior\, \dom \, h\big) \cap \Delta(\calA)$ for all $s\in\calS$, respectively. Then there hold
    \begin{alignat*}{2}  
        &\forall\, s\in\calS: \quad &&V^{\pi_{k+1}}(s) \geq V^{\pi_k}(s), \numberthis \label{eq:PMD-monotonicity-V} \\
        &\forall\, s\in\calS, \; \forall \, a\in\calA: \quad && Q^{\pi_{k+1}}(s,a) \geq Q^{\pi_k}(s,a). \numberthis \label{eq:PMD-monotonicity-Q}
    \end{alignat*}
    Furthermore, for arbitrary optimal policy $\pi^* \in \Pi^*$, it holds that 
    \begin{align*}
        \big\| V^* - V^{\pi_k} \big\|_\infty \leq \frac{1}{k+1} \Bigg( \frac{\| D^{\pi^*}_{\pi_0} \|_\infty}{\eta(1-\gamma)} + \frac{1}{(1-\gamma)^2} \Bigg),
    \end{align*}
    where $\| D^{\pi^*}_{\pi_0} \|_\infty = \max_{s\in\calS} \, D^{\pi^*}_{\pi_0}(s) < \infty$.
\end{lemma}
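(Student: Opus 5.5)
The statement to prove is Lemma~\ref{lem:Xiao-sublinear}: monotonicity \eqref{eq:PMD-monotonicity-V}--\eqref{eq:PMD-monotonicity-Q} and the $O(1/k)$ bound. I would follow the standard mirror-descent argument, built on the three-point descent inequality of Lemma~\ref{lem:three-point-descent} and the performance difference lemma,
\[
V^{\pi'}(s)-V^{\pi}(s)=\frac{1}{1-\gamma}\E_{s'\sim d^{\pi'}_s}\big[\langle \pi'(\cdot|s')-\pi(\cdot|s'),\,Q^{\pi}(s',\cdot)\rangle\big].
\]

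\emph{Monotonicity.} Apply Lemma~\ref{lem:three-point-descent} with $p=\pi_k(\cdot|s)$. Since $D^{\pi_k}_{\pi_k}(s)=0$ and the two remaining Bregman terms are nonnegative, we get $\langle \pi_{k+1}(\cdot|s)-\pi_k(\cdot|s),\,Q^{\pi_k}(s,\cdot)\rangle\ge \eta^{-1}\big(D^{\pi_{k+1}}_{\pi_k}(s)+D^{\pi_k}_{\pi_{k+1}}(s)\big)\ge 0$ for every $s$. Inserting this into the performance difference lemma with $\pi'=\pi_{k+1}$, $\pi=\pi_k$ gives \eqref{eq:PMD-monotonicity-V}. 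Then \eqref{eq:PMD-monotonicity-Q} follows from $Q^\pi(s,a)=r(s,a)+\gamma\E_{s'}V^\pi(s')$. Lemma~\ref{lem:KKT-cond} guarantees all iterates stay in $\interior\,\dom\,\psi$, so every Bregman term is well defined.

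\emph{Rate.} Fix $\pi^*\in\Pi^*$ and apply Lemma~\ref{lem:three-point-descent} with $p=\pi^*(\cdot|s)$, dropping $D^{\pi_{k+1}}_{\pi_k}\ge0$:
\[
\eta\langle \pi^*(\cdot|s)-\pi_k(\cdot|s),Q^{\pi_k}(s,\cdot)\rangle \le \eta\langle \pi_{k+1}(\cdot|s)-\pi_k(\cdot|s),Q^{\pi_k}(s,\cdot)\rangle + D^{\pi^*}_{\pi_k}(s)-D^{\pi^*}_{\pi_{k+1}}(s).
\]
Take the expectation over $s'\sim d^{\pi^*}_\rho$, where $\rho$ is the point mass at the state of interest. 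The left side is then $\eta(1-\gamma)(V^*-V^{\pi_k})$ by the performance difference lemma.

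The main obstacle is the term $\E_{d^{\pi^*}}\langle\pi_{k+1}-\pi_k,Q^{\pi_k}\rangle$, which is taken under the wrong distribution. The step above shows this inner product is pointwise nonnegative. Since $d^{\pi_{k+1}}_{s}(s)\ge 1-\gamma$, the performance difference lemma applied at each starting state $s'$ gives $\langle\pi_{k+1}(\cdot|s')-\pi_k(\cdot|s'),Q^{\pi_k}(s',\cdot)\rangle\le V^{\pi_{k+1}}(s')-V^{\pi_k}(s')$. Therefore
\[
(1-\gamma)(V^*(\rho)-V^{\pi_k}(\rho))\le \E_{s'\sim d^{\pi^*}_\rho}\big[V^{\pi_{k+1}}(s')-V^{\pi_k}(s')\big]+\eta^{-1}\E_{d^{\pi^*}_\rho}\big[D^{\pi^*}_{\pi_k}-D^{\pi^*}_{\pi_{k+1}}\big].
\]
Sum over $k=0,\dots,K$; both right-hand terms telescope. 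The value term is bounded by $\E[V^{\pi_{K+1}}-V^{\pi_0}]\le 1/(1-\gamma)$, and the divergence term by $\eta^{-1}\|D^{\pi^*}_{\pi_0}\|_\infty$. By monotonicity the left-hand sum is at least $(K+1)(1-\gamma)(V^*(\rho)-V^{\pi_K}(\rho))$. Dividing by $(K+1)(1-\gamma)$ and maximizing over $\rho$ gives the stated bound.

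Finiteness of $\|D^{\pi^*}_{\pi_0}\|_\infty$ holds because $\pi^*(a|s)\in[0,1]\subseteq\dom\,\psi$ and $\pi_0(a|s)\in\interior\,\dom\,\psi$, where $\psi'$ is finite.
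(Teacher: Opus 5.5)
Your proof is correct and follows essentially the same argument as \citet[Lemma~7, Theorem~8]{Xiao_2022}, which the paper cites rather than reproves. In both, the three-point lemma with $p=\pi_k(\cdot|s)$ gives pointwise nonnegativity of $\langle \pi_{k+1}(\cdot|s)-\pi_k(\cdot|s),\,Q^{\pi_k}(s,\cdot)\rangle$, and the performance difference lemma with $d^{\pi_{k+1}}_s(s)\ge 1-\gamma$ turns the cross term into a value increment that telescopes under the fixed distribution $d^{\pi^*}_\rho$; monotonicity then gives the $1/(K+1)$ factor, and the only ingredient not stated in the paper is the standard performance difference lemma, which Xiao's proof also relies on.
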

\noindent Lemma~\ref{lem:Xiao-sublinear} immediately implies that $V^{\pi_k} \to V^*$ for PMD if $\pi_0(\cdot|s) \in \big(\interior\, \dom \, h\big) \cap \Delta(\calA)$. Furthermore, it also yields that $b^{\pi_k}_s \to 0$ for all $s\in\calS$ together with Lemma~\ref{lem:sub-optimal-probabilities}.

\section{Policy convergence for unregularized PMD}
\label{sec:main-theorem}
We now turn to our main policy convergence results. Recall that we work with decomposable mirror maps of the form 
$h(p)=\sum_{a\in\calA}\psi(p(a))$, where $\psi$ satisfies 
Assumption~\ref{ass:psi-Legendre}. Moreover, we assume $\pi_0(a|s) \in \interior\, \dom \, \psi$ for all $(s,a) \in \calS\times\calA$.
The first result establishes a general criterion: PMD converges in the policy domain whenever the value errors are summable, i.e., $\sum_{k\geq 0} \| V^* - V^{\pi_k} \|_\infty < \infty$.
\begin{lemma}  \label{lem:summable-error-gives-policy-convergence}
    Let $\{\pi_k\}_{k\geq 0}$ and $\{ V^{\pi_k} \}_{k\geq 0}$ be the policy and  value sequences generated by PMD with a constant step size $\eta>0$. If $\sum_{k \geq 0} \| V^* - V^{\pi_k} \|_\infty < \infty$, then it converges in the policy domain, i.e.,
    \begin{align*}
        \pi_\infty := \lim_{k\to\infty} \, \pi_k \;  \mbox{exists, } \; \mbox{and } \; \pi_\infty \in \Pi^*. 
    \end{align*}
    Furthermore,
    \begin{alignat*}{2}
        &\forall\, s \in \calS, \; \forall\, a^\prime \not\in\calA^*_s: \quad && \pi_\infty(a^\prime | s) = 0; \\
        &\forall \, s\in \calS_{=1}, \; \forall \, a^* \in \calA^*_s: \quad && \pi_\infty(a^*|s) = 1; \numberthis \label{eq:policy-convergence-to-interior} \\
        &\forall\, s\in\calS_{>1}, \; \forall\, a^*\in\calA^*_s: \quad && \pi_\infty(a^*|s) \in \interior \, \dom \, \psi.
    \end{alignat*}
\end{lemma}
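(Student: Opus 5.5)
The plan is to work in the dual (mirror) coordinates $\psi^\prime(\pi_k(a|s))$ given by Lemma~\ref{lem:KKT-cond}, and to show that the summability of $e_k := \|V^*-V^{\pi_k}\|_\infty$ makes the relevant dual quantities Cauchy. First, the suboptimal part is easy. By Lemma~\ref{lem:Xiao-sublinear} and Lemma~\ref{lem:sub-optimal-probabilities}, $\Delta\, b^{\pi_k}_s \le e_k \to 0$, so $\pi_k(a^\prime|s)\to 0$ for every $a^\prime\notin\calA^*_s$. For $s\in\calS_{=1}$ this already gives $\pi_k(a^*|s)=1-b^{\pi_k}_s\to 1$. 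Hence the only real content is the convergence of $\pi_k(\cdot|s)$ restricted to $\calA^*_s$ for $s\in\calS_{>1}$, together with the claim that the limit lies in $\interior\,\dom\,\psi$. Once both are established, $\pi_\infty\in\Pi^*$ follows from \eqref{eq:optimal-policies}.

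Fix $s\in\calS_{>1}$ and two optimal actions $a,b\in\calA^*_s$. Subtracting the two KKT identities of Lemma~\ref{lem:KKT-cond} eliminates $\nu_s$:
\begin{align*}
\psi^\prime(\pi_{k+1}(a|s))-\psi^\prime(\pi_{k+1}(b|s)) &= \psi^\prime(\pi_k(a|s))-\psi^\prime(\pi_k(b|s)) \\
&\quad + \eta\big[(Q^{\pi_k}-Q^*)(s,a)-(Q^{\pi_k}-Q^*)(s,b)\big] + \lambda_s(a)-\lambda_s(b).
\end{align*}
Here I use $Q^*(s,a)=Q^*(s,b)=V^*(s)$. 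By \eqref{eq:Q-V-A-bounded}, the bracketed term is at most $2\gamma e_k$ in absolute value, which is summable. Therefore, as long as the multipliers vanish on $\calA^*_s$, the dual differences $D^{ab}_k$ form a Cauchy sequence and converge to finite limits $c_{ab}$.

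Given these limits, I recover the primal limit through a continuity argument. Fix a reference action $a_0\in\calA^*_s$. The map $t\mapsto \sum_{a\in\calA^*_s}(\psi^\prime)^{-1}(t+c_{a a_0})$ is continuous and strictly increasing, since $\psi^\prime$ is continuous and strictly increasing on $\interior\,\dom\,\psi$. Because $\sum_{a\in\calA^*_s}\pi_k(a|s)=1-b^{\pi_k}_s\to 1$, the common level $t_k=\psi^\prime(\pi_k(a_0|s))$ must converge to the unique $t_\infty$ at which this sum equals $1$. Consequently each $\pi_k(a|s)$ converges to $(\psi^\prime)^{-1}(t_\infty+c_{aa_0})\in\interior\,\dom\,\psi$, which proves the third line of \eqref{eq:policy-convergence-to-interior}.

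The main obstacle is controlling the boundary behaviour of the optimal coordinates. There are two issues.
\begin{enumerate}[label=(\roman*)]
\item When $0\in\interior\,\dom\,\psi$, an optimal coordinate may hit $0$, and then $\lambda_s\neq 0$.
\item One must rule out $\pi_k(a|s)$ drifting to $\mathrm{bd}\,\dom\,\psi$, where $\psi^\prime$ may blow up.
\end{enumerate}
My first attempt is to track the spread $M_k:=\max_{a\in\calA^*_s}\psi^\prime(\pi_k(a|s))-\min_{a\in\calA^*_s}\psi^\prime(\pi_k(a|s))$. A positive $\lambda_s(b)$ only raises the minimal coordinate, so the clipping at $0$ can only shrink the spread. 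This should give $M_{k+1}\le M_k+2\eta\gamma e_k$, and hence $\sup_k M_k<\infty$.

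Since the optimal mass tends to $1$ and $|\calA^*_s|\ge 2$, some coordinate lies in $[1/|\calA|,\,1-1/|\calA|]$ up to $o(1)$. The bounded spread then confines all optimal dual coordinates to a compact interval of finite $\psi^\prime$-values, which rules out escape to a boundary where $|\psi^\prime|\to\infty$. For the residual case where $\psi^\prime(0)$ is finite, I would additionally use the shift $\nu_s+\eta V^*(s)$: it must be asymptotically nonnegative up to $O(e_k)$, because the optimal mass increases toward $1$. From this I would argue that once the total suboptimal mass is small, an optimal coordinate at $0$ is pushed strictly upward. Hence $\lambda_s$ vanishes on $\calA^*_s$ for all large $k$, and the Cauchy argument above applies from that time on.
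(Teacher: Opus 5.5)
\textbf{Gap: the optimal-action multipliers need not vanish.} Your dual-coordinate route is a legitimate alternative to the paper's argument: you show the differences $\psi^\prime(\pi_k(a|s))-\psi^\prime(\pi_k(b|s))$ on $\calA^*_s$ are Cauchy, then recover the limit through the strictly increasing map $t\mapsto\sum_a(\psi^\prime)^{-1}(t+c_{aa_0})$. However, the step meant to make it work when $\psi^\prime(0)$ is finite is false. It is not true that an optimal coordinate at $0$ is eventually pushed upward, and $\lambda_s$ need not vanish on $\calA^*_s$ for large $k$.

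A Case~3 counterexample (e.g.\ Hellinger) runs as follows. Let $s$ have three actions, all optimal. Actions $a_1,a_2$ move with reward $r\le\gamma$ to an absorbing zero-reward state. Action $b$ moves with reward $0$ to a state $s^\prime$ where one action pays $r/\gamma$ and the others pay $0$, so $s^\prime\in\calS_{=1}$. Since $1\notin\interior\,\dom\,\psi$, $\pi_k(\cdot|s^\prime)$ is never deterministic, and hence $Q^{\pi_k}(s,b)<r=Q^{\pi_k}(s,a_1)=Q^{\pi_k}(s,a_2)$ for every $k$. Start from $\pi_0(\cdot|s)=(1/2,1/2,0)$, which is admissible because $0\in\interior\,\dom\,\psi$. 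The KKT system is then solved by $\nu_s=-\eta r$, so $\pi_k(\cdot|s)=(1/2,1/2,0)$ for all $k$, with $\lambda_s(b)=\eta\,(r-Q^{\pi_k}(s,b))>0$ at every iteration. This happens even though the value errors are summable ($O(1/k^2)$ here) and $s$ has no suboptimal mass at all. So there is no time ``from which the Cauchy argument applies''.

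\textbf{How to repair it.} Your argument only needs the multipliers on $\calA^*_s$ to be summable, and this holds at every step.
\begin{enumerate}
\item If $\nu_s+\eta\max_aQ^{\pi_k}(s,a)<0$, then every coordinate positive at $k+1$ strictly decreases, and the others satisfy $0\le\pi_k(a|s)$. This contradicts $\sum_a\pi_{k+1}(a|s)=1$, so $\nu_s\ge-\eta V^*(s)$.
\item If $\lambda_s(b)>0$, then $\pi_{k+1}(b|s)=0$, so $\psi^\prime(\pi_{k+1}(b|s))\le\psi^\prime(\pi_k(b|s))$. The KKT identity then gives $0\le\lambda_s(b)\le\eta\,(Q^*(s,b)-Q^{\pi_k}(s,b))\le\eta\gamma e_k$ for $b\in\calA^*_s$.
\end{enumerate}
The increments of your dual differences are therefore at most $2\eta\gamma e_k$, with no vanishing claim needed. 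The spread bound becomes automatic, and your recovery step goes through.

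\textbf{A smaller slip.} It is false that some optimal coordinate lies in $[1/|\calA|,1-1/|\calA|]$; take masses $(1-\epsilon,\epsilon)$. Instead, anchor the largest optimal coordinate, which is at least $(1-b^{\pi_k}_s)/|\calA|$, for the lower bound, and the smallest, which is at most $1/2$, for the upper bound. This is what the paper does.

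\textbf{Comparison with the paper.} The paper never needs two-sided control of the multipliers. It shows $D_h(\pi^*(\cdot|s)\,\|\,\pi_k(\cdot|s))$ is quasi-Fej\'er via the three-point lemma, which absorbs the clipping automatically. It then uses only the one-sided spread bound to keep cluster points in $\interior\,\dom\,\psi$, and concludes with Bauschke--Borwein's results for Legendre Bregman divergences. Your repaired route is more hands-on and additionally describes $\pi_\infty(\cdot|s)$ explicitly through the limiting dual differences.
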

\noindent The detailed proof of Lemma~\ref{lem:summable-error-gives-policy-convergence} is given in Section~\ref{sec:pf:lem:summable-error-gives-policy-convergence}, and we provide a proof sketch here for clarity. First, we leverage the three-point-descent lemma to prove that $\big\{ D^{\pi^*}_{\pi_k}(s) \big\}$ is a quasi-Fej{\'e}r sequence for arbitrary optimal policy $\pi^*$, which implies that $\lim_{k\to\infty} D^{\pi^*}_{\pi_k}(s)$ exists for all $s\in\calS$. Next, we show that any clustering point of $\{ \pi_k \}$, denoted $\pi_\infty$, is an optimal policy and $\pi_\infty(a^*|s)$ with $a^*\in\calA^*_s$ lie in $\interior\, \dom\, \psi$ for every state $s$ with multiple optimal actions, i.e., $|\calA^*_s| > 1$. Finally, it is shown that $\lim_{k\to\infty} D^{\pi_\infty}_{\pi_k}(s) = 0$, which implies  $\pi_k \to \pi_\infty$ by standard properties of Bregman divergences generated by Legendre functions \citep{bauschke1997legendre}.

Lemma~\ref{lem:summable-error-gives-policy-convergence} reduces the policy convergence analysis to the task of establishing value-error summability. However, the global sublinear value error bound established by~\citet{Xiao_2022} (Lemma~\ref{lem:Xiao-sublinear}) decays at a rate of $O(1/k)$, which is insufficient to guarantee the summability of the value errors. To verify the summability condition required by Lemma~\ref{lem:summable-error-gives-policy-convergence}, we therefore need sharper local value error estimates. The following $\psi^\prime$-gap quantities are central to our local convergence analysis:
\begin{alignat*}{2} 
    \forall\, s\in\tilde\calS:& \quad && G_k(s) := \min_{a^*\in\calA^*_s} \, \psi^\prime(\pi_k(a^*|s)) - \max_{a^\prime\not\in\calA^*_s} \, \psi^\prime(\pi_k(a^\prime|s)). \numberthis \label{eq:score-gap-definition} \\
    \forall\, s\in\tilde\calS, \; \forall\, a^\prime \not\in \calA^*_s:& \quad && G_k(s,a^\prime) := \max_{a^*\in\calA^*_s} \, \psi^\prime(\pi_k(a^*|s)) - \psi^\prime(\pi_k(a^\prime|s)). \numberthis \label{eq:score-gap-action-definition} 
\end{alignat*}
The different min/max choices in \eqref{eq:score-gap-definition} and \eqref{eq:score-gap-action-definition} reflect their dual roles in the analysis. More precisely, \(G_k(s)\) uses the minimum gap to obtain a uniform bound for controlling suboptimal probabilities, while \(G_k(s,a')\) uses the maximum optimal coordinate to derive a lower bound on the decay of a specific suboptimal probability.

For $\sigma>0$, we further define the local entrance time as 
\begin{align}  \label{eq:local-time-definition}
    T(\sigma) := \min \big\{ k: \;\; \| V^* - V^{\pi_k} \|_\infty \leq \sigma \big\} = \min \big\{ k: \;\; \| V^* - V^{\pi_t} \|_\infty \leq \sigma, \; \forall\, t \geq k \big\},
\end{align}
where the second equality follows from the monotonicity of the value sequence in~\eqref{eq:PMD-monotonicity-V}. Additionally, the finiteness of $T(\sigma)$ is guaranteed by Lemma~\ref{lem:Xiao-sublinear}. The next lemma characterizes the one-step evolution of $G_k(s)$ and $G_k(s,a^\prime)$, and serves as a key building block for our local analysis. It also provides a finite-time termination criterion in terms of the suboptimal probabilities.
\begin{lemma}  \label{lem:gap-metric-increase}
    Consider PMD with a constant step size $\eta>0$. Suppose that $k \geq T(\sigma)$  for $\sigma \in (0, \Delta/2)$. 
    \begin{enumerate}[label=\textup{(\alph*)}]
        \item \label{lem:gap-metric-increase-lower} If $s\in\tilde\calS$ satisfies $b^{\pi_{k+1}}_s > 0$, then
        \begin{align*}
            G_{k+1}(s) \geq G_k(s) + \eta \, (\Delta_s - 2\sigma) \geq G_k(s) + \eta \, (\Delta - 2\sigma).
        \end{align*}
        \item \label{lem:gap-metric-increase-upper} If  $s\in\tilde\calS$ satisfies $b^{\pi_{k+1}}_s < 1$,  then for any $a^\prime \not \in \calA^*_s$,
        \begin{align*}
            G_{k+1}(s, a^\prime) \leq G_k(s, a^\prime) + \eta \, \big( \big[ V^*(s) - Q^*(s,a^\prime) \big] + 2\sigma \big).
        \end{align*}
        \item \label{lem:termination-condition} If $b^{\pi_k}_s = 0$ for some $s\in\tilde\calS$, then for all $t \geq k$ there holds $b^{\pi_t}_s = 0$. If $b^{\pi_k}_s = 0$ for all $s \in \tilde \calS$, then $\pi_k \in \Pi^*$ and $\pi_t = \pi_k$ for all $t \geq k$, i.e., PMD reaches an optimal policy at iteration $k$ and remains unchanged.
    \end{enumerate}
\end{lemma}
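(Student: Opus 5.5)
The plan is to work directly with the KKT characterization of Lemma~\ref{lem:KKT-cond}, combined with the estimate $\|Q^*-Q^{\pi_k}\|_\infty \le \gamma\|V^*-V^{\pi_k}\|_\infty \le \sigma$ from~\eqref{eq:Q-V-A-bounded}, valid for $k\ge T(\sigma)$. For every $a\in\calA$ this gives $Q^{\pi_k}(s,a)\in[Q^*(s,a)-\sigma,\,Q^*(s,a)]$. Consequently, for $a^*\in\calA^*_s$ and $a'\notin\calA^*_s$,
\[
Q^{\pi_k}(s,a^*)-Q^{\pi_k}(s,a') \ge V^*(s)-\sigma-Q^*(s,a') \ge \Delta_s-\sigma \ge \Delta_s - 2\sigma,
\]
and likewise $Q^{\pi_k}(s,a^*)-Q^{\pi_k}(s,a')\le V^*(s)-Q^*(s,a')+\sigma$. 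Subtracting the KKT identities of two actions cancels the common multiplier $\nu_s$:
\[
\psi'(\pi_{k+1}(a|s))-\psi'(\pi_{k+1}(a'|s)) = \psi'(\pi_k(a|s))-\psi'(\pi_k(a'|s)) + \eta\big(Q^{\pi_k}(s,a)-Q^{\pi_k}(s,a')\big) + \lambda_s(a)-\lambda_s(a').
\]
The only remaining work is to control the signs of the $\lambda_s$ terms using complementary slackness.

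For part~\ref{lem:gap-metric-increase-lower}, I pick $a'$ attaining the max in $G_{k+1}(s)$ and $a^*$ attaining the min. The key observation is that $\psi'$ is increasing, so the maximizing $a'$ has the largest $\pi_{k+1}(a'|s)$ among suboptimal actions. Since $b^{\pi_{k+1}}_s>0$, this gives $\pi_{k+1}(a'|s)>0$, hence $\lambda_s(a')=0$, while $\lambda_s(a^*)\ge 0$. The displayed identity then yields
\[
G_{k+1}(s) \ge \psi'(\pi_k(a^*|s))-\psi'(\pi_k(a'|s))+\eta(\Delta_s-2\sigma) \ge G_k(s)+\eta(\Delta_s-2\sigma).
\]
The last step uses the definition of $G_k$ as a min over optimal actions minus a max over suboptimal actions.

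For part~\ref{lem:gap-metric-increase-upper}, I take $a^*$ attaining the max in $G_{k+1}(s,a')$, i.e.\ the optimal action with the largest probability at step $k+1$. Since $b^{\pi_{k+1}}_s<1$, optimal actions carry positive total mass, so $\pi_{k+1}(a^*|s)>0$ and $\lambda_s(a^*)=0$; also $-\lambda_s(a')\le 0$. The identity then gives
\[
G_{k+1}(s,a') \le \psi'(\pi_k(a^*|s))-\psi'(\pi_k(a'|s))+\eta\big(V^*(s)-Q^*(s,a')+\sigma\big) \le G_k(s,a')+\eta\big(V^*(s)-Q^*(s,a')+2\sigma\big),
\]
where the final step bounds $\psi'(\pi_k(a^*|s))$ by the max over optimal actions. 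One technical point needs care throughout: $\psi'(0)$ must make sense. When $0\notin\interior\,\dom\,\psi$, Lemma~\ref{lem:KKT-cond} gives strictly positive iterates, so this is fine. When $0$ is an interior point, $\psi'$ is finite there.

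Part~\ref{lem:termination-condition} is the main obstacle, because it must hold for all $k$ and not just $k\ge T(\sigma)$. The plan is an induction on $t$. Suppose $b^{\pi_t}_s=0$ but $\pi_{t+1}(a'|s)>0$ for some suboptimal $a'$. This can only happen when $0\in\interior\,\dom\,\psi$. By monotonicity~\eqref{eq:PMD-monotonicity-V}, $V^{\pi_t}(s)\le V^*(s)$, and $b^{\pi_t}_s=0$ gives $V^{\pi_t}(s)=\sum_{a^*}\pi_t(a^*|s)Q^{\pi_t}(s,a^*)$. Since $\pi_t(a'|s)=0$ is the minimal feasible value and $\pi_{t+1}(a'|s)>0$, we have $\lambda_s(a')=0$. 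Comparing with an optimal action $a^*$ that has $\pi_t(a^*|s)>0$:
\[
\psi'(\pi_{t+1}(a'|s))-\psi'(\pi_{t+1}(a^*|s)) \le \psi'(0)-\psi'(\pi_t(a^*|s))+\eta\big(Q^{\pi_t}(s,a')-Q^{\pi_t}(s,a^*)\big).
\]
Here I need $Q^{\pi_t}(s,a')\le Q^{\pi_t}(s,a^*)$ for some such $a^*$. This does not follow directly from optimality, so I would instead use the three-point lemma (Lemma~\ref{lem:three-point-descent}) with $p=\pi_t(\cdot|s)$. That gives $\langle \pi_{t+1}-\pi_t, Q^{\pi_t}\rangle\ge 0$. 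I would then pair this with the strict convexity cost of moving mass onto $a'$ to argue that moving mass to a worse action is never beneficial. The fallback is to restrict to $k\ge T(\sigma)$, where $Q^{\pi_k}(s,a^*)>Q^{\pi_k}(s,a')$ holds, and the displayed inequality shows $\pi_{t+1}(a'|s)<\pi_{t+1}(a^*|s)$ together with a contradiction via the KKT conditions.

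Finally, if $b^{\pi_k}_s=0$ for all nontrivial states, then $\pi_k\in\Pi^*$ by~\eqref{eq:optimal-policies}. So $Q^{\pi_k}=Q^*$ is constant on the support actions, and $\pi_k$ itself satisfies the KKT system with $\lambda$ absorbing the zero coordinates. Strict convexity makes the maximizer unique, hence $\pi_{k+1}=\pi_k$.
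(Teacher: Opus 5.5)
Your proofs of parts (a) and (b) are correct and follow the paper's argument. You subtract the KKT identities for the extremal pair so that $\nu_s$ cancels, then use complementary slackness on the coordinate known to be positive. For the second half of (c) you take a valid alternative route: you check that $\pi_k(\cdot|s)$ satisfies the sufficient first-order conditions with $\nu=-\eta V^*(s)$ and $\lambda(a)=-\eta A^*(s,a)\ge 0$, then invoke uniqueness of the maximizer. The paper instead compares values directly, showing $g(\pi_{k+1}(\cdot|s))<0=g(\pi_k(\cdot|s))$.

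The gap is in the first half of (c), in two respects.

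\textbf{The main plan rests on a misreading.} The standing hypothesis $k\ge T(\sigma)$ covers all three parts, and without it the claim is false. For example, take PQA at a two-action state $s$, with $\pi_0(\cdot|s)$ supported on $a^*$ but $\pi_0$ poor downstream of $a^*$. Then $Q^{\pi_0}(s,a')>Q^{\pi_0}(s,a^*)$ is possible, and the projection gives $\pi_1(a'|s)>0$. So the three-point-lemma route, which hopes to show that moving mass onto a worse action never pays, cannot work for general $t$: $a'$ need not be worse under $Q^{\pi_t}$.

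\textbf{The fallback draws the wrong consequence.} Under $t\ge T(\sigma)$, you conclude $\pi_{t+1}(a'|s)<\pi_{t+1}(a^*|s)$, which contradicts nothing. The promised KKT contradiction is never identified. The missing ingredient is mass conservation on $\calA^*_s$. Your own inequality, read the other way, gives
\[
\psi'(\pi_{t+1}(a^*|s))-\psi'(\pi_t(a^*|s)) \ge \psi'(\pi_{t+1}(a'|s))-\psi'(0)+\eta\big(Q^{\pi_t}(s,a^*)-Q^{\pi_t}(s,a')\big) > 0 .
\]
This holds for every $a^*\in\calA^*_s$; positivity of $\pi_t(a^*|s)$ is not needed. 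So every optimal action strictly gains mass, and $\sum_{a^*\in\calA^*_s}\pi_{t+1}(a^*|s)>\sum_{a^*\in\calA^*_s}\pi_t(a^*|s)=1$, which is impossible.

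The paper uses the same counting in the opposite direction. Since $\sum_{a^*}\pi_{k+1}(a^*|s)<1$, some optimal $a^*$ must lose mass. Shifting $\epsilon$ mass from $a'$ to that $a^*$ raises the PMD objective by at least $(\Delta-2\sigma)\epsilon$: the Bregman change is nonpositive by the mean value theorem and monotonicity of $\psi'$. This contradicts optimality of $\pi_{k+1}(\cdot|s)$.

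With this step in place, induction over $t\ge k\ge T(\sigma)$ completes the first half of (c).
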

\noindent The proof of Lemma~\ref{lem:gap-metric-increase}, given in Section~\ref{sec:pf:lem:gap-metric-increase}, relies on the optimality condition in Lemma~\ref{lem:KKT-cond}. 

As highlighted in our main contributions, the convergence dynamics of PMD are closely tied to the differentiability of $\psi$ at $0$ and $1$. Let $\psi'(0^+)$ and $\psi'(1^-)$ denote the one-sided limits of $\psi'$ from the right and from the left, respectively. Then any function $\psi$ satisfying Assumption~\ref{ass:psi-Legendre} falls into exactly one of the following four regimes:

\begin{itemize}
    \item \textbf{Case 1 (Differentiable at $0$ and $1$):} $\{ 0,1 \} \subseteq \interior \, \dom \, \psi$. Then $\psi^\prime(0^+)=\psi'(0) > -\infty$ and $\psi^\prime(1^-) =\psi'(1)< +\infty$.
    
    \item \textbf{Case 2 (Non-differentiable at $0$, differentiable at $1$):} $0 \in \mathrm{bd} \, \dom\,\psi$ and $1 \in \interior\, \dom \, \psi$. Then $\psi^\prime(1^-) =\psi'(1)< +\infty$ while $\psi^\prime(0^+) = -\infty$.
    
    \item \textbf{Case 3 (Differentiable at $0$, non-differentiable at $1$):} $0 \in \interior\, \dom \, \psi$ and $1 \in \mathrm{bd} \, \dom\,\psi$. Then $\psi^\prime(0^+) =\psi'(0)> -\infty$ while  $\psi^\prime(1^-)= +\infty$.
    
    \item \textbf{Case 4 (Non-differentiable at  $0$ and $1$):} $0\in \mathrm{bd} \, \dom\,\psi$ and $1\in \mathrm{bd} \, \dom\,\psi$. Then $\psi^\prime(0^+) = -\infty$ and $\psi^\prime(1^-) = +\infty$. 
\end{itemize}

Lemma~\ref{lem:gap-metric-increase} is the key ingredient for analyzing the one-step behavior of PMD in each of the four cases. In the asymptotic cases, policy convergence further requires the summability of the value errors, and the following lemma provides the main tool for establishing this summability. Its proof is deferred to Section~\ref{sec:pf:lem:integral-discrimination}.
\begin{lemma}[Integral summability criterion]  \label{lem:integral-discrimination}
    Let $f$ be a function that is non-negative, strictly monotonic decreasing, and continuous over $(0,\epsilon]$ for some $\epsilon > 0$. If $f(\epsilon) < +\infty$, $\lim_{x\downarrow 0} f(x) = +\infty$, and
    \begin{align*}
        \int_0^\epsilon f(x) dx := \lim_{t\downarrow 0} \int_t^\epsilon f(x) dx < \infty,
    \end{align*}
    then
    \begin{align*}
        \sum_{k \geq 0} f^{-1} \big( c_0 + c_1 \cdot k \big) < \infty
    \end{align*}
    for any constants $c_0 \geq f(\epsilon)$, $c_1 > 0$, where $f^{-1}:[f(\epsilon),+\infty)\to(0,\epsilon]$ denotes the inverse function of $f$.
\end{lemma}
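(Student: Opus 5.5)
The idea is to compare the sum with the integral of $f^{-1}$, using the classical fact that the area under the inverse of a monotone function equals the area under the function itself. Since $f$ is continuous and strictly decreasing on $(0,\epsilon]$ with $f(x)\to+\infty$ as $x\downarrow 0$, the intermediate value theorem shows that $f$ maps $(0,\epsilon]$ bijectively onto $[f(\epsilon),+\infty)$. Hence $g:=f^{-1}:[f(\epsilon),+\infty)\to(0,\epsilon]$ is well defined, continuous, strictly decreasing and positive, with $g(y)\downarrow 0$ as $y\to\infty$. It therefore suffices to show
\begin{align*}
\int_{f(\epsilon)}^{\infty} g(y)\,dy<\infty .
\end{align*}

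For the series, set $y_k:=c_0+c_1k\ge f(\epsilon)$. Because $g$ is decreasing, for each $k\ge 1$ we have
\begin{align*}
c_1\, g(y_k)\le \int_{y_{k-1}}^{y_k} g(y)\,dy .
\end{align*}
Summing over $k\ge1$ gives
\begin{align*}
\sum_{k\ge0} g(y_k)\le g(c_0)+\frac{1}{c_1}\int_{c_0}^{\infty}g(y)\,dy\le \epsilon+\frac1{c_1}\int_{f(\epsilon)}^\infty g(y)\,dy .
\end{align*}

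The main step is to bound $\int_{f(\epsilon)}^\infty g$ by $\int_0^\epsilon f$. I would argue via the region under the graph. Consider
\begin{align*}
R:=\{(x,y): 0<x\le\epsilon,\ f(\epsilon)\le y\le f(x)\}.
\end{align*}
Because $f$ is strictly decreasing, $y\le f(x)$ is equivalent to $x\le g(y)$ for $y\ge f(\epsilon)$. So the same set is
\begin{align*}
R=\{(x,y): y\ge f(\epsilon),\ 0<x\le g(y)\}.
\end{align*}
Apply Tonelli to the indicator of $R$, a nonnegative measurable set since $f$ is continuous. Integrating first in $y$ gives
\begin{align*}
\int_0^\epsilon \bigl(f(x)-f(\epsilon)\bigr)\,dx=\int_0^\epsilon f(x)\,dx-\epsilon f(\epsilon).
\end{align*}
This is finite, where the improper integral equals the Lebesgue integral by monotone convergence since $f\ge0$. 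Integrating first in $x$ gives $\int_{f(\epsilon)}^\infty g(y)\,dy$, so
\begin{align*}
\int_{f(\epsilon)}^\infty g(y)\,dy=\int_0^\epsilon f(x)\,dx-\epsilon f(\epsilon)<\infty.
\end{align*}

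Alternatively, to avoid measure theory, one can use the truncated version. For $t\in(0,\epsilon)$ and $Y=f(t)$, the elementary inverse-function area identity, which can be checked by Riemann sums or integration by parts, gives
\begin{align*}
\int_{f(\epsilon)}^{Y} g=\int_t^\epsilon f + t f(t)-\epsilon f(\epsilon)\le \int_t^\epsilon f.
\end{align*}
The inequality needs the term $t f(t)$ to be harmless. By monotonicity, $t f(t)\le \int_0^t f\to0$, so $t f(t)$ is bounded and the bound passes to the limit $t\downarrow 0$. This step, handling the boundary term and the improper integral, is the only delicate point. Combining it with the series comparison above yields the claimed summability.
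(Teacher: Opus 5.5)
Your proposal is correct and follows essentially the paper's proof: the Tonelli computation on the region $R$ is exactly the layer-cake identity $\int_0^\epsilon f = \epsilon f(\epsilon) + \int_{f(\epsilon)}^\infty f^{-1}$ that the paper uses, and your comparison $c_1 f^{-1}(y_k)\le\int_{y_{k-1}}^{y_k} f^{-1}$ is the same series step. There is one small slip in your optional truncated variant: the displayed bound $\int_{f(\epsilon)}^{f(t)} g \le \int_t^\epsilon f$ fails in general (e.g.\ when $f(\epsilon)=0$), but your observation $t f(t)\le \int_0^t f\to 0$ repairs it once you let $t\downarrow 0$ in the exact identity.
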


\subsection{Finite-time termination for Case 1}

\begin{theorem}[Finite-time termination for Case 1]  \label{thm:case-1:finite-time}
    Let $\{ \pi_k \}_{k \geq 0}$ be the policy sequence generated by PMD with a constant step size $\eta>0$. If $\psi^\prime(0^+) > -\infty$ and $\psi^\prime(1^-) < +\infty$, there exists a termination time $T_{\mathrm{ter}}$ for PMD, i.e.,
    \begin{align*}
        \forall\, t \geq T_{\mathrm{ter}}: \quad \pi_t = \pi_{T_{\mathrm{ter}}} \in \Pi^*.
    \end{align*}
    Furthermore, there holds
    \begin{align*}
        T_{\mathrm{ter}} \leq T(\Delta/4) + \left\lceil \frac{4(\psi^\prime(1^-) - \psi^\prime(0^+))}{\eta\Delta} \right\rceil + 1 \leq \ls\lceil\frac{4}{\Delta} \bigg( \frac{C_\psi}{\eta(1-\gamma)} + \frac{1}{(1-\gamma)^2} \bigg)\rs\rceil + \left\lceil \frac{4(\psi^\prime(1^-) - \psi^\prime(0^+))}{\eta\Delta} \right\rceil,
    \end{align*}
    where $C_\psi := 2\sup_{p\in \Delta(\calA)} |\sum_{a}\psi(p(a))| + 2\sup_{x\in[0,1]} |\psi^\prime(x)| < \infty$.
\end{theorem}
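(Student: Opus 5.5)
Set $\sigma=\Delta/4$ and work from $k_0:=T(\Delta/4)$ on, so Lemma~\ref{lem:gap-metric-increase}\ref{lem:gap-metric-increase-lower} gives the increment $\eta(\Delta-2\sigma)=\eta\Delta/2$. The plan is to show that from $k_0$ on, every non-trivial state must have all its suboptimal probability removed within a bounded number of steps, because the $\psi'$-gap $G_k(s)$ cannot grow indefinitely when $\psi'$ is bounded on $[0,1]$.

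\textbf{Step 1 (bounded gap).} In Case 1, $\psi'$ is continuous and increasing on $[0,1]\subseteq\interior\,\dom\,\psi$. Hence for every policy and every $s\in\tilde\calS$,
\[
G_k(s)\in[\psi'(0^+)-\psi'(1^-),\;\psi'(1^-)-\psi'(0^+)].
\]

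\textbf{Step 2 (gap growth until termination).} Fix $s\in\tilde\calS$ and suppose $b^{\pi_{k_0+m}}_s>0$. By Lemma~\ref{lem:gap-metric-increase}\ref{lem:termination-condition}, once $b_s$ vanishes it stays zero. So $b^{\pi_{k+1}}_s>0$ for all $k_0\le k<k_0+m$, and Lemma~\ref{lem:gap-metric-increase}\ref{lem:gap-metric-increase-lower} applies at each of these steps, giving
\[
G_{k_0+m}(s)\ge G_{k_0}(s)+m\eta\Delta/2\ge -(\psi'(1^-)-\psi'(0^+))+m\eta\Delta/2 .
\]
Combining this with the upper bound in Step 1 yields
\[
m\le 4(\psi'(1^-)-\psi'(0^+))/(\eta\Delta).
\]
Therefore, with $m^*:=\lceil 4(\psi'(1^-)-\psi'(0^+))/(\eta\Delta)\rceil+1$, we get $b^{\pi_{k_0+m^*}}_s=0$ for every $s\in\tilde\calS$. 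States outside $\tilde\calS$ have $b_s\equiv0$ trivially. The second part of Lemma~\ref{lem:gap-metric-increase}\ref{lem:termination-condition} then shows $\pi_{k_0+m^*}\in\Pi^*$ and that the iterates are stationary from that point. This gives the first inequality.

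\textbf{Step 3 (explicit bound on $T(\Delta/4)$).} By Lemma~\ref{lem:Xiao-sublinear}, $\|V^*-V^{\pi_k}\|_\infty\le\Delta/4$ as soon as
\[
k+1\ge \frac{4}{\Delta}\Big(\frac{\|D^{\pi^*}_{\pi_0}\|_\infty}{\eta(1-\gamma)}+\frac{1}{(1-\gamma)^2}\Big).
\]
Hence $T(\Delta/4)+1\le\lceil\cdot\rceil$ of that expression once $\|D^{\pi^*}_{\pi_0}\|_\infty\le C_\psi$ is established. To see the latter, write $D_h(p\|q)=h(p)-h(q)-\langle\nabla h(q),p-q\rangle$. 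The first two terms are each bounded by $\sup_{\Delta}|h|$. For the inner product, shift $\psi'(q(a))$ by a constant, which leaves the term unchanged because $\sum_a(p(a)-q(a))=0$. Then $\|p-q\|_1\le2$ gives a bound of at most $2\sup_{[0,1]}|\psi'|$. Finiteness of $C_\psi$ follows from continuity of $\psi$ and $\psi'$ on the compact set $[0,1]$ inside the interior of the domain.

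The step I expect to be most delicate is Step 2. There one must check that the hypothesis $b^{\pi_{k+1}}_s>0$ of Lemma~\ref{lem:gap-metric-increase}\ref{lem:gap-metric-increase-lower} holds at every intermediate step, which is exactly what the absorbing property in \ref{lem:termination-condition} provides. One must also get the off-by-one right so that the count matches the stated $+1$.
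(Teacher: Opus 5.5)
Your proposal is correct and follows the paper's proof essentially step for step. The paper likewise bounds $|G_k(s)|\le \psi'(1^-)-\psi'(0^+)$. It then uses the absorbing property in Lemma~\ref{lem:gap-metric-increase}~\ref{lem:termination-condition} to keep $b^{\pi_k}_s>0$ on $[T(\Delta/4),T]$, so that the $\eta\Delta/2$ increments from Lemma~\ref{lem:gap-metric-increase}~\ref{lem:gap-metric-increase-lower} accumulate to a contradiction; your contrapositive count with $m^*=\lceil\cdot\rceil+1$ is the same argument with the same off-by-one. Finally, it bounds $T(\Delta/4)\le\lceil\frac{4}{\Delta}(\cdots)\rceil-1$ via Lemma~\ref{lem:Xiao-sublinear} together with $\|D^{\pi^*}_{\pi_0}\|_\infty\le C_\psi$, exactly as you do. Your constant-shift remark in the inner-product bound is unnecessary, since the paper simply uses $\|\nabla h(q)\|_\infty\|p-q\|_1\le 2\sup_{[0,1]}|\psi'|$, but it is harmless.
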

The proof proceeds by contradiction, see Section~\ref{sec:pf:thm:case-1:finite-time} for details. 
Basically, since $\psi'$ is continuous and strictly increasing on an interval containing $[0,1]$, its range over $[0,1]$ is exactly $[\psi'(0^+),\,\psi'(1^-)]$, which is bounded. 
Therefore, the $\psi'$-gap $G_k(s)$ is uniformly bounded between $\psi'(0^+)-\psi'(1^-)$ and $\psi'(1^-)-\psi'(0^+)$. 
On the other hand, Lemma~\ref{lem:gap-metric-increase}~\ref{lem:gap-metric-increase-lower} implies that, if the suboptimal probability $b^{\pi_k}_s$ never vanishes, then $G_k(s)$ must grow to $+\infty$, hence yielding a contradiction.

\begin{example}[Tsallis entropy with $q>1$]
    For Tsallis entropy with $q>1$, $\psi(x) = \frac{1}{q-1} |x|^q$, there holds $\psi^\prime(0) = 0$, $\psi^\prime(1) = \frac{q}{q-1}$, and $C_\psi = \frac{2(q+1)}{q-1}$. Thus by Theorem~\ref{thm:case-1:finite-time}, PMD under Tsallis entropy with $q>1$ terminates in at most
    \begin{align*}
        \ls\lceil \frac{4}{\Delta} \ls( \frac{2(q+1)}{\eta(1-\gamma)(q-1)} + \frac{1}{(1-\gamma)^2} \rs) \rs\rceil + \ls\lceil \frac{4q}{\eta\Delta(q-1)} \rs\rceil
    \end{align*}
    iterations.
\end{example}
\begin{example}[Squared function, PQA]
    When the mirror map is specified as the squared function $\psi(x) = \frac{1}{2} x^2$, PMD reduces to PQA. It is shown by~\citet{ppgliu} that PQA terminates in at most
    \begin{align*}
        \ls\lceil \frac{2}{\Delta} \ls( 1 + \frac{1}{\eta\Delta} \rs) \ls( \frac{1}{\eta(1-\gamma)} + \frac{1}{(1-\gamma)^2} \rs)  - 1\rs\rceil
    \end{align*}
    iterations. In contrast, by  Theorem~\ref{thm:case-1:finite-time}, the termination time of PQA is bounded by
    \begin{align*}
        \ls\lceil \frac{4}{\Delta} \ls( \frac{3}{\eta(1-\gamma)} + \frac{1}{(1-\gamma)^2} \rs) \rs\rceil + \ls\lceil \frac{4}{\eta\Delta} \rs\rceil.
    \end{align*}
    Notably, our upper bound improves over that of~\citet{ppgliu} by removing an extra multiplicative factor of $\Delta^{-1}$.
\end{example}

\begin{remark}
    {The finite-time termination result in Theorem~\ref{thm:case-1:finite-time} should be distinguished from two related finite-time results in the literature.} 
    First, \citet{Lin2022PMD-policy-convergence} showed that unregularized PMD with any constant step size can terminate in finite time if the mirror map $h$ is differentiable over the entire simplex $\Delta(\calA)$ and is $L$-coercive on $\Delta(\calA)$, i.e.,
    \begin{align*}
        \forall\, p,q \in \Delta(\calA): \quad 
        \inner{\nabla h(p)-\nabla h(q)}{p-q}
        \geq
        \frac{1}{L}
        \left\| \nabla h(p)-\nabla h(q) \right\|_2^2 .
    \end{align*}
    {Although their result applies to possibly non-decomposable mirror maps, the $L$-coercivity condition is rather restrictive.} 
    For example, {the Tsallis entropy with $1<q<2$ is not $L$-coercive when $|\mathcal{A}|\geq 2$, whereas it is covered by Theorem~\ref{thm:case-1:finite-time}.}
    Second, for decomposable mirror maps with $\psi$ being differentiable on $[0,+\infty)$, HPMD in~\citet{Li_Zhao_Lan_2022} can reach the optimal policy set $\Pi^*$ in finitely many iterations under carefully scheduled time-varying step sizes and regularization coefficients. 
    However, after reaching $\Pi^*$, the HPMD iterates generally continue to move within the optimal set {so as to asymptotically select the maximum-entropy optimal policy}. 
    Thus, HPMD does not generally terminate in the sense of becoming stationary after reaching $\Pi^*$. 
    In contrast, for unregularized PMD with a constant step size, once the policy reaches the optimal set, it remains fixed thereafter.
\end{remark}

\begin{remark}
   The inverse dependence of the termination time on the suboptimality gap $\Delta$ established in Theorem~\ref{thm:case-1:finite-time} is unavoidable. Consider a two-armed bandit instance specified by
    \begin{align*}
        \calS = \{ s_0 \}, \;\; \calA = \{ a_0, a_1 \}, \;\; \gamma = 0, \;\; r(s_0, a_0) = c, \; \; r(s_0, a_1) = 0
    \end{align*}
    where $c > 0$. It is straightforward to see that the suboptimality gap is exactly $\Delta = c$. Now consider PMD equipped with the squared function $\psi(x) = \frac{1}{2} x^2$, which corresponds to PQA. In this two-action case, the Euclidean projection onto the simplex gives
    \begin{align*}
        \pi_{k+1}(a_0 |s_0) = \min\big\{ 1, \; \pi_k(a_0|s_0) + \eta c /2 \big\},\quad \pi_{k+1}(a_1|s_0) = \max \big\{ 0, \; \pi_k(a_1|s_0) - \eta c/2 \big\}.
    \end{align*}
    Consequently, the exact termination time for this instance is 
    \begin{align*}
        T_{\mathrm{ter}} = \ls\lceil \frac{2\pi_0(a_1|s_0)}{\eta\Delta} \rs\rceil. 
    \end{align*}
\end{remark}

\subsection{Local value error bounds and policy convergence for Case~2}
In Case~2, we have $0 \not\in \interior\, \dom \, \psi$, which implies by Lemma~\ref{lem:KKT-cond} that $\pi_k(a|s) > 0$ for all $k \geq 0$ and $(s,a) \in \calS\times\calA$. Consequently, finite-time termination is impossible as we assume that $\tilde\calS \neq \emptyset$. To establish the asymptotic policy convergence, it suffices to show that $\sum_{k=0}^{\infty} \| V^* - V^{\pi_k} \|_\infty < \infty$ via Lemma~\ref{lem:summable-error-gives-policy-convergence}.
This requires a sharper local value error bound than the global $O(1/k)$ rate.


\begin{theorem}[Local bounds for value error, Case~2]  \label{thm:case-2-local-convergence}
    Let $\{\pi_k\}_{k\geq 0}$ and $\{ V^{\pi_k} \}_{k\geq 0}$ be the policy and value sequences generated by PMD with a constant step size $\eta>0$. If $\psi^\prime(0^+) = -\infty$ and $\psi^\prime(1^-) < +\infty$, for any $\sigma \in (0, \, \Delta/2)$, there exist finite iterations $T_1(\sigma)$ and $T_2(\sigma)$ such that
    \begin{alignat*}{2}
        &\forall\, k \geq T_1(\sigma) : \quad && \big\| V^* - V^{\pi_k} \big\|_\infty \leq \frac{|\calA|}{(1-\gamma)^2} \cdot \varphi^{-1} \big( k \eta\,(\Delta-2\sigma) - C_1(\sigma) \big), \\
        &\forall\, k \geq T_2(\sigma): \quad && \big\| V^* - V^{\pi_k} \big\|_\infty \geq \Delta \cdot \varphi^{-1} \big( k \eta \, (\Delta + 2\sigma) - C_2(\sigma) \big),
    \end{alignat*}
    where $C_1(\sigma)$ and $C_2(\sigma)$ are finite constants. Here, $\varphi(x) := -\psi^\prime(x)$ is a continuous and strictly decreasing function mapping $(0,1]$ onto $[-\psi^\prime(1^-), \; +\infty)$, and $\varphi^{-1}$ denotes the inverse of $\varphi$.
\end{theorem}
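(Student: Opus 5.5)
The plan is to combine the one-step $\psi'$-gap recursions of Lemma~\ref{lem:gap-metric-increase} with the two-sided bound of Lemma~\ref{lem:sub-optimal-probabilities}, converting a linear growth of the dual gap into a decay of suboptimal probabilities through $\varphi^{-1}$. Fix $\sigma\in(0,\Delta/2)$ and work for $k\ge T(\sigma)$. In Case~2, \eqref{eq:KKT-cond-0-not-diff} gives $\pi_k(a|s)>0$ for all $k$, so $b^{\pi_{k+1}}_s>0$ and $b^{\pi_{k+1}}_s<1$ for every $s\in\tilde\calS$. Hence both parts \ref{lem:gap-metric-increase-lower} and \ref{lem:gap-metric-increase-upper} of Lemma~\ref{lem:gap-metric-increase} apply at every step $k\ge T(\sigma)$. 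Telescoping yields
\[
G_k(s)\ \ge\ G_{T(\sigma)}(s)+(k-T(\sigma))\,\eta(\Delta-2\sigma),
\]
and, for any $a'\notin\calA^*_s$,
\[
G_k(s,a')\ \le\ G_{T(\sigma)}(s,a')+(k-T(\sigma))\,\eta\big([V^*(s)-Q^*(s,a')]+2\sigma\big).
\]
Both initial values are finite because all probabilities at time $T(\sigma)$ are positive.

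For the upper bound, we translate $G_k(s)$ into a bound on each suboptimal probability. For $a'\notin\calA^*_s$, the definition gives $\psi'(\pi_k(a'|s))\le \min_{a^*}\psi'(\pi_k(a^*|s))-G_k(s)\le \psi'(1^-)-G_k(s)$, since $\pi_k(a^*|s)\le 1$ and $\psi'$ is increasing. Thus
\[
\varphi(\pi_k(a'|s))\ \ge\ G_k(s)-\psi'(1^-),
\]
and, because $\varphi$ is strictly decreasing, $\pi_k(a'|s)\le \varphi^{-1}\big(G_k(s)-\psi'(1^-)\big)$ once the argument lies in the range $[-\psi'(1^-),\infty)$. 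Summing over at most $|\calA|$ suboptimal actions and taking the minimum over $s\in\tilde\calS$ of the telescoped lower bound gives
\[
b^{\pi_k}_s\ \le\ |\calA|\,\varphi^{-1}\big(k\eta(\Delta-2\sigma)-C_1(\sigma)\big)
\]
uniformly in $s$. Here $C_1(\sigma)$ absorbs $T(\sigma)\eta(\Delta-2\sigma)$, $\psi'(1^-)$, and $-\min_s G_{T(\sigma)}(s)$; states outside $\tilde\calS$ have $b=0$. The upper bound of Lemma~\ref{lem:sub-optimal-probabilities} then finishes the argument, since $\E_{s'\sim d^{\pi_k}_s}[b^{\pi_k}_{s'}]\le\max_{s'}b^{\pi_k}_{s'}$. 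We choose $T_1(\sigma)$ large enough that the argument of $\varphi^{-1}$ is admissible.

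For the lower bound, pick $s_0\in\tilde\calS$ attaining $\Delta_{s_0}=\Delta$ and $a'$ attaining $V^*(s_0)-Q^*(s_0,a')=\Delta$. Then $G_k(s_0,a')\le C+k\eta(\Delta+2\sigma)$. We also need a lower bound on $\psi'(\pi_k(a'|s_0))$. Since $b^{\pi_k}_{s_0}\to0$, some optimal coordinate has $\pi_k(a^*|s_0)\ge 1/|\calA|$, but this is a lower bound and we need an upper bound on $\max_{a^*}\psi'(\pi_k(a^*|s_0))$. That upper bound is simply $\psi'(1^-)<\infty$, which is exactly where the Case~2 assumption enters. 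Hence
\[
\varphi(\pi_k(a'|s_0))=-\psi'(\pi_k(a'|s_0))\le G_k(s_0,a')-\psi'(1^-)\le k\eta(\Delta+2\sigma)-C_2(\sigma),
\]
so $\pi_k(a'|s_0)\ge\varphi^{-1}\big(k\eta(\Delta+2\sigma)-C_2(\sigma)\big)$. Combining with $\|V^*-V^{\pi_k}\|_\infty\ge V^*(s_0)-V^{\pi_k}(s_0)\ge \Delta\, b^{\pi_k}_{s_0}\ge\Delta\,\pi_k(a'|s_0)$ gives the claim.

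The main obstacle is bookkeeping rather than ideas. We must check that the arguments of $\varphi^{-1}$ lie in $[-\psi'(1^-),\infty)$, which defines $T_1(\sigma)$ and $T_2(\sigma)$ and the sign conventions in $C_1(\sigma)$ and $C_2(\sigma)$. We also need monotonicity of $\varphi^{-1}$ to hold in the right direction when we replace $G_k$ by its telescoped bounds. Both parts of Lemma~\ref{lem:gap-metric-increase} remain applicable at every step because interior positivity excludes the boundary cases $b=0$ and $b=1$.
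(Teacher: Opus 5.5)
Your upper-bound argument matches the paper's proof, and it is fine. You telescope Lemma~\ref{lem:gap-metric-increase}~\ref{lem:gap-metric-increase-lower}, bound $\min_{a^*}\psi'(\pi_k(a^*|s))$ by $\psi'(1^-)$, invert $\varphi$, sum over suboptimal actions, and apply Lemma~\ref{lem:sub-optimal-probabilities}.

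The lower bound has a reversed inequality that breaks the argument. By definition,
\[
\varphi(\pi_k(a'|s_0)) = -\psi'(\pi_k(a'|s_0)) = G_k(s_0,a') - \max_{a^*\in\calA^*_{s_0}}\psi'(\pi_k(a^*|s_0)).
\]
To bound $\varphi(\pi_k(a'|s_0))$ from above, you therefore need a \emph{lower} bound on $\max_{a^*}\psi'(\pi_k(a^*|s_0))$. Substituting the upper bound $\max_{a^*}\psi'(\pi_k(a^*|s_0))\le\psi'(1^-)$ gives $\varphi(\pi_k(a'|s_0))\ge G_k(s_0,a')-\psi'(1^-)$, which is the opposite of what you wrote. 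Your inequality is in fact strictly false at every iterate in Case~2. Since $\pi_k(a'|s_0)>0$, we have $\max_{a^*}\pi_k(a^*|s_0)<1$, hence $\max_{a^*}\psi'(\pi_k(a^*|s_0))<\psi'(1)$.

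The fix is the observation you dismissed. For $k\ge T(\sigma)$, Lemma~\ref{lem:sub-optimal-probabilities} gives $b^{\pi_k}_{s_0}\le\sigma/\Delta$. Hence $\max_{a^*}\pi_k(a^*|s_0)\ge(1-\sigma/\Delta)/|\calA|$, and by monotonicity $\max_{a^*}\psi'(\pi_k(a^*|s_0))\ge\psi'\big((1-\sigma/\Delta)/|\calA|\big)$. This is a finite constant because the point lies in $(0,1)$. It follows that
\[
\varphi(\pi_k(a'|s_0))\le G_{T(\sigma)}(s_0,a') + \eta(\Delta+2\sigma)\big(k-T(\sigma)\big) - \psi'\big((1-\sigma/\Delta)/|\calA|\big).
\]
This defines $C_2(\sigma)$, and inverting $\varphi$ gives the claim; this is exactly what the paper does.

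In the lower bound, the role of the Case~2 boundary behavior is the reverse of what you describe. The finite ceiling $\psi'(1^-)$ on the optimal coordinates belongs to the upper-bound argument. In the lower bound, $\psi'(1^-)<\infty$ enters only through the domain of $\varphi^{-1}$. What the lower bound actually needs is that the optimal mass stays away from $0$, where $\psi'$ diverges to $-\infty$.
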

The detailed proof of Theorem~\ref{thm:case-2-local-convergence} is deferred to Section~\ref{sec:pf:thm:case-2-local-convergence}. Combining the local upper bound with Lemma~\ref{lem:integral-discrimination}, we obtain that the value errors are summable, i.e., $\sum_{k=0}^\infty \| V^* - V^{\pi_k} \|_\infty < \infty$, which in turn implies the policy convergence through Lemma~\ref{lem:summable-error-gives-policy-convergence}. We formalize this result in the following theorem, with its complete proof provided in Section~\ref{sec:pf:thm:case-2-policy-convergence}. 
\begin{theorem}[Policy convergence for Case~2]  \label{thm:case-2-policy-convergence}
       Let $\{\pi_k\}_{k\geq 0}$ and $\{ V^{\pi_k} \}_{k\geq 0}$ be the policy and value sequences generated by PMD with a constant step size $\eta>0$. If $\psi^\prime(0^+) = -\infty$ and $\psi^\prime(1^-) < +\infty$, the sequence of value errors is summable, namely, $\sum_{k=0}^\infty \| V^* - V^{\pi_k} \|_\infty < \infty$. Consequently, the policy sequence converges to an optimal policy $\pi_\infty$, i.e.,
    \begin{align*}
        \lim_{k \to \infty} \pi_k = \pi_\infty \in \Pi^*.
    \end{align*}
    Moreover, the limiting policy $\pi_\infty$ satisfies the properties in~\eqref{eq:policy-convergence-to-interior}.
\end{theorem}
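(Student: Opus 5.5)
The plan is to combine the local upper bound of Theorem~\ref{thm:case-2-local-convergence} with the integral summability criterion, Lemma~\ref{lem:integral-discrimination}, applied to $f=\varphi$. Once summability holds, the convergence statement and the properties in~\eqref{eq:policy-convergence-to-interior} follow directly from Lemma~\ref{lem:summable-error-gives-policy-convergence}.

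First fix any $\sigma\in(0,\Delta/2)$, for instance $\sigma=\Delta/4$. Theorem~\ref{thm:case-2-local-convergence} gives, for $k\ge T_1(\sigma)$,
\begin{align*}
\|V^*-V^{\pi_k}\|_\infty \le \frac{|\calA|}{(1-\gamma)^2}\,\varphi^{-1}\big(k\eta(\Delta-2\sigma)-C_1(\sigma)\big).
\end{align*}
The finitely many terms with $k<T_1(\sigma)$ are each bounded by $1/(1-\gamma)$, so they do not affect summability. Next choose $\epsilon\in(0,1]$ small enough that $\varphi>0$ on $(0,\epsilon]$; this is possible because $\varphi(x)\to+\infty$ as $x\downarrow0$. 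Then $f=\varphi$ restricted to $(0,\epsilon]$ is non-negative, continuous, and strictly decreasing, with $f(\epsilon)<\infty$ and $\lim_{x\downarrow0}f(x)=+\infty$. Write $c_1=\eta(\Delta-2\sigma)>0$. Choose an index shift $k_0\ge T_1(\sigma)$ so that $c_0:=k_0c_1-C_1(\sigma)\ge\varphi(\epsilon)$. Reindexing $k=k_0+j$, the tail becomes $\sum_{j\ge0}\varphi^{-1}(c_0+c_1j)$, which is finite by Lemma~\ref{lem:integral-discrimination}. On the range $[\varphi(\epsilon),\infty)$ the global inverse $\varphi^{-1}$ coincides with the inverse of the restriction, so no mismatch arises there.

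The main obstacle is checking the integrability hypothesis $\int_0^\epsilon\varphi(x)\,dx<\infty$, that is, $\int_0^\epsilon-\psi'(x)\,dx<\infty$. I would argue this via Assumption~\ref{ass:psi-Legendre}: $0\in\dom\psi$ and $\psi$ is closed and convex, hence continuous along the segment $[0,\epsilon]$ (a closed convex function on $\mathbb{R}$ is continuous relative to its closed domain interval). Since $\psi$ is differentiable on $(0,\epsilon]$ with continuous derivative, for $t\downarrow0$ we have
\begin{align*}
\int_t^\epsilon\varphi(x)\,dx=\psi(t)-\psi(\epsilon)\to\psi(0)-\psi(\epsilon),
\end{align*}
which is finite because $\psi(0)<\infty$ by~\ref{ass:psi-Legendre:dom}. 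This yields the required bound.

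Combining the three steps gives $\sum_{k\ge0}\|V^*-V^{\pi_k}\|_\infty<\infty$. Lemma~\ref{lem:summable-error-gives-policy-convergence} then gives existence of $\pi_\infty=\lim_k\pi_k\in\Pi^*$ together with the properties in~\eqref{eq:policy-convergence-to-interior}.
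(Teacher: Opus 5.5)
Your proposal is correct and follows the paper's proof essentially step for step: the local upper bound of Theorem~\ref{thm:case-2-local-convergence} with $\sigma=\Delta/4$, then Lemma~\ref{lem:integral-discrimination} with $f=\varphi$, where integrability comes from $\int_0^\epsilon\varphi(x)\,dx=\psi(0)-\psi(\epsilon)<\infty$ via closedness of $\psi$, and finally Lemma~\ref{lem:summable-error-gives-policy-convergence}. Your explicit reindexing to secure $c_0\ge\varphi(\epsilon)$, together with your treatment of the finitely many initial terms and of the inverse of the restriction, just makes precise what the paper leaves implicit.
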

Furthermore, if $\psi$ is twice continuously differentiable on $\interior\,\dom\,\psi$ with 
$0<\psi''(x)<+\infty$ for all $x\in\interior\,\dom\,\psi$, we can obtain 
a local policy convergence rate under Case~2, characterizing how fast $\pi_k$ 
approaches its limiting optimal policy $\pi_\infty$. This condition is satisfied 
by standard examples such as the negative Shannon entropy and the Tsallis entropy 
with $0<q<1$. The theorem below states this result, with the proof deferred to 
Section~\ref{sec:pf:thm:case-2-policy-convergence-rate}.
\begin{theorem}[Policy convergence rate for Case~2]  \label{thm:case-2-policy-convergence-rate}
    Let $\{ \pi_k \}_{k\geq 0}$ be the policy sequence generated by PMD with a constant step size $\eta>0$.  Let $\pi_\infty$ denote the limiting policy under the conditions  $\psi^\prime(0^+) = -\infty$ and $\psi^\prime(1^-) < +\infty$. If $\psi$ is twice continuously differentiable on $\interior\, \dom \, \psi$ and $0 < \psi^{\prime\prime}(x) < +\infty$ for all $x\in\interior\, \dom \, \psi$, then
    \begin{enumerate}[label=\textup{(\alph*)}]
        \item \label{thm:case-2-policy-convergence-rate-optimal=1} for any $s\in\calS_{=1}$ and $\sigma\in(0, \, \Delta/2)$, 
        \begin{align*}
            \forall\, k \geq T_1(\sigma): \quad \| \pi_k(\cdot|s) - \pi_\infty(\cdot|s)\|_\infty \leq b^{\pi_k}_s \leq |\calA| \cdot \varphi^{-1}\big( k \eta \, (\Delta - 2\sigma) - C_1(\sigma) \big),
        \end{align*}
        where $T_1(\sigma)$ and $C_1(\sigma)$ are the constants defined in Theorem~\ref{thm:case-2-local-convergence};
        \item \label{thm:case-2-policy-convergence-rate-optimal>1} for any $s\in\calS_{>1}$ and $\sigma \in (0, \, \Delta/2)$, there exists a finite iteration $T_3(\sigma)$ such that
        \begin{align*}
            \forall\, k \geq T_3(\sigma): \quad \| \pi_k(\cdot|s) - \pi_\infty(\cdot|s) \|_\infty &\leq \frac{U_s}{L_s} \cdot b^{\pi_k}_s + \frac{2\eta\,U_s}{L_s^2} \sum_{t=k}^\infty \big\| V^* - V^{\pi_t} \big\|_\infty,
        \end{align*}
        where $U_s$ and $L_s$ are strictly positive finite constants depending on the limiting policy $\pi_\infty(\cdot|s)$ and the second derivative $\psi^{\prime\prime}$.
    \end{enumerate}
\end{theorem}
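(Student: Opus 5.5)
Part~\ref{thm:case-2-policy-convergence-rate-optimal=1} is essentially immediate. For $s\in\calS_{=1}$ with $\calA^*_s=\{a^*\}$, Lemma~\ref{lem:summable-error-gives-policy-convergence} (via Theorem~\ref{thm:case-2-policy-convergence}) gives $\pi_\infty(a^*|s)=1$ and $\pi_\infty(a'|s)=0$ for $a'\neq a^*$. Hence $\|\pi_k(\cdot|s)-\pi_\infty(\cdot|s)\|_\infty=\max\{1-\pi_k(a^*|s),\max_{a'}\pi_k(a'|s)\}=b^{\pi_k}_s$. The bound on $b^{\pi_k}_s$ is the intermediate estimate from the proof of Theorem~\ref{thm:case-2-local-convergence}. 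There, Lemma~\ref{lem:gap-metric-increase}\ref{lem:gap-metric-increase-lower} (always applicable, since $b^{\pi_{k+1}}_s>0$ in Case~2) gives $G_k(s)\ge k\eta(\Delta-2\sigma)-C_1(\sigma)$. Since $\psi'(\pi_k(a^*|s))\le\psi'(1^-)$, every suboptimal coordinate satisfies $\varphi(\pi_k(a'|s))\ge G_k(s)-\psi'(1^-)$, so $\pi_k(a'|s)\le\varphi^{-1}(\cdot)$. Summing over at most $|\calA|$ actions gives the bound. I would restate that chain with the constant absorbed into $C_1(\sigma)$.

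For part~\ref{thm:case-2-policy-convergence-rate-optimal>1}, the suboptimal coordinates are again bounded by $b^{\pi_k}_s$, so the work is on the optimal coordinates $a^*\in\calA^*_s$, where $\pi_\infty(a^*|s)\in\interior\,\dom\,\psi$. I would work in the dual. By \eqref{eq:KKT-cond-0-not-diff}, $\psi'(\pi_{t+1}(a|s))-\psi'(\pi_t(a|s))=\eta Q^{\pi_t}(s,a)+\nu_{s,t}$. For two optimal actions $a^*,\tilde a^*$, the multiplier cancels:
\begin{align*}
\big[\psi'(\pi_{t+1}(a^*|s))-\psi'(\pi_{t+1}(\tilde a^*|s))\big]-\big[\psi'(\pi_{t}(a^*|s))-\psi'(\pi_{t}(\tilde a^*|s))\big]=\eta\big(Q^{\pi_t}(s,a^*)-Q^{\pi_t}(s,\tilde a^*)\big).
\end{align*}
Since $Q^*(s,a^*)=Q^*(s,\tilde a^*)$, the increment is at most $2\eta\gamma\|V^*-V^{\pi_t}\|_\infty\le 2\eta\|V^*-V^{\pi_t}\|_\infty$ in absolute value. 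Telescoping from $k$ to $\infty$, and using convergence of $\pi_t$ with $\pi_\infty$ interior on these coordinates, gives
\begin{align*}
\big|\big(\psi'(\pi_k(a^*|s))-\psi'(\pi_k(\tilde a^*|s))\big)-\big(\psi'(\pi_\infty(a^*|s))-\psi'(\pi_\infty(\tilde a^*|s))\big)\big|\le 2\eta\sum_{t\ge k}\|V^*-V^{\pi_t}\|_\infty=:\epsilon_k .
\end{align*}

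Next I convert dual differences into primal ones. Fix a compact neighborhood $K_s\subset\interior\,\dom\,\psi$ of $\{\pi_\infty(a^*|s):a^*\in\calA^*_s\}$. By continuity and positivity of $\psi''$, let $L_s=\min_{K_s}\psi''>0$ and $U_s=\max_{K_s}\psi''<\infty$. Choose $T_3(\sigma)\ge T_1(\sigma)$ so that $\pi_k(a^*|s)\in K_s$ for all $k\ge T_3$. Write $x_a=\pi_k(a|s)$, $y_a=\pi_\infty(a|s)$, and $\delta_a=\psi'(x_a)-\psi'(y_a)$. The display shows that all $\delta_a$, $a\in\calA^*_s$, lie within $\epsilon_k$ of a common value $c$. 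Normalization gives $\sum_{a\in\calA^*_s}(x_a-y_a)=-b^{\pi_k}_s$.

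By the mean value theorem, $x_a-y_a=\delta_a/\psi''(\xi_a)$. If $c>\epsilon_k$, all $\delta_a$ are positive, so $\sum(x_a-y_a)\ge (c-\epsilon_k)/U_s>0$, contradicting a nonpositive sum. The symmetric argument handles $c<-\epsilon_k$ and gives $(|c|-\epsilon_k)/U_s\le b^{\pi_k}_s$. Hence $|c|\le U_s b^{\pi_k}_s+\epsilon_k$ in all cases. Then $|x_a-y_a|\le(|c|+\epsilon_k)/L_s\le \frac{U_s}{L_s}b^{\pi_k}_s+\frac{2\epsilon_k}{L_s}$. The stated form has $2\eta U_s/L_s^2$ in front of the sum. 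This coincides up to constants, since $U_s/L_s\ge1$ and $\epsilon_k$ carries $2\eta$. I would tune this pigeonhole step, e.g.\ sandwiching $x_a-y_a$ between $\delta_a/U_s$ and $\delta_a/L_s$, to match the paper's constant exactly.

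The main obstacle is this last step: cancelling the unknown multiplier $\nu_s$ through the common shift $c$, and bounding it via the simplex constraint together with the two-sided curvature bounds. Everything else is bookkeeping on top of Theorem~\ref{thm:case-2-local-convergence} and Lemma~\ref{lem:summable-error-gives-policy-convergence}.
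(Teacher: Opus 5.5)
Your proposal is correct and follows the paper's route for part (b). You cancel $\nu_s$ by differencing the KKT identities of two optimal actions. You telescope to infinity using summability and interiority of $\pi_\infty(a^*|s)$. You then control the common dual shift through the simplex constraint and two-sided curvature bounds on a compact set. Take $K_s$ to be an interval, as the paper does with $\mathcal{I}_s$, so that the mean-value points $\xi_a$ stay inside it. Part (a) also matches the paper.

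The one step to tighten is the final constant, and the justification you give for it ($U_s/L_s\ge 1$) does not work. Your tail coefficient is $\frac{2\epsilon_k}{L_s}=\frac{4\eta}{L_s}$. It is dominated by the stated $\frac{2\eta U_s}{L_s^2}$ only when $U_s\ge 2L_s$, which need not hold.

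The fix is to do what the paper does with $c_k(s)$: set $c:=\delta_a$ for the very coordinate $a$ you are estimating, then let $a$ range over $\calA^*_s$. This removes the extra $\epsilon_k$, since $|x_a-y_a|\le |c|/L_s$ directly. Your sign argument still gives $|c|\le U_s b^{\pi_k}_s+\epsilon_k$. Hence $|x_a-y_a|\le \frac{U_s}{L_s}b^{\pi_k}_s+\frac{2\eta}{L_s}\sum_{t\ge k}\|V^*-V^{\pi_t}\|_\infty$, which is at most the stated bound because $U_s\ge L_s$.

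Your way of bounding the shift differs mildly from the paper's, and is slightly sharper. The paper splits $b^{\pi_k}_s=(\mathrm{I})+(\mathrm{II})$, with $|(\mathrm{I})|\ge |\calA^*_s|\,|c_k(s)|/U_s$ and $|(\mathrm{II})|\le \frac{\eta}{L_s}\sum|e_k(s,\tilde a^*)|$. That costs an extra factor $U_s/L_s$ on the tail term, which your sign-based case analysis on $c$ avoids.
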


\begin{example}[Negative Shannon entropy, softmax NPG]  \label{example:Shannon}
    Consider the negative Shannon entropy $\psi(x) = x\ln x - x$, under which PMD reduces to softmax NPG. We have $\psi^\prime(x) = \ln x$, $\varphi(x) = -\ln x$, and $\varphi^{-1} (y) = \exp(-y)$. Theorem~\ref{thm:case-2-local-convergence} implies that
    \begin{align*}
        \forall\, k \geq T_1(\sigma): \quad \big\| V^* - V^{\pi_k} \big\|_\infty &\leq \frac{|\calA|}{(1-\gamma)^2} \exp \big( -k\eta \, (\Delta - 2\sigma) + C_1(\sigma) \big) \lesssim \exp(-k\eta\, (\Delta - 2\sigma)), \\
        \forall\, k \geq T_2(\sigma): \quad \big\|V^* - V^{\pi_k}\big\|_\infty &\geq \Delta \cdot \exp \big( - k\eta \, (\Delta + 2\sigma) + C_2(\sigma) \big) \gtrsim \exp(-k\eta\, (\Delta + 2\sigma)). 
    \end{align*}
    Therefore, for the negative Shannon entropy, our local value error bound yields the asymptotic rate of $\exp(-k\eta\Delta)$, exactly matching the rate established in~\citet{li2025phi} for softmax NPG. 
Furthermore, Theorem~\ref{thm:case-2-policy-convergence} recovers and extends the policy convergence result of~\citet{li2025phi} within the broader Case~2 framework. Finally, Theorem~\ref{thm:case-2-policy-convergence-rate} yields a unified local policy convergence rate for all states (overall same rate for $s\in \calS_{=1}$ and $s\in \calS_{>1}$):
    \begin{align*}
        \forall\, s\in\calS: \quad \| \pi_k(\cdot|s) - \pi_\infty(\cdot|s) \|_\infty \lesssim \exp(- k\eta \, (\Delta -2\sigma)). 
    \end{align*}

\end{example}

\begin{example}[Tsallis entropy with $0 < q < 1$]  \label{example:Tsallis-q<1}
    Consider the Tsallis entropy $\psi(x) = \frac{1}{q-1} x^q$ with $0 < q < 1$ and $\dom \, \psi = [0, \, +\infty)$. We have $\psi^\prime(x) = \frac{q}{q-1} x^{q-1}$, $\varphi(x) = \frac{q}{1-q} x^{q-1}$, and the inverse function $\varphi^{-1}(y) = \Big( \frac{q}{(1-q)y} \Big)^{\frac{1}{1-q}}$. Theorem~\ref{thm:case-2-local-convergence} then implies that
    \begin{align*}
        \forall\, k \geq T_{1}(\sigma): \quad \big\| V^* - V^{\pi_k} \big\|_\infty &\leq \frac{|\calA|}{(1-\gamma)^2} \Bigg( \frac{q}{1-q} \frac{1}{k \eta \, (\Delta - 2\sigma) - C_1(\sigma)} \Big)^{\frac{1}{1-q}} \lesssim \Bigg( \frac{1}{k \eta\,(\Delta - 2\sigma)} \Bigg)^{\frac{1}{1-q}}, \\
        \forall\, k \geq T_{2}(\sigma): \quad \big\|V^* - V^{\pi_k}\big\|_\infty &\geq \Delta \cdot \Bigg( \frac{q}{1-q} \frac{1}{k \eta \, (\Delta + 2\sigma) - C_2(\sigma)} \Big)^{\frac{1}{1-q}} \gtrsim \Bigg( \frac{1}{k \eta\,(\Delta + 2\sigma)} \Bigg)^{\frac{1}{1-q}}. 
    \end{align*}
    This indicates that PMD equipped with the Tsallis entropy with $0<q<1$ has a strictly sublinear local value-error convergence rate. Moreover, the asymptotic policy convergence is also guaranteed by Theorem~\ref{thm:case-2-policy-convergence}. Since the second derivative $\psi^{\prime\prime}(x) = q x^{q-2}$ satisfies $0 < \psi^{\prime\prime}(x) < +\infty$ for all $x \in \interior\, \dom \, \psi = (0, \, +\infty)$, Theorem~\ref{thm:case-2-policy-convergence-rate} applies and yields the following local policy convergence rates:
    \begin{alignat*}{2}
        &\forall\, s\in\calS_{=1}: \quad && \| \pi_k(\cdot|s) - \pi_\infty(\cdot|s) \|_\infty \lesssim \Bigg( \frac{1}{k \eta\,(\Delta - 2\sigma)} \Bigg)^{\frac{1}{1-q}}, \\
        &\forall\, s\in\calS_{>1}: \quad && \| \pi_k(\cdot|s) - \pi_\infty(\cdot|s) \|_\infty \lesssim \frac{1}{\Delta - 2\sigma} \Bigg( \frac{1}{k \eta\,(\Delta - 2\sigma)} \Bigg)^{\frac{q}{1-q}}.
    \end{alignat*}
    Thus, the local policy convergence rate exhibits a state-dependent dichotomy: states with a unique optimal action and states with multiple optimal actions have different rate exponents.
\end{example}

\subsection{Policy convergence dichotomy for Case 3}
Recall that in Case~3,  $0 \in \interior\, \dom \, \psi$ but $1 \in \mbox{bd} \, \dom\, \psi$. The range of $\psi^\prime$ over $[0,1)$ is $[\psi^\prime(0^+), \, +\infty)$, bounded from below but unbounded from above. While Case~3 appears to be a symmetric counterpart to Case~2 in terms of  differentiability at $0$ and $1$, its convergence behavior is more subtle. Indeed, it combines features of both finite-time termination and asymptotic convergence, depending on the optimal-action structure of the underlying MDP. Specifically, for each state $s\in\calS$:
\begin{itemize}
    \item If the optimal action is unique, i.e., $|\calA^*_s|=1$, the value 
    convergence of PMD implies that $b^{\pi_k}_s \to 0$, and hence 
    $\pi_k(a^*|s) \to 1$ for the sole optimal action $a^*$. Since 
    $1 \not\in \interior\, \dom \, \psi$, Lemma~\ref{lem:KKT-cond} ensures 
    that $\pi_k(a^*|s)$ cannot reach $1$ in finite time; consequently, PMD 
    exhibits asymptotic behavior analogous to that in Case~2.
    \item If there exist multiple optimal actions, i.e., $|\calA^*_s|>1$, 
    $b^{\pi_k}_s \to 0$ no longer forces $\pi_k(a^*|s)\to 1$ for any single 
    optimal action $a^*\in\calA^*_s$. Thus, it is possible that the probabilities 
    assigned to optimal actions stay away from the boundary point $1$, and the probability sequence $\{ \pi_k(a|s) \}_{k \geq 0}$ may remain within $\interior\, \dom \, \psi$ for all $a\in\calA$. This behavior resembles that of Case~1.
\end{itemize}
Consequently, we adopt a distinct analytical strategy for Case~3. Specifically, we first derive a coarse yet summable bound on the suboptimal probabilities, which implies
\(
    \sum_k \| V^* - V^{\pi_k} \|_\infty < +\infty,
\)
and policy convergence then follows from Lemma~\ref{lem:summable-error-gives-policy-convergence}.

\begin{theorem}[Policy convergence for Case~3]  \label{thm:case-3-policy-convergence}
    Let $\{\pi_k\}_{k\geq 0}$ be the sequence of policies generated by PMD with a constant step size $\eta>0$. If $\psi^\prime(0^+) > -\infty$ and $\psi^\prime(1^-) = +\infty$, the policy sequence converges to an optimal policy $\pi_\infty$, i.e.,
    \begin{align*}
        \lim_{k \to \infty} \pi_k = \pi_\infty \in \Pi^*.
    \end{align*}
    Moreover, the limiting policy $\pi_\infty$ satisfies the properties in~\eqref{eq:policy-convergence-to-interior}. 
\end{theorem}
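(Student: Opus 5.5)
Following the strategy announced just before the statement, I will establish $\sum_k \|V^*-V^{\pi_k}\|_\infty<\infty$ and then invoke Lemma~\ref{lem:summable-error-gives-policy-convergence}. That lemma also delivers the properties in~\eqref{eq:policy-convergence-to-interior}. By the upper bound of Lemma~\ref{lem:sub-optimal-probabilities},
\[
\|V^*-V^{\pi_k}\|_\infty\le (1-\gamma)^{-2}\max_{s} b^{\pi_k}_s ,
\]
so it suffices to show that $\sum_k b^{\pi_k}_s<\infty$ for every $s\in\tilde\calS$. This is the ``coarse yet summable'' bound on the suboptimal probabilities. States with $\calA^*_s=\calA$ have $b^{\pi_k}_s\equiv 0$.

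Fix $\sigma\in(0,\Delta/2)$ and $k\ge T(\sigma)$. For each $s\in\tilde\calS$, Lemma~\ref{lem:gap-metric-increase}\ref{lem:termination-condition} gives two possibilities. Either $b^{\pi_k}_s$ vanishes from some iteration on, and that state contributes a finite sum. Or $b^{\pi_{k+1}}_s>0$ for all $k$, in which case Lemma~\ref{lem:gap-metric-increase}\ref{lem:gap-metric-increase-lower} iterates to
\[
G_k(s)\ge G_{T(\sigma)}(s)+(k-T(\sigma))\,\eta(\Delta-2\sigma).
\]
Since $\psi'(0^+)>-\infty$, we have $\max_{a'\notin\calA^*_s}\psi'(\pi_k(a'|s))\ge\psi'(0)$. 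Hence
\[
\min_{a^*\in\calA^*_s}\psi'(\pi_k(a^*|s))\ \ge\ \psi'(0)+c_0+c_1k .
\]
If $|\calA^*_s|\ge 2$, this is already a contradiction: some optimal action has probability at most $1/2$, so $\psi'$ at it is at most $\psi'(1/2)<\infty$. Thus every $s\in\calS_{>1}$ terminates in finite time, which is the Case~1-like behaviour. If $s\in\calS_{=1}$ with optimal action $a^*$, I get $\pi_k(a^*|s)\ge (\psi')^{-1}(c_0'+c_1k)$, i.e.\ $b^{\pi_k}_s\le 1-(\psi')^{-1}(c_0'+c_1k)$.

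The main obstacle is the next step: proving summability of $1-(\psi')^{-1}(c_0'+c_1k)$ for an arbitrary Legendre $\psi$ with $\psi'(1^-)=+\infty$. I intend to apply Lemma~\ref{lem:integral-discrimination} to
\[
f(x):=\psi'(1-x)\quad\text{on } (0,\epsilon].
\]
This $f$ is continuous and strictly decreasing, tends to $+\infty$ as $x\downarrow 0$, and satisfies $f^{-1}(y)=1-(\psi')^{-1}(y)$. Its integral converges:
\[
\int_0^\epsilon f(x)\,dx=\int_{1-\epsilon}^{1}\psi'(u)\,du=\psi(1)-\psi(1-\epsilon)<\infty .
\]
This uses $1\in\dom\psi$ (Assumption~\ref{ass:psi-Legendre}\ref{ass:psi-Legendre:dom}) together with closedness, so that $\psi$ is continuous on $[1-\epsilon,1]$ and the improper integral equals the difference of values. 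For $f$ to be non-negative near $0$ I choose $\epsilon$ small enough. The constants must satisfy $c_0\ge f(\epsilon)$; I handle this by shifting the index $k$ by a finite amount.

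Lemma~\ref{lem:integral-discrimination} therefore yields $\sum_k b^{\pi_k}_s<\infty$ for $s\in\calS_{=1}$. Summing over the finitely many states, together with the finitely many iterations before $T(\sigma)$ (each bounded by $1/(1-\gamma)$), gives summable value errors. Lemma~\ref{lem:summable-error-gives-policy-convergence} then concludes the proof.
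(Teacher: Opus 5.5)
Your proposal is correct and follows essentially the paper's route. Both reduce to $\sum_k b^{\pi_k}_s<\infty$ on $\tilde\calS$, use Lemma~\ref{lem:gap-metric-increase}~\ref{lem:termination-condition} for the dichotomy, iterate part~\ref{lem:gap-metric-increase-lower}, and apply Lemma~\ref{lem:integral-discrimination} to $\chi(x)=\psi'(1-x)$ with $\int_0^\epsilon\chi=\psi(1)-\psi(1-\epsilon)$. The only difference is that the paper treats all $s\in\tilde\calS$ uniformly through $\psi'(1-b^{\pi_k}_s)\ge\min_{a^*}\psi'(\pi_k(a^*|s))$, whereas you rule out the non-terminating alternative on $\calS_{>1}$ via $G_k(s)\le\psi'(1/2)-\psi'(0)$; this incidentally gives finite-time vanishing of $b^{\pi_k}_s$ on $\calS_{>1}$ more directly than the paper's later argument through Lemma~\ref{lem:case-3-psi-prime-bounded}.
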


The detailed proof of Theorem~\ref{thm:case-3-policy-convergence} is deferred to Section~\ref{sec:pf:thm:case-3-policy-convergence}. Using this policy convergence result, we further refine the convergence behavior for specific MDP structures. For instances where every state admits multiple optimal actions (i.e., $\calS_{>1} = \calS$), we show that the dual coordinates $\psi'(\pi_k(a|s))$ are uniformly bounded above by a finite constant $\beta$, and thus $\psi^\prime(\pi_k(a|s)) \in [\psi^\prime(0^+), \, \beta]$. Then, by a contradiction argument analogous to that used in Case~1, we can establish finite-time termination for Case 3, as stated in the following theorem. The proof of this theorem is provided in Section~\ref{sec:pf:thm:case-3-finite-time}.

\begin{theorem}[Finite-time termination for Case~3]  \label{thm:case-3-finite-time}
    Let $\{ \pi_k \}_{k \geq 0}$ be the policy sequence generated by PMD with a constant step size $\eta > 0$. If $\psi^\prime(0^+) > -\infty$, $\psi^\prime(1^-) = +\infty$, and the optimal action is non-unique for all states \textup{(}i.e., $\calS_{>1} = \calS$\textup{)}, then there exists a finite time $T_{\mathrm{ter}}$ such that
    \begin{align*}
        \forall\, t \geq T_{\mathrm{ter}}: \quad \pi_t = \pi_{T_{\mathrm{ter}}} \in \Pi^*.
    \end{align*}
\end{theorem}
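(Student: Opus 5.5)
We combine the policy convergence result of Theorem~\ref{thm:case-3-policy-convergence} with a Case~1-type contradiction argument. Since $\calS_{>1}=\calS$, Theorem~\ref{thm:case-3-policy-convergence} together with \eqref{eq:policy-convergence-to-interior} gives $\pi_k\to\pi_\infty\in\Pi^*$ with $\pi_\infty(a^*|s)\in\interior\,\dom\,\psi$ for every $s$ and every $a^*\in\calA^*_s$. In particular $\pi_\infty(a^*|s)<1$, since $1\in\mathrm{bd}\,\dom\,\psi$. Suboptimal actions have $\pi_\infty(a'|s)=0<1$.

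\textbf{Step 1 (uniform upper bound on the dual coordinates).} Set $m:=\max_{s,a}\pi_\infty(a|s)<1$ and choose $\delta\in(0,1-m)$. By convergence there is $K$ with $\pi_k(a|s)\le m+\delta/2<1$ for all $k\ge K$ and all $(s,a)$. The earlier iterates $\pi_0,\dots,\pi_{K-1}$ are finitely many and lie in $\interior\,\dom\,\psi$ by Lemma~\ref{lem:KKT-cond}, so every coordinate is strictly below $1$. Let $\bar x<1$ be the maximum of all these coordinates and of $m+\delta/2$. Because $\psi'$ is increasing and finite on $[0,\bar x]\subseteq\interior\,\dom\,\psi$, we obtain $\psi'(\pi_k(a|s))\in[\psi'(0^+),\beta]$ for all $k$, with $\beta:=\psi'(\bar x)<\infty$. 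Consequently $|G_k(s)|\le \beta-\psi'(0^+)$ uniformly in $k$ and $s\in\tilde\calS$.

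\textbf{Step 2 (contradiction as in Case~1).} Fix $\sigma=\Delta/4$ and take $k\ge T(\Delta/4)$, which is finite by Lemma~\ref{lem:Xiao-sublinear}. Suppose that for some $s\in\tilde\calS$ we have $b^{\pi_{k+1}}_s>0$ for all such $k$. Lemma~\ref{lem:gap-metric-increase}\ref{lem:gap-metric-increase-lower} then gives $G_{k+1}(s)\ge G_k(s)+\eta\Delta/2$, so $G_k(s)\to+\infty$, contradicting Step~1. Hence each $s\in\tilde\calS$ has some finite $k_s$ with $b^{\pi_{k_s}}_s=0$. By the first part of Lemma~\ref{lem:gap-metric-increase}\ref{lem:termination-condition}, $b^{\pi_t}_s=0$ for all $t\ge k_s$. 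Let $T_{\mathrm{ter}}:=\max_{s\in\tilde\calS}k_s$, which is finite because $\calS$ is finite. Then $b^{\pi_{T_{\mathrm{ter}}}}_s=0$ for all $s\in\tilde\calS$, and the second part of Lemma~\ref{lem:gap-metric-increase}\ref{lem:termination-condition} yields $\pi_t=\pi_{T_{\mathrm{ter}}}\in\Pi^*$ for all $t\ge T_{\mathrm{ter}}$. As a bonus, this gives the explicit bound $T_{\mathrm{ter}}\le T(\Delta/4)+\lceil 4(\beta-\psi'(0^+))/(\eta\Delta)\rceil+1$, although $\beta$ depends on the trajectory.

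\textbf{Main obstacle.} The delicate point is Step~1: the uniform bound $\beta$ is not available a priori and depends on the limit $\pi_\infty$ staying away from $1$. That property is exactly what \eqref{eq:policy-convergence-to-interior} supplies for $\calS_{>1}$, and it is where the hypothesis $\calS_{>1}=\calS$ is used. Any state in $\calS_{=1}$ would force $\pi_k(a^*|s)\to1$ and $\psi'\to+\infty$, which breaks the argument. Once Step~1 holds, Step~2 is routine.
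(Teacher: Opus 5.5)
Your proof is correct and follows the paper's argument. You obtain a uniform upper bound $\beta$ on $\psi'(\pi_k(a|s))$ along the trajectory, then run the Case~1-type contradiction via Lemma~\ref{lem:gap-metric-increase}~\ref{lem:gap-metric-increase-lower} and conclude termination via Lemma~\ref{lem:gap-metric-increase}~\ref{lem:termination-condition}. The only variation is in how $\beta$ is obtained: you use the fact that $\pi_\infty$ has all coordinates strictly below $1$ (the interior property in \eqref{eq:policy-convergence-to-interior}), whereas the paper's Lemma~\ref{lem:case-3-psi-prime-bounded} uses the bounded score gap $\sup_k M_k(s)<\infty$ from \eqref{eq:bounded-M} together with $\min_{a^*}\pi_k(a^*|s)\le 1/2$, which is the same estimate from which that interior property is derived.
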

In contrast, for MDP instances where at least one state admits a unique optimal action, i.e., $\calS_{=1} \neq \emptyset$, the non-differentiability of $\psi$ at $1$ ensures that $\pi_k(a^*|s) < 1$ for any $s\in\calS_{=1}$. Consequently, the suboptimal probability satisfies $b_s^{\pi_k}>0$ for all finite iterations $k$, so finite-time termination cannot occur. However, this allows the gap-growth estimates in Lemma~\ref{lem:gap-metric-increase} to be applied, leading to local two-sided value error bounds analogous to those in Case~2. We formalize these bounds in the following theorem, with the  proof deferred to Section~\ref{sec:pf:thm:case-3-local-convergence}.

\begin{theorem}[Local bounds for value error, Case~3]  \label{thm:case-3-local-convergence}
    Let $\{\pi_k\}_{k\geq 0}$ and $\{ V^{\pi_k} \}_{k\geq 0}$ be the policy and value sequences  generated by PMD with a constant step size $\eta>0$. If $\psi^\prime(0^+) > -\infty$, $\psi^\prime(1^-) = +\infty$, and there exists at least one state $s\in\calS$ with a unique optimal action \textup{(}i.e., $\calS_{=1} \neq \emptyset$\textup{)}, then for any $\sigma \in (0, \, \Delta/2)$, there exist finite iterations $T_1(\sigma)$ and $T_2(\sigma)$ such that
    \begin{alignat*}{2}
        &\forall\, k \geq T_1(\sigma): \quad && \big\| V^* - V^{\pi_k} \big\|_\infty \leq \frac{1}{(1-\gamma)^2} \chi^{-1}\big( k \eta \, (\Delta_{=1} - 2\sigma) + C_1(\sigma) \big), \\
        &\forall\, k \geq T_2(\sigma): \quad && \big\| V^* - V^{\pi_k} \big\|_\infty \geq \Delta \, \chi^{-1} \big( k\eta \, (\Delta_{=1} + 2\sigma) + C_2(\sigma) \big),
    \end{alignat*}
    where $C_1(\sigma)$ and $C_2(\sigma)$ are finite constants, and $\Delta_{=1} := \min_{s\in\calS_{=1}} \, \Delta_s$. Here, $\chi(x) := \psi^\prime(1-x)$ is a continuous and strictly decreasing function mapping $(0,1]$ onto $[\psi^\prime(0^+), \, +\infty)$, and $\chi^{-1}$ denotes its inverse.
\end{theorem}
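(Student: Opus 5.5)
The plan is to split the value error by state type. States in $\calS_{>1}$ have suboptimal probabilities that decay fast, because by Theorem~\ref{thm:case-3-policy-convergence} their optimal coordinates converge to interior points of $\dom\psi$. States in $\calS_{=1}$ are governed by the $\psi'$-gap growth of Lemma~\ref{lem:gap-metric-increase}, in which the blow-up of $\psi'$ at $1$ is encoded through $\chi$.

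\emph{Step 1: upper bound on $\calS_{=1}$.} Fix $\sigma\in(0,\Delta/2)$ and $k\ge T(\sigma)$, and take $s\in\calS_{=1}$ with unique optimal action $a^*$. Since $1\notin\interior\dom\psi$, Lemma~\ref{lem:KKT-cond} gives $\pi_k(a^*|s)<1$, so $b^{\pi_k}_s>0$ for all $k$. Lemma~\ref{lem:gap-metric-increase}\ref{lem:gap-metric-increase-lower} then applies at every step and gives $G_k(s)\ge G_{T(\sigma)}(s)+(k-T(\sigma))\eta(\Delta_s-2\sigma)$. Here $G_k(s)=\psi'(1-b^{\pi_k}_s)-\max_{a'}\psi'(\pi_k(a'|s))\le \chi(b^{\pi_k}_s)-\psi'(0^+)$, because $\psi'(\pi_k(a'|s))\ge\psi'(0^+)>-\infty$. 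Hence $\chi(b^{\pi_k}_s)\ge k\eta(\Delta_{=1}-2\sigma)+\text{const}$, and since $\chi$ is decreasing, $b^{\pi_k}_s\le\chi^{-1}(k\eta(\Delta_{=1}-2\sigma)+C)$.

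\emph{Step 2: control of $\calS_{>1}$ and $\tilde\calS\setminus\calS_{=1}$.} For $s\in\calS_{>1}\cap\tilde\calS$, Theorem~\ref{thm:case-3-policy-convergence} and \eqref{eq:policy-convergence-to-interior} give $\pi_k(a^*|s)\to\pi_\infty(a^*|s)\in\interior\dom\psi$. Hence $\max_{a^*}\psi'(\pi_k(a^*|s))$ is eventually bounded by some $\beta$, so $G_k(s)\le\beta-\psi'(0^+)$. Whenever $b^{\pi_{k+1}}_s>0$, $G_k(s)$ grows linearly, so the Case~1 contradiction argument shows that $b^{\pi_k}_s=0$ after finitely many steps. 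By Lemma~\ref{lem:gap-metric-increase}\ref{lem:termination-condition} it then stays zero, and states in $\calS\setminus\tilde\calS$ have $b_s=0$ trivially. Choose $T_1(\sigma)$ beyond all these termination times. The upper bound then follows from the right inequality of Lemma~\ref{lem:sub-optimal-probabilities}: $\E_{s'\sim d^{\pi_k}_s}[b^{\pi_k}_{s'}]\le\max_{s'\in\calS_{=1}}b^{\pi_k}_{s'}$, and $\Delta_{=1}$ replaces $\Delta_s$ because only $\calS_{=1}$ states contribute. Adjusting constants gives $C_1(\sigma)$.

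\emph{Step 3: lower bound.} Pick $s\in\calS_{=1}$ with $\Delta_s=\Delta_{=1}$ and a suboptimal $a'$ attaining $\Delta_s=V^*(s)-Q^*(s,a')$. Lemma~\ref{lem:gap-metric-increase}\ref{lem:gap-metric-increase-upper} applies because $b^{\pi_{k+1}}_s<1$ eventually (since $b\to0$). It gives $G_k(s,a')\le G_{T}(s,a')+(k-T)\eta(\Delta_{=1}+2\sigma)$. With a unique optimal action, $G_k(s,a')=\psi'(\pi_k(a^*|s))-\psi'(\pi_k(a'|s))$, and $\psi'(\pi_k(a'|s))\le\psi'(b^{\pi_k}_s)\to\psi'(0^+)$ stays bounded above. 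Therefore $\chi(b^{\pi_k}_s)=\psi'(1-b^{\pi_k}_s)\le k\eta(\Delta_{=1}+2\sigma)+C$, i.e.\ $b^{\pi_k}_s\ge\chi^{-1}(\cdot)$. The left inequality of Lemma~\ref{lem:sub-optimal-probabilities} then yields the lower bound.

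The main obstacle is Step~2. The contradiction argument needs a \emph{uniform} upper bound $\beta$ on the optimal dual coordinates of $\calS_{>1}$ states, and this relies on Theorem~\ref{thm:case-3-policy-convergence} producing limits strictly inside $\dom\psi$. A second subtlety is that Lemma~\ref{lem:sub-optimal-probabilities} averages over visited states, so the $\calS_{>1}$ contributions must vanish exactly, not merely decay. I expect this to be handled through finite-time vanishing combined with Lemma~\ref{lem:gap-metric-increase}\ref{lem:termination-condition}.
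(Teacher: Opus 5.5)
Your proposal is correct and follows essentially the same route as the paper's proof. The upper bound comes from $\psi'$-gap growth on $\calS_{=1}$ states converted through $\chi^{-1}$, together with finite-time vanishing of $b^{\pi_k}_s$ on $\calS_{>1}$ via the Case~1-style contradiction. The lower bound comes from Lemma~\ref{lem:gap-metric-increase}\ref{lem:gap-metric-increase-upper} plus $\chi^{-1}$.

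The only minor difference is how you get the bound $\beta$ on $\psi'$ over $\calS_{>1}$. You take it directly from the interior limit in Theorem~\ref{thm:case-3-policy-convergence} and continuity of $\psi'$. The paper instead invokes Lemma~\ref{lem:case-3-psi-prime-bounded}, i.e., boundedness of $M_k(s)$. Since an eventual bound suffices for the contradiction, your variant is valid.
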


For Case~3, we can further establish a local policy convergence rate under the additional regularity condition that $\psi$ is twice continuously differentiable on $\interior\,\dom\,\psi$ and satisfies $0<\psi''(x)<+\infty$ for all $x\in\interior\,\dom\,\psi$. The proof is deferred to Section~\ref{sec:pf:thm:case-3-policy-convergence-rate}.

\begin{theorem}[Policy convergence rate for Case~3]  \label{thm:case-3-policy-convergence-rate}
    Let $\{ \pi_k \}_{k\geq 0}$ be the sequence of policies generated by PMD with a constant step size $\eta>0$. Let $\pi_\infty$ denote the limiting policy under the conditions  $\psi^\prime(0^+) > -\infty$ and $\psi^\prime(1^-) = +\infty$. Assume $\mathcal{S}_{=1}\neq \emptyset$. If $\psi$ is twice continuously differentiable on $\interior\, \dom \, \psi$ and $0 < \psi^{\prime\prime}(x) < +\infty$ for all $x\in\interior\, \dom \, \psi$, then
    \begin{enumerate}[label=\textup{(\alph*)}]
        \item \label{thm:case-3-policy-convergence-rate-optimal=1} for any $s\in\calS_{=1}$ and $\sigma\in(0, \, \Delta/2)$, 
        \begin{align*}
            \forall\, k \geq T_1(\sigma): \quad \| \pi_k(\cdot|s) - \pi_\infty(\cdot|s)\|_\infty \leq b^{\pi_k}_s \leq \chi^{-1}\big( k \eta \, (\Delta_{=1} - 2\sigma) + C_1(\sigma) \big),
        \end{align*}
        where $T_1(\sigma)$ and $C_1(\sigma)$ are the constants defined in Theorem~\ref{thm:case-3-local-convergence};
        \item \label{thm:case-3-policy-convergence-rate-optimal>1} for any $s\in\calS_{>1}$ and $\sigma \in (0, \, \Delta/2)$, there exists a finite time $T_3(\sigma)$ such that
        \begin{align*}
            \forall\, k \geq T_3(\sigma): \quad \| \pi_k(\cdot|s) - \pi_\infty(\cdot|s) \|_\infty &\leq \frac{U_s}{L_s} \cdot b^{\pi_k}_s + \frac{2\eta\,U_s}{L_s^2}(1+|\calA|) \sum_{t=k}^\infty \big\| V^* - V^{\pi_t} \big\|_\infty,
        \end{align*}
        where $U_s$ and $L_s$ are strictly positive finite constants depending on the limiting policy $\pi_\infty(\cdot|s)$ and the second derivative $\psi^{\prime\prime}$.
    \end{enumerate}
\end{theorem}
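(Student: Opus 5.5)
Part (a) is immediate. For $s\in\calS_{=1}$ with unique optimal action $a^*$, Lemma~\ref{lem:summable-error-gives-policy-convergence} (valid here because Theorem~\ref{thm:case-3-policy-convergence} gives summability) yields $\pi_\infty(a^*|s)=1$ and $\pi_\infty(a'|s)=0$ otherwise. Hence
\[
|\pi_k(a^*|s)-1|=b^{\pi_k}_s \quad\text{and}\quad \pi_k(a'|s)\le b^{\pi_k}_s ,
\]
so $\|\pi_k(\cdot|s)-\pi_\infty(\cdot|s)\|_\infty\le b^{\pi_k}_s$. The second inequality is the bound on $b^{\pi_k}_s$ obtained inside the proof of Theorem~\ref{thm:case-3-local-convergence}. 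That proof uses Lemma~\ref{lem:gap-metric-increase}\ref{lem:gap-metric-increase-lower}, which applies since $b^{\pi_k}_s>0$ always holds when $s\in\calS_{=1}$. It makes $G_k(s)\ge G_{T_1}(s)+(k-T_1)\eta(\Delta_{=1}-2\sigma)$, and therefore $\chi(b^{\pi_k}_s)=\psi'(\pi_k(a^*|s))\ge$ that quantity plus $\psi'(0^+)$. Inverting the decreasing map $\chi$ gives the stated bound with the same constants $T_1(\sigma)$ and $C_1(\sigma)$.

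For part (b), fix $s\in\calS_{>1}$. By Lemma~\ref{lem:summable-error-gives-policy-convergence}, $\pi_\infty(a^*|s)\in\interior\,\dom\,\psi$ for $a^*\in\calA^*_s$, and in Case~3 this means $\pi_\infty(a^*|s)\in(0,1)$. I will choose a compact interval $[\ell,u]\subset(0,1)$ containing all $\pi_\infty(a^*|s)$ in its interior, and set
\[
L_s:=\min_{[\ell,u]}\psi'' ,\qquad U_s:=\max_{[\ell,u]}\psi'' .
\]
Both are positive and finite because $\psi''$ is continuous and $0<\psi''<\infty$ there. Choose $T_3(\sigma)\ge T(\sigma)$ such that $\pi_t(a^*|s)\in[\ell,u]$ for all $t\ge T_3$. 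Since $\pi_t(a^*|s)>0$ for these $t$, the multipliers $\lambda_s(a^*)$ in Lemma~\ref{lem:KKT-cond} vanish. For two optimal actions $a_1,a_2$ this gives
\[
\psi'(\pi_{t+1}(a_1|s))-\psi'(\pi_{t+1}(a_2|s))=\psi'(\pi_t(a_1|s))-\psi'(\pi_t(a_2|s))+\eta\bigl(Q^{\pi_t}(s,a_1)-Q^{\pi_t}(s,a_2)\bigr).
\]
Because $Q^*(s,a_1)=Q^*(s,a_2)$, \eqref{eq:Q-V-A-bounded} bounds the increment by $2\eta\|V^*-V^{\pi_t}\|_\infty$. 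Summing from $t=k$ to $\infty$ yields
\[
\bigl|\Phi_k(a_1,a_2)-\Phi_\infty(a_1,a_2)\bigr|\le 2\eta\sum_{t\ge k}\|V^*-V^{\pi_t}\|_\infty ,
\]
where $\Phi_k(a_1,a_2)$ denotes the dual difference $\psi'(\pi_k(a_1|s))-\psi'(\pi_k(a_2|s))$.

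Next I convert dual differences into primal ones. Write $x_a:=\pi_k(a|s)-\pi_\infty(a|s)$ for $a\in\calA^*_s$. By the mean value theorem, $\psi'(\pi_k(a|s))-\psi'(\pi_\infty(a|s))=\xi_a x_a$ with $\xi_a\in[L_s,U_s]$, so $|\xi_{a_1}x_{a_1}-\xi_{a_2}x_{a_2}|\le 2\eta R_k$, where $R_k:=\sum_{t\ge k}\|V^*-V^{\pi_t}\|_\infty$. The optimal coordinates must also satisfy the constraint $\sum_{a\in\calA^*_s}x_a=-b^{\pi_k}_s$, since $\pi_\infty$ puts full mass on $\calA^*_s$.

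Write $\xi_a x_a=m+e_a$ with $|e_a|\le 2\eta R_k$. Then
\[
\sum_a \frac{m+e_a}{\xi_a}=-b^{\pi_k}_s ,
\]
which forces $|m|\le U_s\bigl(b^{\pi_k}_s/|\calA^*_s|+2\eta R_k/L_s\bigr)$ up to harmless factors. Consequently
\[
|x_a|\le \frac{|m|+2\eta R_k}{L_s}\lesssim \frac{U_s}{L_s}\,b^{\pi_k}_s+\frac{2\eta U_s}{L_s^2}(1+|\calA|)R_k .
\]
Suboptimal coordinates satisfy $|x_{a'}|=\pi_k(a'|s)\le b^{\pi_k}_s$, which the bound above dominates since $U_s\ge L_s$.

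The main obstacle is this last algebraic step: keeping the constants in exactly the stated form, $U_s/L_s$ and $(2\eta U_s/L_s^2)(1+|\calA|)$. The factor $(1+|\calA|)$ presumably comes from absorbing $|\calA^*_s|\le|\calA|$ error terms in the averaging. I would first try the sharpest weighted-average estimate and then loosen it to match the statement.
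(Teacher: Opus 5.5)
Part (a) is correct and matches the paper's argument. Part (b), however, fails at its first step. In Case~3 we have $0\in\interior\,\dom\,\psi$. So the conclusion $\pi_\infty(a^*|s)\in\interior\,\dom\,\psi$ of Lemma~\ref{lem:summable-error-gives-policy-convergence} only gives $\pi_\infty(a^*|s)\in[0,1)$, not $(0,1)$. You have imported the Case~2/4 situation, where $0\notin\interior\,\dom\,\psi$.

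An optimal action at $s\in\calS_{>1}$ can therefore have zero limiting mass. For such an $a^*$, no interval $[\ell,u]\subset(0,1)$ contains $\pi_\infty(a^*|s)$. More seriously, $\pi_t(a^*|s)$ may equal $0$ exactly at many (possibly all large) $t$, because in Case~3 the iterates can hit the boundary of the simplex. At such steps the multiplier $\lambda_s(a^*)$ of Lemma~\ref{lem:KKT-cond} need not vanish, and the update is only the inequality $\psi'(\pi_{t+1}(a^*|s))\ge\psi'(\pi_t(a^*|s))+\eta\,Q^{\pi_t}(s,a^*)+\nu_s$. Telescoping $\Phi_t(a_1,a^*)$ then carries the accumulated multipliers, which nothing in your argument controls. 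Hence the two-sided bound $|\Phi_k-\Phi_\infty|\le 2\eta R_k$ is unjustified for pairs involving $a^*$. So is your averaging step, which needs $|e_a|\le 2\eta R_k$ for every $a\in\calA^*_s$. Note that your all-positive computation produces the factor $2$; the $(1+|\calA|)$ in the statement comes from the case you missed.

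The missing idea is the paper's split $\calA^*_s=\calB^*_s\cup\calC^*_s$, according to whether $\pi_\infty(a^*|s)=0$ or $\pi_\infty(a^*|s)>0$. Here $|\calC^*_s|\ge 2$, since every limit is $<1$. It is paired with an interval $\mathcal{I}_s$ reaching slightly below $0$, which is legitimate because $0$ is interior.

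For $a^*\in\calB^*_s$ only an upper bound is needed, and the one-sided inequality suffices:
\begin{itemize}
\item For each $k$ pick $a^*_k\in\calC^*_s$ with $\pi_k(a^*_k|s)\le\pi_\infty(a^*_k|s)$. This is possible because $\sum_{a\in\calC^*_s}\pi_\infty(a|s)=1$.
\item Combining the exact update of $a^*_k$ with the inequality for $a^*$ gives $\Phi_{t+1}(a^*_k,a^*)\le\Phi_t(a^*_k,a^*)+2\eta\|V^*-V^{\pi_t}\|_\infty$.
\item Telescoping over $t\ge k$ and letting $t\to\infty$ yields $\psi'(\pi_k(a^*|s))\le\psi'(0)+2\eta R_k$. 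Hence, for large $k$, $0\le\pi_k(a^*|s)\le 2\eta R_k/L_s$.
\end{itemize}

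For $a\in\calC^*_s$, the probabilities are eventually positive, so the multipliers vanish and your averaging argument works. The constraint, however, becomes $\sum_{a\in\calC^*_s}x_a=-b^{\pi_k}_s-\sum_{a\in\calB^*_s}\pi_k(a|s)$. The extra term is at most $|\calB^*_s|\cdot 2\eta R_k/L_s$, and it is exactly what yields the constant $(2\eta U_s/L_s^2)(1+|\calA|)$.
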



\begin{example}[Hellinger]  \label{example:Hellinger}
    Consider Hellinger mapping $\psi(x) = -\sqrt{1-x^2}$ with $\dom \, \psi = [-1,1]$. There hold $\psi^\prime(x) = \frac{x}{\sqrt{1-x^2}}$, $\chi(x) = \frac{1-x}{\sqrt{1-(1-x)^2}}$, and thus $\chi^{-1}(y) = \frac{1}{\sqrt{1+y^2} \cdot \big( \sqrt{1+y^2} + y \big)}$. The policy convergence is guaranteed by Theorem~\ref{thm:case-3-policy-convergence}. Furthermore, 
    \begin{itemize}
        \item if $\calS_{>1} = \calS$, then PMD converges to some optimal policy $\pi_\infty \in \Pi^*$ in finite iterations;
        \item if $\calS_{=1} \neq \emptyset$, noticing that $\chi^{-1}(y) \asymp y^{-2}$ as $y \to +\infty$, one obtains that for large enough $k$,
        \begin{align*}
            \bigg( \frac{1}{\eta\, (\Delta_{=1} + 2\sigma) \cdot k} \bigg)^2 \lesssim \big\| V^* - V^{\pi_k} \big\|_\infty \lesssim \bigg( \frac{1}{\eta \, (\Delta_{=1} - 2\sigma) \cdot k} \bigg)^2;
        \end{align*}
       in addition,
       since $\psi^{\prime\prime}(x) = (1-x^2)^{-\nicefrac{3}{2}}$ satisfies  $\psi^{\prime\prime}(x) \in (0, \, +\infty)$ for any $x\in\interior\, \dom \, \psi = (-1,1)$, Theorem~\ref{thm:case-3-policy-convergence-rate} implies that
        \begin{alignat*}{2}
            &\forall\, s\in\calS_{=1}: \quad &&\big\| \pi_k(\cdot|s) - \pi_\infty(\cdot|s) \big\|_\infty \lesssim \bigg( \frac{1}{\eta \, (\Delta_{=1} - 2\sigma) \cdot k} \bigg)^2 , \\
            &\forall\, s\in\calS_{>1}: \quad && \big\| \pi_k(\cdot|s) - \pi_\infty(\cdot|s) \big\|_\infty \lesssim \bigg( \frac{1}{\eta \, (\Delta_{=1} - 2\sigma)^2 \cdot k} \bigg).
        \end{alignat*}
    \end{itemize}
\end{example}

\subsection{Local value bounds and policy convergence for Case~4}
Finally, we consider Case~4, where $\psi$ is non-differentiable at both $0$ and $1$. Consequently, $\dom\,\psi=[0,1]$, and $\psi'$ is well-defined on $\interior\,\dom\,\psi=(0,1)$ with range $(-\infty,+\infty)$. Since $0\in \mathrm{bd}\,\dom\,\psi$, we can use arguments analogous to those in Case~2 to establish local two-sided value error bounds. 
\begin{theorem}[Local bounds for value error, Case~4]  \label{thm:case-4-local-convergence}
    Let $\{\pi_k\}_{k\geq 0}$ and $\{ V^{\pi_k} \}_{k\geq 0}$ be the sequences of policies and value functions generated by PMD with a constant step size $\eta>0$.  If $\psi^\prime(0^+) = -\infty$ and $\psi^\prime(1^-) = +\infty$, for any $\sigma \in (0, \, \Delta/2)$,
    \begin{alignat*}{2}
        &\forall\, k \geq T(\sigma): \quad && \big\| V^* - V^{\pi_k} \big\|_\infty   \leq \frac{|\calA|}{(1-\gamma)^2} \left\{ \varphi^{-1}\Big( \frac{1}{2} \big( k\eta\,(\Delta - 2\sigma) + C_1(\sigma) \big) \Big) + \chi^{-1}\Big( \frac{1}{2} \big( k\eta\,(\Delta - 2\sigma) + C_1(\sigma) \big) \Big) \right\}, \\[.5em]
        &\forall\, k \geq T(\sigma): \quad && \big\| V^* - V^{\pi_k} \big\|_\infty \geq \Delta \, \varphi^{-1} \big( k\eta \, (\Delta + 2\sigma) + C_2(\sigma) \big),
    \end{alignat*}
    where $\varphi(x) := -\psi^\prime(x)$, $\chi(x) := \psi^\prime(1-x)$, and $C_1(\sigma)$, $C_2(\sigma)$ are finite constants. 
\end{theorem}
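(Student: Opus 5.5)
The plan is to run the Case~2 argument through the gap quantities of Lemma~\ref{lem:gap-metric-increase}, adding one step to handle the unbounded upper end $\psi^\prime(1^-)=+\infty$. Since $0\in\mathrm{bd}\,\dom\,\psi$, \eqref{eq:KKT-cond-0-not-diff} gives $\pi_k(a|s)>0$ for every $k$, so $b^{\pi_{k+1}}_s>0$ for every $s\in\tilde\calS$. Lemma~\ref{lem:gap-metric-increase}\ref{lem:gap-metric-increase-lower} then applies at every $k\ge T(\sigma)$, and iterating it gives
\[
G_k(s)\ \ge\ G_{T(\sigma)}(s)+(k-T(\sigma))\,\eta(\Delta-2\sigma)\ =:\ k\eta(\Delta-2\sigma)+C_1(\sigma),
\]
where $C_1(\sigma):=\min_{s\in\tilde\calS}\big(G_{T(\sigma)}(s)-T(\sigma)\eta(\Delta-2\sigma)\big)$ is finite because every $\pi_{T(\sigma)}(a|s)\in(0,1)$.

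The upper bound converts this gap growth into smallness of $b^{\pi_k}_s$. This is where Case~4 differs from Case~2: $\psi^\prime$ is unbounded above, so $G_k(s)$ large does not force the suboptimal $\psi^\prime$ values to be very negative. Fix $s\in\tilde\calS$, a suboptimal $a^\prime$, and set $a^*:=\arg\min_{a\in\calA^*_s}\pi_k(a|s)$. Write $x=\pi_k(a^\prime|s)$ and $y=\pi_k(a^*|s)$. Then
\[
\psi^\prime(y)-\psi^\prime(x)\ \ge\ G_k(s)=:g,
\]
so either $-\psi^\prime(x)\ge g/2$ or $\psi^\prime(y)\ge g/2$.
\begin{itemize}
\item If $-\psi^\prime(x)\ge g/2$, then $x\le\varphi^{-1}(g/2)$.
\item If $\psi^\prime(y)\ge g/2$, then $1-y\le\chi^{-1}(g/2)$. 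Because $x+y\le 1$, this gives $x\le 1-y\le\chi^{-1}(g/2)$.
\end{itemize}
In both cases $x\le\varphi^{-1}(g/2)+\chi^{-1}(g/2)$. Summing over the suboptimal actions gives $b^{\pi_k}_s\le|\calA|\{\varphi^{-1}(g/2)+\chi^{-1}(g/2)\}$. Since $\varphi^{-1}$ and $\chi^{-1}$ are decreasing, we may replace $g$ by its lower bound from the first step. Lemma~\ref{lem:sub-optimal-probabilities} then gives $V^*(s)-V^{\pi_k}(s)\le(1-\gamma)^{-2}\max_{s^\prime}b^{\pi_k}_{s^\prime}$, which is the stated upper bound. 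The only requirement is that the argument of $\varphi^{-1},\chi^{-1}$ lies in their domain $\mathbb{R}$, which holds automatically in Case~4 because both are defined on all of $\mathbb{R}$. So no enlarged $T_1$ is needed, matching the statement's $k\ge T(\sigma)$.

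For the lower bound, pick $s\in\tilde\calS$ with $\Delta_s=\Delta$ and a suboptimal $a^\prime$ achieving $V^*(s)-Q^*(s,a^\prime)=\Delta$. Since $b^{\pi_{k+1}}_s<1$ always holds (optimal actions keep positive mass), Lemma~\ref{lem:gap-metric-increase}\ref{lem:gap-metric-increase-upper} gives
\[
G_k(s,a^\prime)\ \le\ k\eta(\Delta+2\sigma)+C^\prime
\]
for $k\ge T(\sigma)$, with a finite constant $C^\prime$. This step is the main obstacle: the Case~2 argument uses $\max_{a^*}\psi^\prime(\pi_k(a^*|s))\le\psi^\prime(1^-)<\infty$, which fails here. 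I would first bound the maximal optimal coordinate via a lower bound on the remaining probability. Take $a^*$ attaining the max, so $1-\pi_k(a^*|s)\ge\pi_k(a^\prime|s)$ and hence
\[
\psi^\prime(\pi_k(a^*|s))\ \le\ \psi^\prime(1-\pi_k(a^\prime|s))=\chi(\pi_k(a^\prime|s)).
\]
This yields $\chi(x)+\varphi(x)\le k\eta(\Delta+2\sigma)+C^\prime$ with $x=\pi_k(a^\prime|s)$. Near $0$, $\chi(x)\to\psi^\prime(1^-)$ is unbounded, which is troublesome. However, $x\to0$ gives $1-x\to1$, and in general $\chi(x)=\psi^\prime(1-x)\ge\psi^\prime(1/2)$ once $x\le1/2$. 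Since $b^{\pi_k}_s\to0$, we have $x\le 1/2$ eventually, so $\varphi(x)\le k\eta(\Delta+2\sigma)+C^\prime-\psi^\prime(1/2)$. Absorbing constants and the finitely many early iterates (where $x$ is bounded away from $0$) into $C_2(\sigma)$, we get $x\ge\varphi^{-1}(k\eta(\Delta+2\sigma)+C_2(\sigma))$. Then $\Delta\, b^{\pi_k}_s\ge\Delta x$ combined with Lemma~\ref{lem:sub-optimal-probabilities} finishes the proof.
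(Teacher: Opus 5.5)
Your upper bound is correct and is essentially the paper's argument. The paper splits once per state on $\min_{a^*}\psi^\prime(\pi_k(a^*|s))$ versus $\max_{a^\prime}\psi^\prime(\pi_k(a^\prime|s))$, and your per-action split is equivalent.

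The lower bound, however, has a genuine gap. Take $a^*$ to be the maximizing optimal action and $x=\pi_k(a^\prime|s)$. Lemma~\ref{lem:gap-metric-increase}\ref{lem:gap-metric-increase-upper} gives
\[
\psi^\prime(\pi_k(a^*|s))+\varphi(x)=G_k(s,a^\prime)\le R_k:=k\eta(\Delta+2\sigma)+C^\prime ,
\]
so to bound $\varphi(x)$ from above you need a \emph{lower} bound on $\psi^\prime(\pi_k(a^*|s))$. Your inequality $\psi^\prime(\pi_k(a^*|s))\le\chi(x)$ points the wrong way. Combined with $G_k(s,a^\prime)\le R_k$ it yields nothing, and in particular not $\chi(x)+\varphi(x)\le R_k$.

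That intermediate claim is in fact false in general. Take the Fermi--Dirac entropy (so $\chi=\varphi$) and a single state with $\gamma=0$. Give it two optimal actions of reward $1$ and one suboptimal action of reward $0$ (so $\Delta=1$), and start from the uniform $\pi_0$. Then $Q^{\pi_k}=r$, the two optimal probabilities stay equal and tend to $1/2$, and $G_k(s,a^\prime)=k\eta$ exactly. Hence $\varphi(x_k)=k\eta+o(1)$ and $\chi(x_k)+\varphi(x_k)=2k\eta+o(1)$. This exceeds $k\eta(1+2\sigma)+C^\prime$ for all large $k$, because $\sigma<1/2$. Your subsequent use of $\chi(x)\ge\psi^\prime(1/2)$ therefore rests on an invalid step.

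The missing ingredient is elementary, and it is exactly what the paper uses. For $k\ge T(\sigma)$, Lemma~\ref{lem:sub-optimal-probabilities} gives $b^{\pi_k}_s\le\sigma/\Delta<1/2$. Hence $\max_{a^*\in\calA^*_s}\pi_k(a^*|s)\ge(1-b^{\pi_k}_s)/|\calA^*_s|\ge 1/(2|\calA|)$, and so $\psi^\prime(\pi_k(a^*|s))\ge\psi^\prime(1/(2|\calA|))$. This gives $\varphi(x)\le R_k-\psi^\prime(1/(2|\calA|))$, i.e.\ $b^{\pi_k}_s\ge x\ge\varphi^{-1}\big(k\eta(\Delta+2\sigma)+C_2(\sigma)\big)$ for every $k\ge T(\sigma)$, with no early iterates to absorb.

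You also misdiagnosed the obstacle: the Case~2 lower bound never used $\max_{a^*}\psi^\prime(\pi_k(a^*|s))\le\psi^\prime(1^-)$. Finiteness of $\psi^\prime(1^-)$ enters the Case~2 \emph{upper} bound, through the minimizing optimal action. It enters the Case~2 lower bound only through the domain $[-\psi^\prime(1^-),+\infty)$ of $\varphi^{-1}$. In Case~4 that domain is all of $\mathbb{R}$, so the lower bound is no harder than in Case~2, and the blow-up of $\psi^\prime$ at $1$ plays no role there.
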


The proof of Theorem~\ref{thm:case-4-local-convergence} is deferred to Section~\ref{sec:pf:thm:case-4-local-convergence}. 
By combining the local upper bound in Theorem~\ref{thm:case-4-local-convergence} with the integral discrimination argument from Lemma~\ref{lem:integral-discrimination}, we can obtain the summability of the value errors.
Then Lemma~\ref{lem:summable-error-gives-policy-convergence} implies policy convergence, as stated in the following theorem. The proof of this theorem is presented in Section~\ref{sec:pf:thm:case-4-policy-convergence}.

\begin{theorem}[Policy convergence for Case~4]  \label{thm:case-4-policy-convergence}
    Let $\{\pi_k\}_{k\geq 0}$ and $\{ V^{\pi_k} \}_{k\geq 0}$ be the policy and value sequences  generated by PMD with a constant step size $\eta>0$. If $\psi^\prime(0^+) = -\infty$ and $\psi^\prime(1^-) = +\infty$, the sequence of value errors is summable, namely, $\sum_{k=0}^\infty \| V^* - V^{\pi_k} \|_\infty < \infty$. Consequently, the policy sequence converges to an optimal policy $\pi_\infty$, i.e.,
    \begin{align*}
        \lim_{k \to \infty} \pi_k = \pi_\infty \in \Pi^*.
    \end{align*}
    Moreover, the limiting policy $\pi_\infty$ satisfies the properties in~\eqref{eq:policy-convergence-to-interior}.
\end{theorem}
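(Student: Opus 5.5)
The plan is to reduce the claim to Lemma~\ref{lem:summable-error-gives-policy-convergence}. Once $\sum_{k\ge0}\|V^*-V^{\pi_k}\|_\infty<\infty$ is shown, that lemma gives both the existence of $\pi_\infty=\lim_k\pi_k\in\Pi^*$ and the properties in~\eqref{eq:policy-convergence-to-interior}. So the whole task is summability. Fix some $\sigma\in(0,\Delta/2)$, say $\sigma=\Delta/4$. The finitely many terms with $k<T(\sigma)$ are each at most $1/(1-\gamma)$, so only the tail matters. For $k\ge T(\sigma)$ we use the upper bound of Theorem~\ref{thm:case-4-local-convergence}:
\begin{align*}
\|V^*-V^{\pi_k}\|_\infty\le \frac{|\calA|}{(1-\gamma)^2}\Big\{\varphi^{-1}\big(c_0+c_1k\big)+\chi^{-1}\big(c_0+c_1k\big)\Big\},
\end{align*}
where $c_1=\tfrac12\eta(\Delta-2\sigma)>0$ and $c_0$ absorbs $\tfrac12 C_1(\sigma)$ together with the shift from re-indexing $k\mapsto k-T(\sigma)$.

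Next we apply Lemma~\ref{lem:integral-discrimination} separately to $f=\varphi$ and $f=\chi$ on $(0,\epsilon]$ for a small $\epsilon\in(0,1)$. In Case~4, $\psi'$ is continuous and strictly increasing on $(0,1)$ with $\psi'(0^+)=-\infty$ and $\psi'(1^-)=+\infty$. Hence both $\varphi(x)=-\psi'(x)$ and $\chi(x)=\psi'(1-x)$ are continuous and strictly decreasing on $(0,\epsilon]$, finite at $\epsilon$, and tend to $+\infty$ as $x\downarrow0$. They are also nonnegative once $\epsilon$ is small enough.

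The remaining hypothesis is integrability near $0$, and this is where the Legendre structure enters. We have $\int_t^\epsilon\varphi(x)\,dx=\psi(t)-\psi(\epsilon)$. Since $\psi$ is closed (lower semicontinuous), convex and finite at $0$ by Assumption~\ref{ass:psi-Legendre}\ref{ass:psi-Legendre:dom}, it is continuous along the segment $[0,\epsilon]$, so $\psi(t)\to\psi(0)<\infty$. Symmetrically, $\int_t^\epsilon\chi(x)\,dx=\psi(1-t)-\psi(1-\epsilon)\to\psi(1)-\psi(1-\epsilon)<\infty$, using finiteness of $\psi(1)$ and continuity of a closed convex function on a segment of its domain.

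The one technical point is the constant requirement $c_0\ge f(\epsilon)$ and the fact that the arguments of $\varphi^{-1},\chi^{-1}$ must lie in the range where the inverse takes values in $(0,\epsilon]$. I would handle this by discarding finitely many more initial terms. Since $c_0+c_1k\to\infty$, for $k$ large the argument exceeds $\max\{\varphi(\epsilon),\chi(\epsilon)\}$, and we re-index from there, which just redefines $c_0$. The inverses on $(0,1]$ and on $(0,\epsilon]$ agree on that range. Lemma~\ref{lem:integral-discrimination} then gives summability of both series, hence of the value errors, and Lemma~\ref{lem:summable-error-gives-policy-convergence} concludes the proof.
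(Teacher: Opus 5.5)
Your proposal is correct and follows the paper's proof essentially step for step. Like the paper, you fix $\sigma=\Delta/4$, take the Case~4 upper bound from Theorem~\ref{thm:case-4-local-convergence}, and apply Lemma~\ref{lem:integral-discrimination} separately to $\varphi$ and $\chi$, with integrability coming from the finiteness of $\psi(0)$ and $\psi(1)$. You then discard finitely many initial terms to meet the $c_0\ge f(\epsilon)$ requirement and conclude via Lemma~\ref{lem:summable-error-gives-policy-convergence}.
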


The local policy convergence rate in Theorem~\ref{thm:case-2-policy-convergence-rate} for Case~2 naturally extends to Case~4. The resulting bound is stated below. 

\begin{theorem}[Policy convergence rate for Case~4]  \label{thm:case-4-policy-convergence-rate}
    Let $\{ \pi_k \}_{k\geq 0}$ be the policy sequence generated by PMD with a constant step size $\eta>0$. Let $\pi_\infty$ denote the limiting policy under the conditions $\psi^\prime(0^+) = -\infty$ and $\psi^\prime(1^-) = +\infty$. If $\psi$ is twice continuously differentiable on $\interior\, \dom \, \psi$ and $0 < \psi^{\prime\prime}(x) < +\infty$ for all $x\in\interior\, \dom \, \psi = (0,1)$, then
    \begin{enumerate}[label=\textup{(\alph*)}]
        \item \label{thm:case-4-policy-convergence-rate-optimal=1} for any $s\in\calS_{=1}$ and $\sigma\in(0, \, \Delta/2)$, 
        \begin{align*}
            \forall\, k \geq T(\sigma): \quad \| \pi_k(\cdot|s) - \pi_\infty(\cdot|s)\|_\infty \leq b^{\pi_k}_s \leq |\calA| \cdot &\Big\{ \varphi^{-1}\Big( k \cdot \frac{\eta\,(\Delta - 2\sigma)}{2} + \frac{C_1(\sigma)}{2} \Big) \\
            &+ \chi^{-1}\Big( k \cdot \frac{\eta\,(\Delta - 2\sigma)}{2} + \frac{C_1(\sigma)}{2} \Big) \Big\},
        \end{align*}
        where $C_1(\sigma)$ is the constant defined in Theorem~\ref{thm:case-4-local-convergence};
        \item \label{thm:case-4-policy-convergence-rate-optimal>1} for any $s\in\calS_{>1}$ and $\sigma \in (0, \, \Delta/2)$, there exists a finite iteration $T_3(\sigma)$ such that
        \begin{align*}
            \forall\, k \geq T_3(\sigma): \quad \| \pi_k(\cdot|s) - \pi_\infty(\cdot|s) \|_\infty &\leq \frac{U_s}{L_s} \cdot b^{\pi_k}_s + \frac{2\eta\, U_s}{L_s^2} \sum_{t=k}^\infty \big\| V^* - V^{\pi_t} \big\|_\infty,
        \end{align*}
        where $U_s$ and $L_s$ are strictly positive finite constants depending on the limiting policy $\pi_\infty(\cdot|s)$ and the second derivative $\psi^{\prime\prime}$.
    \end{enumerate}
\end{theorem}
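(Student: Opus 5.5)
Part (a) should come directly from Theorem~\ref{thm:case-4-local-convergence}. For $s\in\calS_{=1}$, Theorem~\ref{thm:case-4-policy-convergence} gives $\pi_\infty(a^*|s)=1$ and $\pi_\infty(a'|s)=0$ for $a'\neq a^*$. Hence $|\pi_k(a'|s)-\pi_\infty(a'|s)|=\pi_k(a'|s)\le b^{\pi_k}_s$ and $|\pi_k(a^*|s)-1|=b^{\pi_k}_s$, which gives the first inequality.

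For the second inequality I would reuse the intermediate estimate inside the proof of the Case~4 upper bound. From Lemma~\ref{lem:gap-metric-increase}\ref{lem:gap-metric-increase-lower} (valid since $b^{\pi_k}_s>0$ always, because $0\notin\interior\dom\psi$), we get $G_k(s)\ge k\eta(\Delta-2\sigma)+C_1(\sigma)$ for $k\ge T(\sigma)$. Now write $G_k(s)=\psi'(\pi_k(a^*|s))-\max_{a'}\psi'(\pi_k(a'|s))$. At least one of $\psi'(\pi_k(a^*|s))\ge G_k/2$ or $-\psi'(\pi_k(a'|s))\ge G_k/2$ (for every $a'$) must hold. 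In the first case $\chi(1-\pi_k(a^*|s))\ge G_k/2$, so $b^{\pi_k}_s=1-\pi_k(a^*|s)\le\chi^{-1}(G_k/2)$. In the second case $\varphi(\pi_k(a'|s))\ge G_k/2$, so each $\pi_k(a'|s)\le\varphi^{-1}(G_k/2)$ and $b^{\pi_k}_s\le|\calA|\varphi^{-1}(G_k/2)$. Monotonicity of $\varphi^{-1},\chi^{-1}$ then yields the stated bound. This is exactly the per-state bound that produced the value-error bound in Theorem~\ref{thm:case-4-local-convergence}, so I would simply cite it.

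Part (b) would follow the Case~2 argument of Theorem~\ref{thm:case-2-policy-convergence-rate}\ref{thm:case-2-policy-convergence-rate-optimal>1} essentially verbatim. For $s\in\calS_{>1}$ the limit $\pi_\infty(a^*|s)\in(0,1)$ for all $a^*\in\calA^*_s$, by \eqref{eq:policy-convergence-to-interior}. So there is a compact interval $[\underline x,\bar x]\subset(0,1)$ containing $\pi_k(a^*|s)$ for $k\ge T_3(\sigma)$, and on it $L_s\le\psi''\le U_s$ by continuity and positivity of $\psi''$. Since no action has probability $0$, \eqref{eq:KKT-cond-0-not-diff} applies. Summing $\psi'(\pi_{t+1}(a^*|s))-\psi'(\pi_t(a^*|s))=\eta Q^{\pi_t}(s,a^*)+\nu_{s,t}$ and comparing two optimal actions, where $Q^{\pi_t}(s,a^*)$ differs from $V^*(s)$ by at most $\|V^*-V^{\pi_t}\|_\infty$, shows that the differences $\psi'(\pi_t(a^*|s))-\psi'(\pi_t(\tilde a^*|s))$ move by at most $2\eta\|V^*-V^{\pi_t}\|_\infty$ per step. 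They are therefore within $2\eta\sum_{t\ge k}\|V^*-V^{\pi_t}\|_\infty$ of their limits. Using the mean value theorem with constants $L_s,U_s$ and the constraint $\sum_{a^*}\pi_k(a^*|s)=1-b^{\pi_k}_s$, which pins down the common shift, we convert this into the displayed bound on $|\pi_k(a^*|s)-\pi_\infty(a^*|s)|$. Suboptimal coordinates are bounded by $b^{\pi_k}_s$ directly.

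The main obstacle is that step: handling the unknown normalizer $\nu_{s,t}$ by working with differences of $\psi'$ between optimal actions, and then inverting the normalization. Since the argument uses only interior behavior near $\pi_\infty(\cdot|s)$ and no boundary property at $1$ (optimal coordinates stay away from $1$ when $|\calA^*_s|>1$), it should transfer unchanged from Case~2.
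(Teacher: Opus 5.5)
Your proposal is correct and follows the paper's route: part (a) combines the uniqueness of $\pi_\infty(\cdot|s)$ on $\calS_{=1}$ with the per-state bound \eqref{eq:Case-4-bound-for-b} from the proof of Theorem~\ref{thm:case-4-local-convergence}, which you re-derive correctly by splitting $G_k(s)$ in half. Part (b) is the telescoping and two-sided Lipschitz argument of Theorem~\ref{thm:case-2-policy-convergence-rate} carried over verbatim, which is valid because $\pi_\infty(a^*|s)\in(0,1)$ for $s\in\calS_{>1}$ by \eqref{eq:policy-convergence-to-interior}; this is a consequence of Lemma~\ref{lem:summable-error-gives-policy-convergence}, not of $|\calA^*_s|>1$ alone.
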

Part~\ref{thm:case-4-policy-convergence-rate-optimal=1} of
Theorem~\ref{thm:case-4-policy-convergence-rate} follows from the uniqueness of
$\pi_\infty(\cdot|s)$ for $s\in\calS_{=1}$, together with the upper bound for $b^{\pi_k}_s$ established in the proof of
Theorem~\ref{thm:case-4-local-convergence} (see~\eqref{eq:Case-4-bound-for-b}). For
Part~\ref{thm:case-4-policy-convergence-rate-optimal>1}, since
$\pi_\infty(a^*|s) \in (0,1)$ by~\eqref{eq:policy-convergence-to-interior}, the proof follows the same argument as that of
Theorem~\ref{thm:case-2-policy-convergence-rate}. We therefore omit the details.

\begin{example}[Fermi-Dirac entropy]  \label{example:Fermi-Dirac}
    The mirror map is the Fermi-Dirac entropy $\psi(x) := x \ln x + (1-x) \ln (1-x)$ with $\dom \, \psi = [0,1]$. A direct calculation gives
    \begin{align*}
        \psi^\prime(x) = \ln \Big( \frac{x}{1-x} \Big), \quad \varphi(x) = \chi(x) = \ln \big( \frac{1-x}{x} \big), \quad \varphi^{-1}(y) = \chi^{-1}(y) = \frac{1}{e^y + 1}.
    \end{align*}
    Notice that for $y \geq 0$ there holds $\frac{e^{-y}}{2}\leq \frac{1}{e^y + 1} \leq e^{-y}$. Thus by Theorem~\ref{thm:case-4-local-convergence}, for all sufficiently large $k$,
    \begin{align*}
        \big\| V^* - V^{\pi_k} \big\|_\infty &\leq \frac{2|\calA|}{(1-\gamma)^2} \cdot \exp \Big( -k \cdot \frac{\eta\, (\Delta - 2\sigma)}{2} - \frac{C_1(\sigma)}{2} \Big) \lesssim \exp\Big( -k \cdot \frac{\eta\, (\Delta - 2\sigma)}{2} \Big), \\
        \big\| V^* - V^{\pi_k} \big\|_\infty &\geq \frac{\Delta}{2} \cdot  \exp\big( - k \cdot  \eta \, (\Delta + 2\sigma) - C_2(\sigma) \big) \gtrsim \exp(-k \cdot \eta \, (\Delta + 2\sigma)).
    \end{align*}
    Therefore, PMD equipped with the Fermi-Dirac entropy exhibits local linear value error convergence, similar to softmax NPG generated by the negative Shannon entropy. Its asymptotic policy convergence follows from Theorem~\ref{thm:case-4-policy-convergence}. Furthermore, Theorem~\ref{thm:case-4-policy-convergence-rate} yields a unified local policy convergence rate for all states (overall same rate for $s\in \calS_{=1}$ and $s\in \calS_{>1}$):
    \begin{align*}
        \forall\, s\in\calS: \quad \| \pi_k(\cdot|s)  - \pi_\infty(\cdot|s)\|_\infty \lesssim \exp(-k \eta \, (\Delta - 2\sigma) /2).
    \end{align*}
\end{example}

\subsection{Numerical experiments}\label{sec:numerics}
In this section, we present numerical experiments on two deterministic Grid World MDPs to illustrate the policy convergence of PMD under the four different cases. 

\paragraph{MDP settings.}
We consider  deterministic Grid World MDPs in our tests, each defined on a finite grid whose cells constitute the state space $\mathcal{S}$. At each non-terminal state, the agent chooses one of five actions, $\calA=\{\mathrm{up},\mathrm{down},\mathrm{left},\mathrm{right},\mathrm{stay}\}$. The action ``stay'' leaves the agent in its current cell. For each of the other four actions, if the corresponding adjacent cell lies within the grid, the agent moves to that cell; otherwise, it remains in its current cell. The agent receives zero reward on every transition that does not enter a goal state. The green cells represent the goal states, which are modeled as absorbing states. Upon entering a goal state, the agent receives a reward of $+1$. Thereafter, every action leaves the agent in the same goal state and yields zero reward. This absorbing-state formulation is equivalent to terminating the episode upon reaching a goal cell.

Two Grid World instances, illustrated in Figure~\ref{fig:Grids}, are designed to exhibit different optimal-action structures, allowing us in particular to demonstrate the MDP-dependent dichotomy arising in Case 3.  In the first instance, shown in Figure~\ref{fig:Grid-A}, each non-goal state in the first row or the first column has a unique optimal action, whereas each of the four non-goal states in the lower-right region has multiple optimal actions. In the second instance, shown in Figure~\ref{fig:Grid-B}, every  state admits multiple optimal actions, i.e., $\calS_{>1}=\calS$. For both MDPs, the discount factor is set to $\gamma=0.9$.

\begin{figure}[htbp]
    \centering
    \begin{subfigure}[b]{0.48\textwidth}
        \centering
        \begin{minipage}[c][4.2cm][c]{\textwidth}
            \centering
            \includegraphics[width=0.5\textwidth]{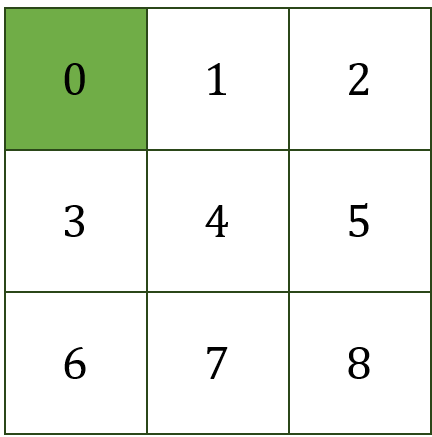}
        \end{minipage}
        \caption{Grid World A.}
        \label{fig:Grid-A}
    \end{subfigure}
    \hfill
    \begin{subfigure}[b]{0.48\textwidth}
        \centering
        \begin{minipage}[c][4.2cm][c]{\textwidth}
            \centering
            \includegraphics[width=0.7\textwidth]{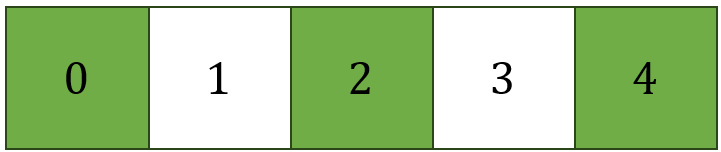}
        \end{minipage}
        \caption{Grid World B.}
        \label{fig:Grid-B}
    \end{subfigure}

    \caption{Two Grid World MDPs that used in our experiment setting. }
    \label{fig:Grids}
\end{figure}

\paragraph{Solving the PMD policy update.}
Except for special instances such as softmax NPG~\eqref{eq:softmax-NPG} and PQA~\eqref{eq:PQA}, the PMD policy update~\eqref{eq:PMD} generally does not admit a closed-form expression. We next describe a procedure for solving the maximization problem~\eqref{eq:PMD} under decomposable mirror map $h(p) = \sum_a \psi(p(a))$. For ease of notation, let $\theta_k(s,a) := \psi^\prime(\pi_k(a|s)) + \eta \, Q^{\pi_k}(s,a)$.  By Lemma~\ref{lem:KKT-cond}, the complete KKT conditions are given by
\begin{align*}\begin{cases}
    \psi^\prime(\pi_{k+1}(a|s)) = \theta_k(s,a) + \nu_s+\lambda_s(a),\\
    \sum_{a\in\mathcal{A}}\pi_{k+1}(a|s)=1,\\
    \lambda_s(a)\geq 0, \,\, \pi_{k+1}(a|s)\cdot\lambda_s(a)=0.
    \end{cases}
\end{align*}
Recall that $\psi'$ is strictly increasing on $\interior\, \dom \, \psi$.
\begin{itemize}
    \item For an action $a$ such that $\theta_k(s,a) + \nu_s>\psi'(0^+)$,  there must hold  $\lambda_s(a)=0$ and $\pi_{k+1}(a|s)>0$. Consequently, 
    \[
    \pi_{k+1}(a|s)=(\psi')^{-1}(\theta_k(s,a) + \nu_s).
    \]
    \item For action $a$ such that $\theta_k(s,a) + \nu_s<\psi'(0^+)$ (implying $\psi'(0)=\psi'(0^+)>-\infty$), there must hold $\lambda_s(a)>0$ and $\pi_{k+1}(a|s)=0$.
    \item For action $a$ such that $\theta_k(s,a) + \nu_s=\psi'(0^+)$ (implying $\psi'(0)=\psi'(0^+)>-\infty$), there must hold $\lambda_s(a)=0$ and $\pi_{k+1}(a|s)=0$.
\end{itemize}
To sum up, one has 
\[
\pi_{k+1}(a|s)=(\psi')^{-1}\left(\max\{\theta_k(s,a) + \nu_s,\; \psi'(0^+)\}\right).
\]
Plugging it back to $\sum_{a\in\mathcal{A}}\pi_{k+1}(a|s)=1$, we obtain that $\nu_s$ is a root of the following equation
\[
f(\nu)=\sum_{a\in\mathcal{A}}(\psi')^{-1}\left(\max\{\theta_k(s,a) + \nu, \;\psi'(0^+)\}\right)-1 = 0.
\]
We first show that the root of $f(\nu)$ exists. Notice that $f(\nu)$ is continuous and non-decreasing with respect to $\nu$. Now let $p(\nu,a) := (\psi')^{-1}\left(\max\{\theta_k(s,a) + \nu, \; \psi'(0^+)\}\right)$ and $a_k$ be the action that $\theta_k(s,a_k) = \max_a \theta_k(s,a)$. For a small enough $\epsilon > 0$, let $\nu_1 := \psi^\prime(\epsilon) - \max_{a} \theta_k(s,a)$. We have $p(\nu_1, a_k) = \epsilon$ and $p(\nu_1, a) \leq p(\nu_1, a_k)$ for all $a\in\calA$ by the monotonicity of $(\psi^\prime)^{-1}$, thus $f(\nu_1) \leq |\calA| \epsilon - 1 < 0$. On the other hand,
\begin{itemize}
    \item if $\psi^\prime(1^-) = +\infty$, then $p(\nu,a) \to 1$ as $\nu \to +\infty$, and thus $f(\nu) \to |\calA| - 1 > 0$ by the continuity of $(\psi^\prime)^{-1}$, i.e., there exists $\nu_2$ such that $f(\nu_2)>0$;
    \item if $\psi^\prime(1^-) < +\infty$, we have $1 \in \interior\, \dom \, \psi$ and thus there exists a small $\epsilon > 0$ such that $\psi^\prime(1+\epsilon) < +\infty$. Let $\nu_2 := \psi^\prime(1+\epsilon) - \max_a \theta_k(s,a)$. It is clear that $p(\nu_2, a_k) = 1+\epsilon \in\interior\, \dom \, \psi$, and thus $f(\nu_2) \geq p(\nu_2, a_k) - 1 \geq \epsilon > 0$.
\end{itemize}
Therefore, there always exist $\nu_1$ and $\nu_2$ such that $f(\nu_1) < 0$ and $f(\nu_2) > 0$. Together with the non-decreasing and continuous properties  of $f(\nu)$, we know that the root of $f(\nu)$ exists. It remains to show that the root is unique. Suppose that $\nu<\nu'$ are two roots.
Since every $p(\nu, a)$ is non-decreasing in $\nu$ and
\[
    \sum_a p(\nu, a)=\sum_a p(\nu', a)=1,
\]
we must have
\[
    p(\nu, a)=p(\nu', a),\qquad \forall a\in\mathcal A.
\]
However, at a root, at least one coordinate $a_0$ satisfies $p(\nu, a_0)>0$.
For this coordinate, the lower-bound truncation is inactive. Since
$(\psi')^{-1}$ is strictly increasing, one has
\[
    p(\nu', a_0)>p(\nu, a_0),
\]
yielding a contradiction. Therefore the root $\nu_s$ is unique. Consequently, $\nu_s$ can be computed by a standard bisection method on the interval $[\nu_1,\nu_2]$, after which the policy update is recovered coordinate-wise from the formula above. For all upcoming numerical experiments, the step size is fixed to $\eta = 0.1$ and the bisection accuracy is set to $|\nu - \nu_s| \leq 10^{-13}$.

\paragraph{Finite-time termination of Case~1.}
To verify the finite-time termination of Case~1, we test PMD with $\psi(x) = \frac{1}{2}x^2$ (corresponding to PQA), $\psi(x) = 2|x|^{3/2}$ (Tsallis entropy with $q = 3/2$), and $\psi(x) = \frac{1}{2} |x|^3$ (Tsallis with $q = 3$) on Grid World A and plot  $\| \pi_{k+1} - \pi_k \|_2$ against the number of iterations, see Figure~\ref{fig:Case-1}. It is obvious that $\| \pi_{k+1} - \pi_k \|_2$ converges to $0$ in finite iterations for all three PMD instances, verifying the finite-time termination results stated in Theorem~\ref{thm:case-1:finite-time}. It is worth noting that $\psi(x) = 2|x|^{3/2}$ is not $L$-coercive, and thus its finite-time termination is not covered by~\citet{Lin2022PMD-policy-convergence}. 

\begin{figure}[htbp]
    \centering
    \begin{subfigure}[b]{0.3\textwidth}
        \centering
        \begin{minipage}[c][4.2cm][c]{\textwidth}
            \centering
            \includegraphics[width=0.9\textwidth]{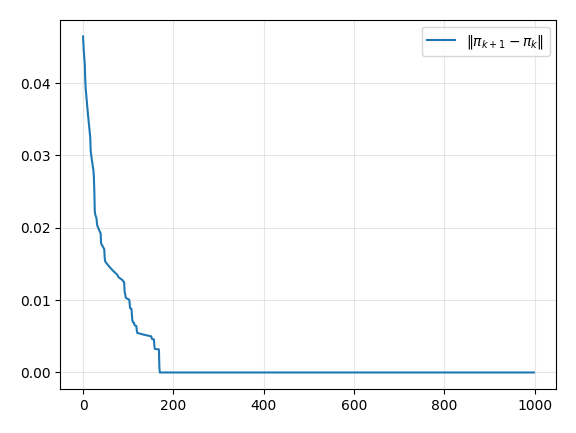}
        \end{minipage}
        \caption{$\psi(x) = \frac{1}{2} x^2$.}
        \label{fig:PQA}
    \end{subfigure}
    \hfill
    \begin{subfigure}[b]{0.3\textwidth}
        \centering
        \begin{minipage}[c][4.2cm][c]{\textwidth}
            \centering
            \includegraphics[width=0.9\textwidth]{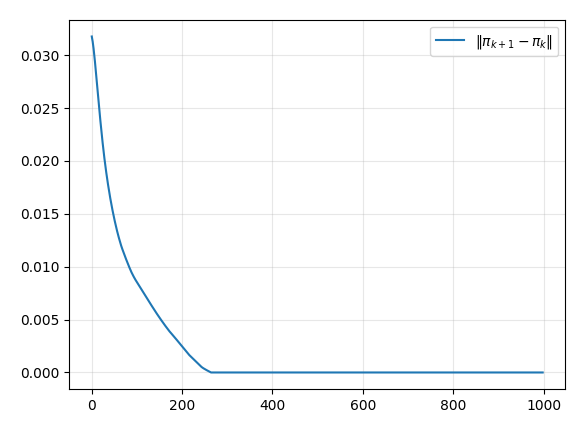}
        \end{minipage}
        \caption{$\psi(x) = 2|x|^{3/2}$.}
        \label{fig:Tsallis-q-1.5}
    \end{subfigure}
    \hfill
    \begin{subfigure}[b]{0.3\textwidth}
        \centering
        \begin{minipage}[c][4.2cm][c]{\textwidth}
            \centering
            \includegraphics[width=0.9\textwidth]{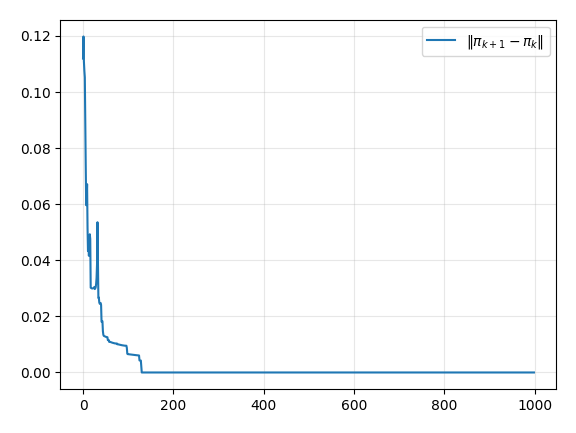}
        \end{minipage}
        \caption{$\psi(x) = \frac{1}{2}|x|^3$.}
        \label{fig:Tsallis-q-3}
    \end{subfigure}
    \caption{The plot of $\| \pi_{k+1} - \pi_k \|_2$ for Case~1, tested on Grid World A.}
    \label{fig:Case-1}
\end{figure}

\paragraph{Policy convergence of Case~2.}
To verify the policy convergence and policy convergence rates in Case~2, we consider PMD instantiated with $\psi(x)=x\ln x-x$ (corresponding to softmax NPG) and $\psi(x)=-2x^{1/2}$ (corresponding to Tsallis entropy with $q=1/2$), and test both methods on Grid World~A (Figure~\ref{fig:Grid-A}). We first run a sufficiently large number of iterations $T$, with $T=4000$ for softmax NPG and $T=10000$ for the Tsallis entropy with $q=1/2$, and use $\pi_{\mathrm{last}}:=\pi_T$ as an approximation to the limiting policy $\pi_\infty$. For the two representative states $s_1=3\in\calS_{=1}$ and $s_2=7\in\calS_{>1}$, we plot the curves of
\[
\log\bigl\|\pi_k(\cdot|s)-\pi_{\mathrm{last}}(\cdot|s)\bigr\|_2
\]
for $k=0,\ldots,\lfloor T/4\rfloor$, as shown in Figure~\ref{fig:Case-2}. The plots show that softmax NPG exhibits linear convergence (Figure~\ref{fig:softmax-NPG}), whereas PMD with the Tsallis entropy for $q=1/2$ exhibits sublinear convergence (Figure~\ref{fig:Tsallis-q-0.5}). Moreover, for the Tsallis entropy, the policy convergence rates differ between states in $\calS_{=1}$ and those in $\calS_{>1}$, in accordance with the state-dependent rate exponents predicted by our theory. These observations support the policy convergence rate results established in Theorem~\ref{thm:case-2-policy-convergence-rate} and Examples~\ref{example:Shannon} and~\ref{example:Tsallis-q<1}.
\begin{figure}[htbp]
    \centering
    \begin{subfigure}[b]{0.45\textwidth}
        \centering
        \begin{minipage}[c][4.2cm][c]{\textwidth}
            \centering
            \includegraphics[width=0.9\textwidth]{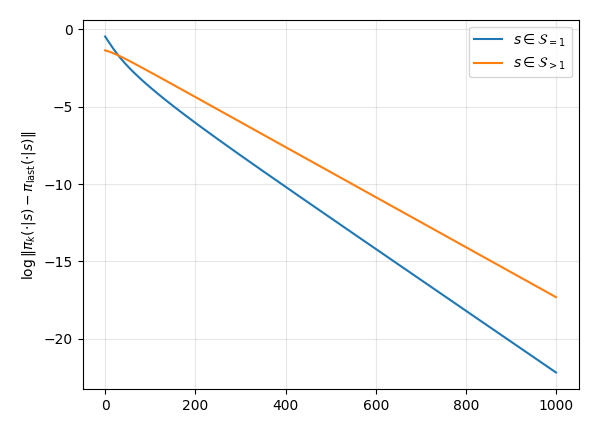}
        \end{minipage}
        \caption{$\psi(x) = x \ln x - x$.}
        \label{fig:softmax-NPG}
    \end{subfigure}
    \hfill
    \begin{subfigure}[b]{0.45\textwidth}
        \centering
        \begin{minipage}[c][4.2cm][c]{\textwidth}
            \centering
            \includegraphics[width=0.9\textwidth]{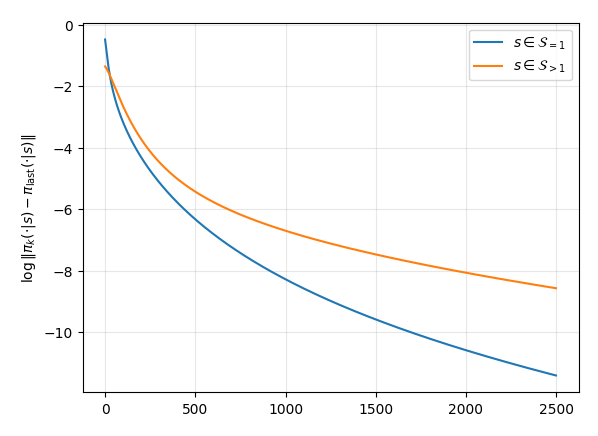}
        \end{minipage}
        \caption{$\psi(x) = -2 x^{1/2}$.}
        \label{fig:Tsallis-q-0.5}
    \end{subfigure}
    \caption{The curves of $\log \| \pi_{k}(\cdot|s) - \pi_{\mathrm{last}}(\cdot|s) \|_2$ for Case~2, tested on Grid World A (Figure~\ref{fig:Grid-A}). The blue curve shows the result on state $s_1 = 3 \in \calS_{=1}$ and the orange one corresponds to state $s_2 = 7 \in \calS_{>1}$.}
    \label{fig:Case-2}
\end{figure}

\paragraph{Policy convergence dichotomy of Case~3.} Since the policy convergence behavior in Case~3 differs depending on whether $\calS_{>1}=\calS$ or $\calS_{>1}\neq\calS$, we test PMD with the Hellinger mapping $\psi(x)=-\sqrt{1-x^2}$ on both Grid Worlds. For Grid World~A, we follow the same experimental setup as in Case~2 and plot
$\log\bigl\|\pi_k(\cdot|s)-\pi_{\mathrm{last}}(\cdot|s)\bigr\|_2$
in Figure~\ref{fig:Hellinger-A}. For Grid World~B, we instead plot $\|\pi_{k+1}-\pi_k\|_2$, as shown in Figure~\ref{fig:Hellinger-B}. The results show that PMD with the Hellinger mapping on Grid World~A, where $\calS_{=1}\neq\emptyset$, exhibits behavior similar to that of the Tsallis entropy with $q=0.5$ (Figure~\ref{fig:Tsallis-q-0.5}), supporting the sublinear local policy convergence rate in Example~\ref{example:Hellinger}. On Grid World~B, where $\calS_{>1}=\calS$, the results demonstrate finite-time termination, similar to Case~1, and confirm Theorem~\ref{thm:case-3-finite-time}.

\begin{figure}[htbp]
    \centering
    \begin{subfigure}[b]{0.45\textwidth}
        \centering
        \begin{minipage}[c][4.2cm][c]{\textwidth}
            \centering
            \includegraphics[width=0.9\textwidth]{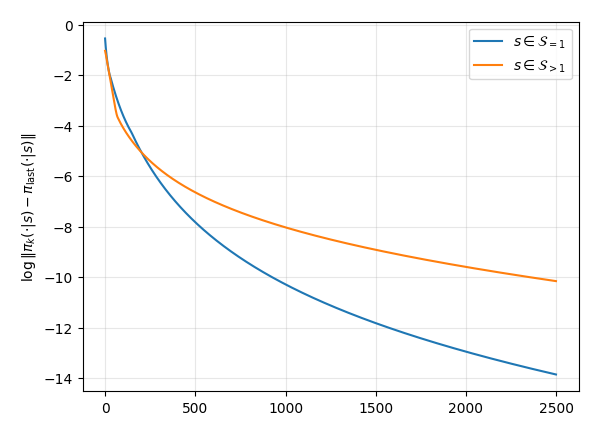}
        \end{minipage}
        \caption{$\psi(x) = -\sqrt{1-x^2}$, Grid World A.}
        \label{fig:Hellinger-A}
    \end{subfigure}
    \hfill
    \begin{subfigure}[b]{0.45\textwidth}
        \centering
        \begin{minipage}[c][4.2cm][c]{\textwidth}
            \centering
            \includegraphics[width=0.9\textwidth]{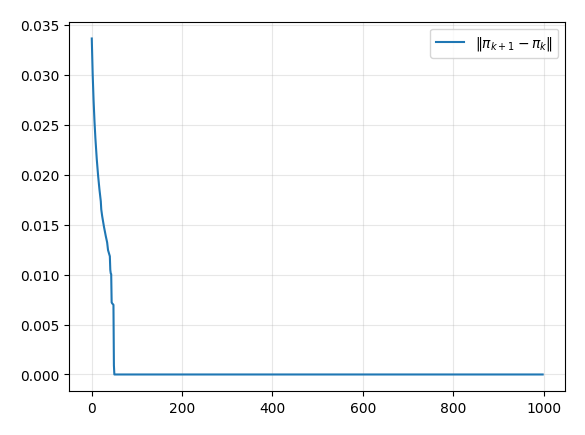}
        \end{minipage}
        \caption{$\psi(x) = -\sqrt{1-x^2}$, Grid World B.}
        \label{fig:Hellinger-B}
    \end{subfigure}
    \caption{Policy convergence results for PMD with Hellinger mapping on Grid Worlds A (left) and B (right).}
    \label{fig:Case-3}
\end{figure}

\paragraph{Policy convergence of Case~4.} We test PMD with Fermi-Dirac entropy $\psi(x) = x\ln x + (1-x) \ln (1-x)$ on Grid World A under the same settings as Case~2. The results are presented in Figure~\ref{fig:Case-4}, which clearly shows the local linear convergence for PMD with the Fermi-Dirac entropy.

\begin{figure}[htbp]
    \centering
    \includegraphics[width=0.45\textwidth]{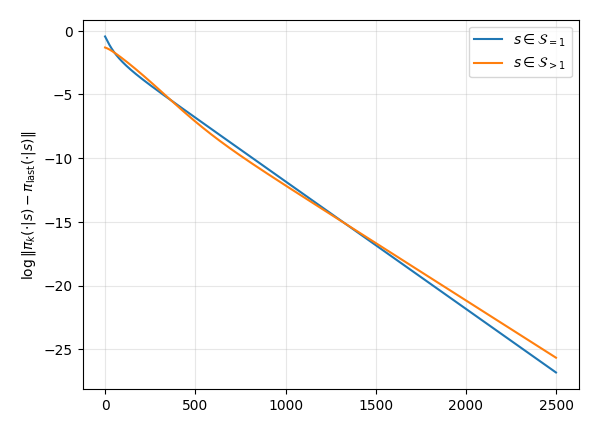}
    \caption{Policy convergence result for PMD with Fermi-Dirac entropy on Grid World A.}
    \label{fig:Case-4}
\end{figure}

\section{Proofs of main theorems}
\label{sec:proofs}

\subsection{Proof of Theorem~\ref{thm:case-1:finite-time}}
\label{sec:pf:thm:case-1:finite-time}
It suffices to show that $\pi_t = \pi_T$ for all $t \geq T$, where $T = T(\Delta/4) + \big\lceil 4(\psi^\prime(1^-) - \psi^\prime(0^+)) / (\eta\Delta) \big\rceil + 1$. We first prove that $b^{\pi_T}_s = 0$ for all $s\in\tilde \calS$ by contradiction. Suppose that there exists a state $s \in \tilde\calS$ such that $b^{\pi_T}_s > 0$. Then by Lemma~\ref{lem:gap-metric-increase}~\ref{lem:termination-condition} there holds $b^{\pi_k}_s > 0$ for all $T(\Delta/4) \leq k \leq T$, and thus one can apply Lemma~\ref{lem:gap-metric-increase}~\ref{lem:gap-metric-increase-lower} to obtain that
\begin{align*}
    G_T(s) &\geq G_{T(\Delta/4)}(s) + \frac{\eta\,\Delta}{2 } \cdot (T - T(\Delta/4)) \\
    &= G_{T(\Delta/4)}(s) + \frac{\eta\,\Delta}{2} \cdot \ls( \ls\lceil \frac{4(\psi^\prime(1^-) - \psi^\prime(0^+))}{\eta\Delta} \rs\rceil + 1 \rs) \\
    &\geq G_{T(\Delta/4)}(s) + 2 \big( \psi^\prime(1^-) - \psi^\prime(0^+) \big ) + \eta\,\Delta / 2.
\end{align*}
On the other hand, since $\pi_k(a|s)\in\interior \, \dom \, \psi$ and $\psi'$ is strictly increasing on $\interior \, \dom \, \psi$, there holds 
\[
\psi'(0^+)\leq \psi'(\pi_k(a|s))\leq \psi'(1^-).
\]
It follows that  $|G_k(s)| \leq \psi^\prime(1^-) - \psi^\prime(0^+)$ for all $k$ and $s$. Consequently,
\begin{align*}
\psi^\prime(1^-) - \psi^\prime(0^+) \geq - \big( \psi^\prime(1^-) - \psi^\prime(0^+) \big) + 2 \big( \psi^\prime(1^-) - \psi^\prime(0^+) \big) + \eta \, \Delta / 2,
\end{align*}
yielding a contradiction. Therefore, we have $b^{\pi_T}_s = 0$ for all $s \in \calS$ and thus $\pi_T \in \Pi^*$. Applying Lemma~\ref{lem:gap-metric-increase}~\ref{lem:termination-condition}, we obtain that
\begin{align*}
    \forall\, t \geq T: \quad \pi_t = \pi_T \in \Pi^*.
\end{align*}
To obtain the upper bound for $T$, leveraging Lemma~\ref{lem:Xiao-sublinear} one obtains that
\begin{align*}
    T(\Delta / 4) \leq \ls\lceil \frac{4}{\Delta} \Bigg( \frac{\| D^{\pi^*}_{\pi_0} \|_\infty}{\eta (1-\gamma)} + \frac{1}{(1-\gamma)^2} \Bigg)  \rs\rceil - 1.
\end{align*}
Notice that 
\begin{align*}
    \forall\, s\in\calS: \quad D_h(\pi^*(\cdot|s) \, \| \, \pi_0(\cdot|s)) &= h(\pi^*(\cdot|s)) - h(\pi_0(\cdot|s)) - \inner{\nabla h(\pi_0(\cdot|s))}{\pi^*(\cdot|s)-\pi_0(\cdot|s)} \\
    &\leq |h(\pi^*(\cdot|s))| + |h(\pi_0(\cdot|s))| + \| \nabla h(\pi_0(\cdot|s)) \|_\infty \cdot \| \pi^*(\cdot|s)-\pi_0(\cdot|s) \|_1 \\
    &\leq 2 \sup_{p\in\Delta(\calA)} |h(p)| + 2 \sup_{q\in\Delta(\calA)} \| \nabla h(q) \|_\infty \\
    &\leq 2\sup_{p\in\Delta(\calA)} \bigg|\sum_{a\in\calA} \psi(p(a)) \bigg| + 2\sup_{x\in[0,1]} |\psi^\prime(x)| := C_\psi,
\end{align*}
where the last line follows from $h(p) = \sum_a \psi(p(a))$. In Case~1, $\psi^\prime$ is well defined over $[0,1]$ and thus $C_\psi < \infty$. We obtain that
\begin{align*}
    T = T(\Delta/4) + \left\lceil \frac{4(\psi^\prime(1^-) - \psi^\prime(0^+))}{\eta\Delta} \right\rceil + 1 \leq \ls\lceil\frac{4}{\Delta} \bigg( \frac{C_\psi}{\eta(1-\gamma)} + \frac{1}{(1-\gamma)^2} \bigg)\rs\rceil + \left\lceil \frac{4(\psi^\prime(1^-) - \psi^\prime(0^+))}{\eta\Delta} \right\rceil.
\end{align*}

\subsection{Proof of Theorem~\ref{thm:case-2-local-convergence}}
\label{sec:pf:thm:case-2-local-convergence}
\textbf{Upper bound:} As $\psi^\prime(0^+) = -\infty$, by Assumption~\ref{ass:psi-Legendre} there holds $0 \notin \interior \, \dom \, \psi$ and thus $\pi_k(a|s) > 0$ for all $k, s, a$ (equation~\eqref{eq:KKT-cond-0-not-diff}). Therefore $b^{\pi_k}_s > 0$ for all $k, \, s\in\tilde{\mathcal{S}}$ and one can apply Lemma~\ref{lem:gap-metric-increase}~\ref{lem:gap-metric-increase-lower} to obtain that
\begin{align*}
    \forall\, k \geq T(\sigma), \; \forall\, s\in\tilde\calS: \quad G_k(s) \geq G_{T(\sigma)}(s) + \eta \,(\Delta-2\sigma) \cdot (k - T(\sigma)),
\end{align*}
which implies that
\begin{align*}
    \forall\, k \geq T(\sigma), \; \forall s\in\tilde\calS: \quad \max_{a^\prime \not\in \calA^*_s} \, \psi^\prime(\pi_k(a^\prime | s)) \leq {\min_{a^*\in\calA^*_s} \, \psi^\prime(\pi_k(a^*|s))} - G_{T(\sigma)}(s) - \eta \, (\Delta - 2\sigma) \cdot (k - T(\sigma)).
\end{align*}
By the strict increasing monotonicity of $\psi^\prime$ and $\pi_k(a|s) \in (0,1]$, there holds $\min_{a^*\in\calA^*_s} \, \psi^\prime(\pi_k(a^*|s)) \leq \psi^\prime(1^-) < +\infty$ and thus
\begin{align*}
    \forall\, k \geq T(\sigma), \; \forall\, s\in\tilde\calS , \; \forall\, a^\prime \not \in \calA^*_s: \quad \psi^\prime(\pi_k(a^\prime | s )) &\leq \underbrace{\psi^\prime(1^-) - G_{T(\sigma)}(s) + \eta \, (\Delta - 2\sigma) \cdot T(\sigma)}_{:= C_s(\sigma)} - k\cdot \eta \, (\Delta - 2\sigma) \\
    &= C_s(\sigma) - k\cdot \eta \, (\Delta - 2\sigma),
\end{align*}
where $|C_s(\sigma)| < \infty$ as $|G_{T(\sigma)}(s)| < \infty$. Noting that $\varphi := - \psi^\prime$, there holds
\begin{align*}
    \forall\, k \geq T(\sigma), \; \forall \, s\in\tilde\calS, \; \forall\, a^\prime \not \in \calA^*_s: \quad \varphi(\pi_k(a^\prime|s)) \geq -C_s(\sigma) + k \cdot \eta \, (\Delta - 2\sigma).
\end{align*}
As $\varphi$ is strictly decreasing over $\interior \, \dom \, \psi$, it has an inverse function, denoted $\varphi^{-1}: \; [-\psi^\prime(1^-), \; +\infty) \to (0,1]$ which is also strictly decreasing. Let $T_1(\sigma) \geq T(\sigma)$ be some large enough time index such that $-C_s(\sigma) + k \cdot \eta \, (\Delta - 2\sigma) \geq - \psi^\prime(1^-)$ for all $k \geq T_1(\sigma)$ and $s\in\tilde\calS$. Then applying $\varphi^{-1}$ to both side gives that
\begin{align*}
    \forall\, k \geq T_1(\sigma), \; \forall\, s\in\tilde\calS,\; \forall\, a^\prime \not \in \calA^*_s: \quad \pi_k(a^\prime | s ) &\leq \varphi^{-1} \big( k \cdot \eta \, (\Delta - 2\sigma) - C_s(\sigma) \big) \\
    &\leq \varphi^{-1} \big( k \cdot \eta \, (\Delta - 2\sigma) - C_1(\sigma) \big),
\end{align*}
where $C_1(\sigma) := \max_{s\in\tilde\calS} C_s(\sigma)$. As $b^{\pi_k}_{s^\prime} = 0$ for $s^\prime \not\in\tilde\calS$, it finally yields that
\begin{align*}
    \forall\, k \geq T_1(\sigma), \; \forall \, s\in\calS: \quad b^{\pi_k}_s = \sum_{a^\prime \not \in \calA^*_s} \pi_k(a^\prime | s) \leq |\calA| \cdot \varphi^{-1} \big( k \cdot \eta \, (\Delta - 2\sigma) - C_1(\sigma) \big).  \numberthis \label{eq:case-2-bk-local-bound}
\end{align*}
Leveraging Lemma~\ref{lem:sub-optimal-probabilities} gives the desired result.

\vspace{1em}

\noindent \textbf{Lower bound:} Pick $s_0 \in \tilde\calS$ and $a^\prime_0 \not\in\calA^*_{s_0}$ such that
\begin{align*}
    V^*(s_0) - Q^{*}(s_0, a_0^\prime) = \Delta.
\end{align*}
In Case~2 we have $0 \not \in \interior \, \dom \, \psi$ and thus $\pi_k(a|s_0) > 0$ for all $k$, $a$ (equation~\eqref{eq:KKT-cond-0-not-diff}). It implies that
\begin{align*}
    \forall\, k: \quad b^{\pi_{k}}_{s_0} < 1.
\end{align*}
Applying Lemma~\ref{lem:gap-metric-increase}~\ref{lem:gap-metric-increase-upper} with $a^\prime = a^\prime_0$ and $s = s_0$ one obtains that
\begin{align*}
    \forall\, k \geq T(\sigma): \quad G_{k+1}(s_0, a^\prime_0) \leq G_k(s_0, a^\prime_0) + \eta \, (\Delta + 2\sigma).
\end{align*}
Iterating from $T(\sigma)$ to $k-1$ gives that
\begin{align*}
    \max_{a^* \in \calA^*_{s_0}} \, \psi^\prime (\pi_{k}(a^*|s_0)) - \psi^\prime(\pi_k(a^\prime_0 | s_0)) \leq \max_{a^* \in \calA^*_{s_0}} \, \psi^\prime(\pi_{T(\sigma)}(a^*|s_0) ) - \psi^\prime(\pi_{T(\sigma)}(a^\prime_0 | s_0)) + \eta \, (\Delta + 2\sigma)\cdot (k - T(\sigma)).
\end{align*}
By Lemma~\ref{lem:sub-optimal-probabilities}, 
\begin{align*}
    \forall\, k \geq T(\sigma): \;\; \| V^* - V^{\pi_k} \|_\infty \leq \sigma \;\; \Longrightarrow \;\; b^{\pi_k}_s \leq \frac{\sigma}{\Delta},\; \forall\, s\in\calS.
\end{align*}
Thus by the strict monotonicity of $\psi^\prime$ we have
\begin{align*}
    \forall \, k \geq T(\sigma): \quad \max_{a^* \in \calA^*_{s_0}} \, \psi^\prime(\pi_k(a^*|s_0)) &= \psi^\prime \bigg( \max_{a^* \in \calA^*_{s_0}} \, \pi_k(a^*|s_0) \bigg) \\
    &\geq \psi^\prime \bigg( \frac{1}{|\calA_{s_0}^*|} \sum_{a^*\in\calA^*_{s_0}} \pi_k(a^*|s_0) \bigg) \\
    &\geq \psi^\prime \bigg(\frac{1}{|\calA|} (1-b^{\pi_k}_{s_0}) \bigg) \\
    &\geq \psi^\prime \bigg( \frac{1}{|\calA|} \Big( 1 - \frac{\sigma}{\Delta} \Big) \bigg).
\end{align*}
Substituting this bound into the preceding inequality yields
\begin{align*}
    \psi^\prime(\pi_k(a^\prime_0|s_0)) &\geq \underbrace{\psi^\prime \bigg( \frac{1}{|\calA|} \Big( 1- \frac{\sigma}{\Delta} \Big) \bigg) - \max_{a^* \in \calA^*_{s_0}} \, \psi^\prime(\pi_{T(\sigma)}(a^*|s_0) ) + \psi^\prime(\pi_{T(\sigma)}(a^\prime_0 | s_0)) + \eta \, (\Delta + 2\sigma) T(\sigma)}_{:=C_2(\sigma)} \\
    &\;\;\;\; - \eta \, (\Delta + 2\sigma) \cdot k \\[.3em]
    &= C_2(\sigma) - \eta \,(\Delta + 2\sigma) \cdot k.
\end{align*}
Let $ T_2(\sigma) \geq T(\sigma)$ be a large enough time index such that $-C_2(\sigma) + \eta \, (\Delta + 2\sigma) \cdot k \geq -\psi^\prime(1^-)$ for all $k \geq  T_2(\sigma)$. Then using $\varphi := -\psi^\prime$ one has
\begin{align*}
    \forall \, k \geq T_2(\sigma): \quad &-\psi^\prime (\pi_k(a^\prime_0 | s_0)) \leq \eta \, (\Delta + 2\sigma) \cdot k - C_2(\sigma) \\[.3em]
    \Longrightarrow \quad & \pi_k(a^\prime_0 | s_0) \geq \varphi^{-1} \big( \eta \, (\Delta + 2\sigma) \cdot k - C_2(\sigma) \big).
\end{align*}
By Lemma~\ref{lem:sub-optimal-probabilities} one obtains the desired result,
\begin{align*}
    \forall\, k \geq T_2(\sigma): \quad \big\|V^* - V^{\pi_k}\big\|_\infty \geq  V^*(s_0) - V^{\pi_k}(s_0) \geq \Delta \cdot b^{\pi_k}_{s_0} \geq \Delta \cdot \varphi^{-1} \big( \eta \, (\Delta + 2\sigma) \cdot k - C_2(\sigma) \big).
\end{align*}

\subsection{Proof of Theorem~\ref{thm:case-2-policy-convergence}}
\label{sec:pf:thm:case-2-policy-convergence}
Set $\sigma = \Delta /4$. Recall the upper bound in Theorem~\ref{thm:case-2-local-convergence},
\begin{align*}
    \forall\, k \geq T_1(\Delta/4): \quad \big\| V^* - V^{\pi_k} \big\|_\infty \leq \frac{|\calA|}{(1-\gamma)^2} \cdot \varphi^{-1} \big( k \cdot {\eta\,\Delta}/{2} - C_1(\Delta/4) \big),
\end{align*}
where $\varphi(\cdot) = -\psi^\prime(\cdot)$. Recall that $\psi'$ is continuous and strictly increasing on $\interior \, \dom\,\psi$. Thus $\varphi$ is strictly decreasing and continuous over $(0,1)$. As $\psi^\prime(0^+) = -\infty$, we have $\varphi(0^+) = +\infty$ and thus there exists $\epsilon > 0$ such that $\varphi(x) \geq 0$ over $(0,\epsilon]$. Furthermore, since $\psi$ is a convex and closed function, there holds $\psi(x)\rightarrow\psi(0)$ when $x\downarrow 0$. Consequently,
\begin{align*}
    \int_0^\epsilon \varphi(x) dx = -\int_0^\epsilon \psi^\prime(x) dx = \psi(0) - \psi(\epsilon) < \infty
\end{align*}
as $[0,1] \subseteq \dom\, \psi$. Setting $T \geq T_1(\Delta/4)$ such that $k \cdot \eta \, \Delta /2 - C_1(\Delta/4) \geq \varphi(\epsilon)$ for all $k \geq T$, applying Lemma~\ref{lem:integral-discrimination} with $f = \varphi$ implies that
\begin{align}  \label{eq:case-2-summability}
    \sum_{k \geq T} \big\| V^* - V^{\pi_k} \big\|_\infty \leq \frac{|\calA|}{(1-\gamma)^2} \sum_{k \geq T} \varphi^{-1} \big( k \cdot \eta \, \Delta / 2 - C_1(\Delta/4) \big) < \infty.
\end{align}
The proof is completed by invoking Lemma~\ref{lem:summable-error-gives-policy-convergence}.

\subsection{Proof of Theorem~\ref{thm:case-2-policy-convergence-rate}}
\label{sec:pf:thm:case-2-policy-convergence-rate}
For part~\ref{thm:case-2-policy-convergence-rate-optimal=1}, since the limiting optimal policy is unique for $s\in \calS_{=1}$, one has $\| \pi_k(\cdot|s) - \pi_\infty(\cdot|s) \|_\infty \leq b^{\pi_k}_s$ and invoking~\eqref{eq:case-2-bk-local-bound} gives the desired result. In the following we prove Theorem~\ref{thm:case-2-policy-convergence-rate}~\ref{thm:case-2-policy-convergence-rate-optimal>1}. Since $0 \not\in \interior\, \dom \, \psi$, by~\eqref{eq:policy-convergence-to-interior} one has $\pi_\infty(a^*|s) > 0$ for all $a^* \in \calA^*_s$. Therefore $\pi_\infty(a^*|s) \in (0,1)$ as there are multiple optimal actions. Now define
\begin{align*}
    \varrho_s := \frac{1}{2}\cdot \min \; \Big \{ \min_{a^*\in\calA^*} \, \pi_\infty(a^*|s), \;\; 1 - \max_{a^*\in\calA^*_s} \, \pi_\infty(a^*|s) \Big\}.
\end{align*}
Then the following region
\begin{align*}
    \mathcal{I}_s := \ls[ \min_{a^*\in\mathcal{A}_s^*}\pi_\infty(a^*|s) - \varrho_s, \; \max_{a^*\in\mathcal{A}_s^*}\pi_\infty(a^*|s) + \varrho_s  \rs] \subsetneqq \interior \, \dom \, \psi.
\end{align*}
Define
\begin{align*}
    U_s := \sup_{x\in \mathcal{I}_s} \; \psi^{\prime\prime}(x), \quad L_s := \inf_{x\in\mathcal{I}_s} \; \psi^{\prime\prime}(x).
\end{align*}
As $\psi^{\prime\prime}$ is continuous and $0 < \psi^{\prime\prime}(x) < +\infty$ for any $x\in\mathcal{I}_s$, one has $U_s$, $L_s \in (0,+\infty)$. Thus for any $x$ , $y \in \mathcal{I}_s$ there holds
\begin{align}  \label{eq:case-2-bilevel-Lipchitz}
    L_s \cdot |x - y| \leq \big|\psi^\prime(x) - \psi^\prime(y) \big| \leq U_s \cdot |x-y|.
\end{align}
Define 
\begin{align*}\psi^{\prime}(\mathcal{I}_s) := \{ \psi^\prime(x): \; x \in \mathcal{I}_s \} = \ls[ \psi^\prime \ls( \min_{a^*\in\mathcal{A}_s^*}\pi_\infty(a^*|s) - \varrho_s \rs) , \;\; \psi^\prime \ls( \max_{a^*\in\mathcal{A}_s^*}\pi_\infty(a^*|s) + \varrho_s\rs)\rs].\end{align*} 
As $\mathcal{I}_s \subsetneqq \interior \, \dom \, \psi$, $(\psi^\prime)^{-1}$ is well defined on $\psi^\prime(\mathcal{I}_s)$. Then for any $u$, $v \in \psi^\prime(\mathcal{I}_s)$ equation~\eqref{eq:case-2-bilevel-Lipchitz} gives that
\begin{align}  \label{eq:case-2-bilevel-Lipchitz-2}
    \frac{1}{U_s} |u - v| \leq \big| (\psi^\prime)^{-1}(u) - (\psi^\prime)^{-1}(v) \big| \leq \frac{1}{L_s} |u - v|.
\end{align}
Now, notice that $0 \not \in \interior \, \dom \, \psi$. By Lemma~\ref{lem:KKT-cond} we know that $\pi_k(a^*|s) > 0$ and for all $a^*\in\calA^*_s$ and $k \geq 0$. Then Lemma~\ref{lem:KKT-cond} says,
\begin{align*}
    \forall\, k \geq 0, \; \forall\, a^* \in \calA^*_s: \quad \psi^\prime(\pi_{k+1}(a^*|s)) - \psi^\prime(\pi_k(a^*|s)) = \eta \, Q^{\pi_k}(s,a^*) + \nu_s.
\end{align*}
Now pick an arbitrary optimal action $a^*\in\calA^*_s$. For any $\tilde a^* \in\calA^*_s$ and $\tilde a^* \neq a^*$, the identity above implies that
\begin{align*}
    \forall\, k \geq 0, \; &\forall\, \tilde a^* \in \calA^*_s: \quad \\
    &\psi^\prime(\pi_{k+1}(a^*|s)) - \psi^\prime(\pi_{k+1}(\tilde a^*|s)) = \psi^\prime(\pi_k(a^*|s)) - \psi^\prime(\pi_k(\tilde a^*|s)) + \eta \, \big[ Q^{\pi_k}(s,a^*) - Q^{\pi_k}(s, \tilde a^*) \big].
\end{align*}
Summing the preceding identity from $j=k$ to $t-1$ yields
\begin{align*}
    \forall\, t >  k \geq 0, \; &\forall\,  \tilde a^* \in \calA^*_s: \quad \\
    &\psi^\prime(\pi_{t}(a^*|s)) - \psi^\prime(\pi_{t}(\tilde a^*|s)) = \psi^\prime(\pi_k(a^*|s)) - \psi^\prime(\pi_k(\tilde a^*|s)) + \eta \sum_{j=k}^{t-1} \big[ Q^{\pi_j}(s,a^*) - Q^{\pi_j}(s, \tilde a^*) \big]. \numberthis \label{eq:policy-rate-case-2-telescoping}
\end{align*}
Note that $\psi^\prime$ is continuous over $\interior\, \dom\, \psi$. As $\pi_t(a^*|s) \in \interior\, \dom \, \psi$ and $\pi_t(a^*|s) \to \pi_\infty(a^*|s) \in \interior \, \dom\, \psi$, the continuity of $\psi^\prime$ gives that $\lim_{t\to\infty} \psi^\prime(\pi_t(a^*|s)) = \psi^\prime(\pi_\infty(a^*|s))$ for all $a^* \in \calA^*_s$. Thus the limit of the left-hand side is $\psi^\prime(\pi_\infty(a^*|s)) - \psi^\prime(\pi_\infty(\tilde a^*|s))$. For the limit of the right-hand side, notice that
\begin{align*}
    \forall\,\tilde a^* \in \calA^*_s: \quad &\phantom{=\,\,\,}\sum_{j=k}^{\infty} \big| Q^{\pi_j}(s,a^*) - Q^{\pi_j}(s, \tilde a^*) \big| \\
    &\leq \sum_{j=k}^\infty \big| Q^*(s,a^*) - Q^{\pi_j}(s,a^*) \big| + \big| Q^*(s, \tilde a^*) - Q^{\pi_j}(s, \tilde a^*) \big| + \big| Q^*(s, a^*) - Q^*(s, \tilde a^*) \big| \\
    &\leq 2\cdot \sum_{j=k}^\infty \big\| Q^* - Q^{\pi_j} \big\|_\infty \leq 2\cdot \sum_{j=k}^\infty \big\| V^* - V^{\pi_j} \big\|_\infty < \infty.  \numberthis \label{eq:policy-rate-case-2-bound-for-e}
\end{align*}
where the last line follows from~\eqref{eq:case-2-summability}. Thus the limit of the right-hand side also exists. Now we take the limit $t \to \infty$ in~\eqref{eq:policy-rate-case-2-telescoping} to obtain
\begin{align*}
    \forall\, k \geq 0, \; &\forall \, \tilde a^* \in \calA^*_s: \\
    &\psi^\prime(\pi_k(\tilde a^*|s)) = \psi^\prime(\pi_k(a^*|s))  + \psi^\prime(\pi_\infty(\tilde a^*|s)) - \psi^\prime(\pi_\infty(a^*|s)) + \eta \sum_{j=k}^\infty \big[ Q^{\pi_j}(s,a^*) - Q^{\pi_j}(s, \tilde a^*) \big].
\end{align*}
Notice that $\pi_k(\tilde a^*|s) \in \interior\, \dom \, \psi$. By $\pi_k(\tilde a^*|s) = (\psi^\prime)^{-1} [\psi^\prime (\pi_k(\tilde a^*|s)) ]$,
\begin{align*}
    \forall\, &k \geq 0, \; \forall \, \tilde a^* \in \calA^*_s: \\
    &\pi_k(\tilde a^* | s) = (\psi^\prime)^{-1} \bigg[ \psi^\prime(\pi_\infty(\tilde a^*|s)) + \underbrace{\psi^\prime(\pi_k(a^*|s)) - \psi^\prime(\pi_\infty(a^*|s))}_{:= c_k(s)} + \eta \underbrace{\sum_{j=k}^\infty \big[ Q^{\pi_j}(s,a^*) - Q^{\pi_j}(s, \tilde a^*) \big]}_{:= e_k(s, \tilde a^*)} \bigg].
\end{align*}
Leveraging the fact $\sum_{a^*\in\calA^*_s} \pi_\infty(a^*|s) - \pi_k(a^*|s) = b^{\pi_k}_s$,
\begin{align*}
    b^{\pi_k}_s &= \pi_\infty(a^*|s) - \pi_k(a^*|s) \\
    &\;\;\;\;+ \sum_{\tilde a^* \in \calA^*_s, \, \tilde a^* \neq a^*} (\psi^\prime)^{-1} \big[ \psi^\prime (\pi_\infty(\tilde a^*|s)) \big] - (\psi^\prime)^{-1} \Big[ \psi^\prime(\pi_\infty(a^*|s)) + c_k(s) + \eta \, e_k(s, \tilde a^*) \Big] \\
    &= (\psi^\prime)^{-1} [\psi^\prime (\pi_\infty(a^*|s))] - (\psi^\prime)^{-1} \big[ \psi^\prime(\pi_\infty(a^*|s)) + c_k(s) \big] \\
    &\;\;\;\; +  \sum_{\tilde a^* \in \calA^*_s, \, \tilde a^* \neq a^*} (\psi^\prime)^{-1} \big[ \psi^\prime (\pi_\infty(\tilde a^*|s)) \big] - (\psi^\prime)^{-1} \Big[ \psi^\prime(\pi_\infty(a^*|s)) + c_k(s) + \eta \, e_k(s, \tilde a^*) \Big] \\
    &= \underbrace{\sum_{\hat a^* \in \calA^*_s} (\psi^\prime)^{-1} [\psi^\prime (\pi_\infty(\hat a^*|s))] - (\psi^\prime)^{-1} \big[ \psi^\prime(\pi_\infty(\hat a^*|s)) + c_k(s) \big]}_{:= (\Rmnum{1})} \\
    &\;\;\;\; + \underbrace{\sum_{\tilde a^* \in \calA^*_s, \, \tilde a^* \neq a^*} (\psi^\prime)^{-1} \big[ \psi^\prime (\pi_\infty(\tilde a^*|s)) + c_k(s) \big] - (\psi^\prime)^{-1} \Big[ \psi^\prime(\pi_\infty(\tilde a^*|s)) + c_k(s) + \eta \, e_k(s, \tilde a^*) \Big]}_{:= (\Rmnum{2})}.
\end{align*}
Thus we obtain that
\begin{align*}
    |(\Rmnum{1})| \leq b^{\pi_k}_s + |(\Rmnum{2})|.
\end{align*}
We next bound the terms $|(\Rmnum{1})|$ and $|(\Rmnum{2})|$ separately. For $|(\Rmnum{1})|$, first notice that for any $s\in\calS$, $\lim_{k\to\infty} \psi^\prime(\pi_k(a^*|s)) = \psi^\prime(\pi_\infty(a^*|s))$, giving that $c_k(s) \to 0$. Furthermore, $\lim_{k\to\infty}e_k(s, \tilde a^*) = 0$ for all $\tilde a^* \in \calA^*_s$. Thus there exists a finite time $T_3(\sigma) \geq T_1(\sigma)$ such that for all $s\in\calS$, \begin{align*}\forall\, k \geq T_3(\sigma), \; \forall\, \tilde a^*\in\calA^*_s: \quad &|c_k(s)| + \eta\,|e_k(s, \tilde a^*)| \\
&\leq \min \, \Bigg\{ \psi^\prime \ls( \min_{a^*\in\mathcal{A}_s^*}\pi_\infty(a^*|s) \rs) - \psi^\prime \ls( \min_{a^*\in\mathcal{A}_s^*}\pi_\infty(a^*|s) - \varrho_s \rs), \\
&\;\;\;\;\;\;\;\;\psi^\prime \ls( \max_{a^*\in\mathcal{A}_s^*}\pi_\infty(a^*|s) + \varrho_s \rs) - \psi^\prime \ls( \max_{a^*\in\mathcal{A}_s^*}\pi_\infty(a^*|s) \rs) \Bigg\}, \end{align*}
so that $\psi^\prime(\pi_\infty(\hat a^*|s)) + c_k(s) \in \psi^\prime(\mathcal{I}_s)$. Then we can apply the two-sided bound~\eqref{eq:case-2-bilevel-Lipchitz-2}
\begin{align*}
    |(\Rmnum{1})| &= \bigg| \sum_{\hat a^* \in \calA^*_s} (\psi^\prime)^{-1} [\psi^\prime (\pi_\infty(\hat a^*|s))] - (\psi^\prime)^{-1} \big[ \psi^\prime(\pi_\infty(\hat a^*|s)) + c_k(s) \big] \bigg| \\
    &= \sum_{\hat a^* \in \calA^*_s} \Big| (\psi^\prime)^{-1} [\psi^\prime (\pi_\infty(\hat a^*|s))] - (\psi^\prime)^{-1} \big[ \psi^\prime(\pi_\infty(\hat a^*|s)) + c_k(s) \big] \Big| \\
    &\geq \frac{1}{U_s} \sum_{\hat a^* \in \calA^*_s} |c_k(s)| = \frac{|\calA^*_s|}{U_s} |c_k(s)|,
\end{align*}
where the second equality is due to the fact that the difference $(\psi^\prime)^{-1} \Big[ \psi^\prime\big( \pi_\infty(\hat a^*|s) \big) \Big] - (\psi^\prime)^{-1} \Big[ \psi^\prime(\pi_\infty(\hat a^*|s)) + c_k(s) \Big]$ consistently takes the opposite sign of $c_k(s)$ for all $\hat a^*\in\calA^*_s$. For $|(\Rmnum{2})|$, similarly applying the two-sided bound~\eqref{eq:case-2-bilevel-Lipchitz-2} yields that
\begin{align*}
    |(\Rmnum{2})| &= \bigg|\sum_{\tilde a^* \in \calA^*_s, \, \tilde a^* \neq a^*} (\psi^\prime)^{-1} \big[ \psi^\prime (\pi_\infty(\tilde a^*|s)) + c_k(s) \big] - (\psi^\prime)^{-1} \Big[ \psi^\prime(\pi_\infty(\tilde a^*|s)) + c_k(s) + \eta \, e_k(s, \tilde a^*) \Big]\bigg| \\
    &\leq \sum_{\tilde a^* \in \calA^*_s, \, \tilde a^* \neq a^*} \bigg| (\psi^\prime)^{-1} \big[ \psi^\prime (\pi_\infty(\tilde a^*|s)) + c_k(s) \big] - (\psi^\prime)^{-1} \Big[ \psi^\prime(\pi_\infty(\tilde a^*|s)) + c_k(s) + \eta \, e_k(s, \tilde a^*) \Big]\bigg| \\
    &\leq \frac{1}{L_s} \sum_{\tilde a^* \in \calA^*_s, \, \tilde a^* \neq a^*} \eta \, \big| e_k(s, \tilde a^*) \big| \leq \frac{2\eta(|\calA^*_s| - 1)}{L_s} \sum_{j=k}^\infty \big\| V^* - V^{\pi_j} \big\|_\infty,
\end{align*}
where the last line follows from~\eqref{eq:policy-rate-case-2-bound-for-e}. Combining together with the bound for $|(\Rmnum{1})|$ we get
\begin{align*}
    \forall\, k \geq T_3(\sigma): \quad  &\frac{|\calA^*_s|}{U_s} |c_k(s)| \leq b_s^{\pi_k} + \frac{2\eta(|\calA^*_s| - 1)}{L_s} \sum_{j=k}^\infty \big\| V^* - V^{\pi_j} \big\|_\infty \\
    \Longrightarrow \quad & |c_k(s)| \leq U_s \cdot b^{\pi_k}_s + \frac{2\eta \,U_s}{L_s} \sum_{j=k}^\infty \big\| V^* - V^{\pi_j} \big\|_\infty.
\end{align*}
Notice that $|c_k(s)| = |\psi^\prime(\pi_k(a^*|s)) - \psi^\prime(\pi_\infty(a^*|s))|$. Applying the two-sided bound~\eqref{eq:case-2-bilevel-Lipchitz},
\begin{align*}
    \forall\, k \geq T_3(\sigma): \quad|\pi_k(a^*|s) - \pi_\infty(a^*|s)| \leq \frac{1}{L_s} |c_k(s)| \leq \frac{U_s}{L_s} \cdot b^{\pi_k}_s + \frac{2\eta \,U_s}{L_s^2} \sum_{j=k}^\infty \big\| V^* - V^{\pi_j} \big\|_\infty.
\end{align*}
As the selection of $a^*\in\calA^*_s$ is arbitrary, we obtain that
\begin{align*}
    \forall\, k \geq T_3(\sigma), \; \forall\, a^*\in\calA^*_s: \quad |\pi_k(a^*|s) - \pi_\infty(a^*|s)| \leq \frac{U_s}{L_s} \cdot b^{\pi_k}_s + \frac{2\eta \,U_s}{L_s^2} \sum_{j=k}^\infty \big\| V^* - V^{\pi_j} \big\|_\infty.
\end{align*}
Noticing that $|\pi_k(a^\prime|s) - \pi_\infty(a^\prime|s)| = \pi_k(a^\prime|s) \leq b^{\pi_k}_s$ for any $a^\prime \not \in \calA^*_s$, the proof is completed by $\| \pi_k(\cdot|s) - \pi_\infty(\cdot|s) \|_\infty = \max_{a\in\calA} |\pi_k(a|s) - \pi_\infty(a|s)|$.

\subsection{Proof of Theorem~\ref{thm:case-3-policy-convergence}}
\label{sec:pf:thm:case-3-policy-convergence}
It suffices to prove that $\sum_k \| V^* - V^{\pi_k} \|_\infty < \infty$ for Case~3 and then apply Lemma~\ref{lem:summable-error-gives-policy-convergence}. To this end, for arbitrary $s \in \tilde\calS$, 
\begin{itemize}
    \item if there exists a finite time $T_0 \geq T(\Delta/4)$ such that $b^{\pi_{T_0}}_{s} = 0$, then by Lemma~\ref{lem:gap-metric-increase}~\ref{lem:termination-condition} we have $b^{\pi_k}_{s} = 0$ for all $k \geq T_0$ and thus $\sum_{k \geq 0} b^{\pi_k}_{s} < +\infty$;
    \item if such finite time does not exist, then we have $b^{\pi_k}_{s} > 0$ for all $k \geq T(\Delta/4)$ and thus Lemma~\ref{lem:gap-metric-increase}~\ref{lem:gap-metric-increase-lower} implies that
    \begin{align}  \label{eq:case-3-psi-score-gap}
        \forall\, k \geq T(\Delta/4): \quad G_{k}(s) \geq G_{T(\Delta/4)}(s) + \eta \, \Delta /2 \cdot (k - T(\Delta/4)).
    \end{align}
    Notice that $1-b^{\pi_k}_{s} \geq \pi_k(a^*|s)$ for any $a^* \in \calA^*_{s}$. Thus the inequality above implies that
    \begin{align*}
        \forall\, k \geq T(\Delta/4): \quad \psi^\prime \big( 1 - b^{\pi_k}_{s} \big) &\geq \psi^\prime \Big( {\min_{a^*\in\calA^*_{s}} \, \pi_k(a^*|s)} \Big) \\
        &= {\min_{a^*\in\calA^*_{s}} \, \psi^\prime(\pi_k(a^*|s))} \\
        &\geq \max_{a^\prime \not \in \calA^*_{s}} \, \psi^\prime(\pi_k(a^\prime|s)) + G_{T(\Delta/4)}(s) - \eta \,T(\Delta/4) \Delta_{s} /2 + \eta \, \Delta_{s} /2 \cdot k \\
        &\geq \underbrace{\psi^\prime(0) + G_{T(\Delta/4)}(s) - \eta \,T(\Delta/4) \Delta_{s} /2}_{:= E_{s}} + \eta \, \Delta_{s} /2 \cdot k.
    \end{align*}
    Define $\chi(x) := \psi^\prime(1-x)$, which is continuous and strictly decreasing over $(0,1]$. Denote its inverse function by $\chi^{-1}: [\psi^\prime(0), \; +\infty) \to (0,1]$. Set $\tilde T \geq T(\Delta/4)$ such that $E_{s} + \eta \, \Delta_{s}/2 \cdot k \geq \psi^\prime(0)$ for all $ k \geq \tilde T$. Then applying the inverse function yields that
    \begin{align*}
        \forall\, k \geq \tilde T: \quad b^{\pi_k}_{s} \leq \chi^{-1}\Big( E_{s} + \eta \, \Delta_{s} / 2 \cdot k \Big).
    \end{align*}
    Noticing that $\chi$ is continuous with $\chi(0^+) = \psi^\prime(1^-) = +\infty$, pick $\epsilon > 0$ such that $\chi(x) \geq 0$ over $(0,\epsilon]$. Notice that
    \[\int_0^\epsilon \chi(x) dx = \int_0^\epsilon \psi^\prime(1-x) dx =  \psi(1) - \psi(1-\epsilon) < \infty\]
    by $\dom \, \psi \supseteq [0,1]$. Picking $T \geq \tilde T$ such that $E_{s} + \eta \, \Delta_{s} /2 \cdot k \geq \chi(\epsilon)$ for all $k \geq T$, applying Lemma~\ref{lem:integral-discrimination} using $f = \chi$ gives that
    \begin{align*}
        \sum_{k \geq T} b^{\pi_k}_{s} \leq \sum_{k \geq T} \chi^{-1} \Big( E_{s} + \eta \, \Delta_{s} / 2 \cdot k \Big) < \infty,
    \end{align*}
    implying that $\sum_k b^{\pi_k}_{s} < \infty$.
\end{itemize}
As $b^{\pi_k}_{s^\prime} = 0$ for all $s^\prime \not \in \tilde \calS$, we conclude that 
\begin{align*}
    \sum_{k \geq 0} b^{\pi_k}_{s} < +\infty, \quad \forall\, s \in \calS.
\end{align*}
By Lemma~\ref{lem:sub-optimal-probabilities},
\begin{align} \label{eq:0-finite-1-infinite:error-summable}
    \sum_{k \geq 0} \big\| V^* - V^{\pi_k} \big\|_\infty \leq \sum_{k \geq 0} \frac{1}{(1-\gamma)^2} \sum_{s\in\calS} b^{\pi_k}_{s} = \frac{1}{(1-\gamma)^2} \sum_{s\in\calS} \sum_{k \geq 0} b^{\pi_k}_{s} <+\infty
\end{align}
where the last equality is due to Fubini's theorem.

\subsection{Proof of Theorem~\ref{thm:case-3-finite-time}}
\label{sec:pf:thm:case-3-finite-time}
We first prove the following auxiliary result.
\begin{lemma}  \label{lem:case-3-psi-prime-bounded}
    Let $\{ \pi_k \}_{k \geq 0}$ be the sequence of policies generated by PMD~\eqref{eq:PMD}. Under Case~3 \textup{(}i.e., $\psi^\prime(0^+) > -\infty$ and $\psi^\prime(1^-) = +\infty$\textup{)}, there exists a constant $\beta$ such that
    \begin{align*}
        \forall\, k\geq 0, \; \forall\, s\in\calS_{>1}, \; \forall\, a\in\calA: \quad \psi^\prime(\pi_k(a|s)) \leq \beta.
    \end{align*}
\end{lemma}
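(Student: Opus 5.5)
The idea is to use the policy convergence already established in Theorem~\ref{thm:case-3-policy-convergence}. The finite time horizon and the tail are handled separately. By Theorem~\ref{thm:case-3-policy-convergence}, $\pi_k\to\pi_\infty\in\Pi^*$. By~\eqref{eq:policy-convergence-to-interior}, for every $s\in\calS_{>1}$ and every $a^*\in\calA^*_s$ we have $\pi_\infty(a^*|s)\in\interior\,\dom\,\psi$. Since $|\calA^*_s|\ge 2$ and the optimal probabilities sum to $1$, this forces $\pi_\infty(a^*|s)<1$. Every suboptimal action has $\pi_\infty(a'|s)=0$. Hence
\[
m_s:=\max_{a\in\calA}\pi_\infty(a|s)<1 \quad \text{for each } s\in\calS_{>1}.
\]

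Set $\delta:=\min_{s\in\calS_{>1}}(1-m_s)/2>0$. This is a finite minimum, since $\calS$ is finite. By convergence there is a finite $K$ such that $\|\pi_k(\cdot|s)-\pi_\infty(\cdot|s)\|_\infty\le\delta$ for all $k\ge K$ and $s\in\calS_{>1}$. Then $\pi_k(a|s)\le 1-\delta$ for all such $k$, $s$, $a$. Because $\psi'$ is strictly increasing on $\interior\,\dom\,\psi\ni 1-\delta$, this gives $\psi'(\pi_k(a|s))\le\psi'(1-\delta)<\infty$. The point $1-\delta$ lies in $[0,1)\subseteq\interior\,\dom\,\psi$ in Case~3, because $0\in\interior\,\dom\,\psi$, $[0,1]\subseteq\dom\,\psi$, and the domain is convex.

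For the finitely many indices $k<K$, Lemma~\ref{lem:KKT-cond} gives $\pi_k(a|s)\in\interior\,\dom\,\psi$. We also have $\pi_k(a|s)\le 1$, and $1\notin\interior\,\dom\,\psi$, so $\pi_k(a|s)<1$ and each $\psi'(\pi_k(a|s))$ is a finite real number. Taking
\[
\beta:=\max\Bigl\{\psi'(1-\delta),\;\max_{k<K,\,s\in\calS_{>1},\,a\in\calA}\psi'(\pi_k(a|s))\Bigr\},
\]
a maximum over finitely many finite numbers, gives the claimed bound.

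The only real step is establishing $\pi_\infty(a^*|s)<1$ strictly for $s\in\calS_{>1}$. This rests entirely on the interior property in~\eqref{eq:policy-convergence-to-interior}, inherited from Lemma~\ref{lem:summable-error-gives-policy-convergence} via Theorem~\ref{thm:case-3-policy-convergence}. Everything else is continuity, monotonicity of $\psi'$, and finiteness.
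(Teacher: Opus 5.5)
Your argument is correct, but it is organized differently from the paper's. The paper does not use the convergence $\pi_k\to\pi_\infty$ here. It reaches into the proof of Lemma~\ref{lem:summable-error-gives-policy-convergence} for the uniform bound $\sup_k M_k(s)<\infty$ in~\eqref{eq:bounded-M}. It combines this with the pigeonhole fact $\min_{\tilde a^*\in\calA^*_s}\pi_k(\tilde a^*|s)\le 1/2$, valid since $|\calA^*_s|\ge 2$, to get $\psi'(\pi_k(a^*|s))\le M_k(s)+\psi'(1/2)$ for every $k$ at once. Only the suboptimal actions are handled by a tail argument ($\pi_k(a'|s)\to 0$), which is essentially your prefix-plus-tail reasoning.

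Your route rests on the same underlying fact. It is exactly the boundedness of $M_k(s)$ that gives $\pi_\infty(a^*|s)\in\interior\,\dom\,\psi$ in Step~2 of that proof. You access it through the stated conclusion of Theorem~\ref{thm:case-3-policy-convergence} and~\eqref{eq:policy-convergence-to-interior}. This buys two things: you use only stated results rather than an intermediate inequality inside another proof, and you treat optimal and suboptimal actions uniformly. The price is that your $\beta$ depends on the unknown limit $\pi_\infty$ and the unknown convergence time $K$. The paper's bound for optimal actions, $\sup_k M_k(s)+\psi'(1/2)$, is instead controlled directly by $M_T(s)$ and the summed value errors.

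One small correction to your justification. What rules out $\pi_\infty(a^*|s)=1$ is that $1\notin\interior\,\dom\,\psi$ in Case~3. It is not the fact that the optimal probabilities sum to $1$ over at least two actions. Mass $1$ on one optimal action and $0$ on the others is consistent with the summation constraint, and in Case~3 the zeros lie in $\interior\,\dom\,\psi$.
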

\begin{proof}
    As $\calS_{>1}$ and $\calA$ are finite, it suffices to show that $\sup_{k \geq 0} \, \psi^\prime(\pi_k(a|s)) < +\infty$ for all $s\in\calS_{>1}$ and $a\in\calA$.  For $a^\prime \not \in \calA^*_s$, the global value convergence implies that $\pi_k(a^\prime|s) \to 0$, and thus it is not hard to see that $\sup_{k \geq 0} \psi'(\pi_k(a^\prime|s)) < +\infty$. Now consider the optimal actions. To this end, recall the $\psi^\prime$-score gap of optimal actions,
    \begin{align*}
        \forall\, s\in\calS_{>1}: \quad M_k(s) := \max_{a^* \in \calA^*_s } \, \psi^\prime(\pi_k(a^*|s))  - \min_{\tilde a^* \in\calA^*_s} \, \psi^\prime(\pi_k(\tilde a^*|s)).
    \end{align*}
    For PMD under Case~3, it has been proved that $\sum_k \| V^* - V^{\pi_k} \|_\infty < \infty$ in the proof of Theorem~\ref{thm:case-3-policy-convergence}, see~\eqref{eq:0-finite-1-infinite:error-summable}. From the proof of Lemma~\ref{lem:summable-error-gives-policy-convergence}, we know that $\sum_k \| V^* - V^{\pi_k} \|_\infty < \infty$ gives the boundedness of $M_k(s)$, i.e., $\sup_k M_k(s) < +\infty$, see~\eqref{eq:bounded-M}. By the definition of $M_k(s)$, noticing that $\min_{\tilde a^*\in\calA^*_s} \, \pi_k(\tilde a^*|s) \leq 1/2$, 
    \begin{align*}
        \forall\, s\in\calS_{>1}, \;\forall\, a^*\in\calA^*_s: \quad \psi^\prime(\pi_k(a^*|s)) &\leq \max_{a^*\in\calA^*_s} \, \psi^\prime(\pi_k(a^*|s)) \\
        &= M_k(s) + \min_{\tilde a^*\in\calA^*_s} \, \psi^\prime(\pi_k(\tilde a^*|s)) \\
        &\leq M_k(s) + \psi^\prime  \Big( \min_{\tilde a^*\in\calA^*_s} \, \pi_k(\tilde a^*|s) \Big) \leq M_k(s) + \psi^\prime(1/2).
    \end{align*}
    Taking the supremum,
    \begin{align*}
        \forall\, s\in\calS_{>1}, \;\forall\, a^*\in\calA^*_s: \quad \sup_{k \geq 0} \, \psi^\prime(\pi_k(a^*|s)) \leq \sup_{k \geq 0} \,M_k(s) + \psi^\prime(1/2) < +\infty.
    \end{align*}
    Now just set $\beta := \max_{s\in\calS_{>1}, \, a\in\calA} \, \sup_{k \geq 0} \, \psi^\prime(\pi_k(a|s))$.
    
\end{proof}

\begin{proof}[Proof of Theorem~\ref{thm:case-3-finite-time}]
    For any $s\in\tilde\calS \subseteq \calS_{>1}$, suppose that $b^{\pi_k}_s > 0$ for all $k \geq T(\Delta/4)$. Then by Lemma~\ref{lem:gap-metric-increase}~\ref{lem:gap-metric-increase-lower}, we have $G_k(s) \to +\infty$. However, by Lemma~\ref{lem:case-3-psi-prime-bounded}, $\psi^\prime(\pi_k(a|s)) \in [\psi^\prime(0^+), \; \beta]$, and thus $G_k(s) \leq \beta - \psi^\prime(0^+) < +\infty$, yielding a contradiction. Therefore,  for any $s\in\tilde\calS \subseteq \calS_{>1}$, there exists a finite time $T(\Delta/4) \leq T_{\mathrm{stop}}(s) < +\infty$ such that $b^{\pi_{T_{\mathrm{stop}}(s)}}_s =0$. By Lemma~\ref{lem:gap-metric-increase}~\ref{lem:termination-condition}, we can claim that
    \begin{align*}
        \forall\, s \in \tilde \calS, \; \forall\, k \geq T_{\mathrm{stop}}(s) \quad b^{\pi_k}_s = 0.
    \end{align*}
    For $s\not\in \tilde \calS$, there always holds $b^{\pi_k}_s = 0$ and thus we set $T_{\mathrm{stop}}(s) = 0$. Then
    \begin{align}  \label{eq:case-3:stop-time-in->1-optimal-actions}
        \forall\, s \in \calS_{>1}, \; \forall\, k \geq T_{\mathrm{stop}}(s) \quad b^{\pi_k}_s = 0.
    \end{align}    
    Lemma~\ref{lem:gap-metric-increase}~\ref{lem:termination-condition} implies that PMD terminates in at most $T_{\mathrm{ter}} := \max_{s\in\calS_{>1}} \, T_{\mathrm{stop}}(s)$ iterations, i.e.,
    \begin{align*}
        \forall\, k \geq T_{\mathrm{ter}}: \quad \pi_k = \pi_{T_{\mathrm{ter}}} \in \Pi^*.
    \end{align*}
\end{proof}

\subsection{Proof of Theorem~\ref{thm:case-3-local-convergence}}
\label{sec:pf:thm:case-3-local-convergence}
\textbf{Upper bound:} Pick an arbitrary state $s\in\calS_{=1}$ and denote the optimal action by $a^* \in \calA^*_s = \{ a^* \}$. As $\psi^\prime(1^-) = +\infty$, one has $1 \not\in \interior \, \dom \, \psi$ and thus $\pi_k(a^*|s) < 1$ for all $k\ge 0$. Thus PMD does not terminate at a finite time. Noting that $a^*$ is the unique optimal action, we have $b^{\pi_k}_s = 1 - \pi_k(a^*|s)$, and thus $b^{\pi_k}_s > 0$ holds for all $k \geq 0$. Then following the same argument as for~\eqref{eq:case-3-psi-score-gap} but using $T(\sigma)$ we obtain
\begin{align*}
    \forall\, k \geq T(\sigma): \quad G_k(s) &\geq G_{T(\sigma)}(s) + \eta \, (\Delta_{s} - 2\sigma) \cdot (k - T(\sigma)) \\
    &\geq G_{T(\sigma)}(s) + \eta \, (\Delta_{=1} - 2\sigma) \cdot (k - T(\sigma)),
\end{align*}
where $\Delta_{=1} := \min_{s\in\calS_{=1}}\Delta_s$. Continuing the analysis therein, 
\begin{align}  \label{eq:case-3-b-bound}
    \forall\, k \geq \tilde T(\sigma), \; \forall\, s \in \calS_{=1}: \quad b^{\pi_k}_s \leq \chi^{-1} \big( C_1(\sigma) + \eta \, (\Delta_{=1} - 2\sigma) \cdot k \big),  
\end{align}
where $C_1(\sigma) := \min_{s\in\calS_{=1}}\, E_s(\sigma)$, $E_s(\sigma) := \psi^\prime(0) + G_{T(\sigma)}(s) - \eta \, T(\sigma) \cdot (\Delta_{=1}-2\sigma)$, and $\tilde T(\sigma) \geq T(\sigma)$ is the time step such that $C_1(\sigma) + k\cdot \eta \, (\Delta_{=1} - 2\sigma) \geq \psi^\prime(0)$ for all $k \geq \tilde T(\sigma)$. Following a similar contradiction argument based on Lemmas~\ref{lem:case-3-psi-prime-bounded} and~\ref{lem:gap-metric-increase}~\ref{lem:gap-metric-increase-lower} for equation~\eqref{eq:case-3:stop-time-in->1-optimal-actions}, for any state $\tilde s \in \calS_{>1}$, there exists a finite stopping time $T_{\mathrm{stop}}(\tilde s)$ such that $b^{\pi_k}_{\tilde s} = 0$ for all $k \geq T_{\mathrm{stop}}(\tilde s)$. Setting $T_1(\sigma) := \max\, \big\{ \tilde T(\sigma), \; \{ T_{\mbox{\scriptsize stop}}(\tilde s) \}_{\tilde s \in \calS_{>1}} \big\}$ and using Lemma~\ref{lem:sub-optimal-probabilities} one has
\begin{align*}
    \forall\, k \geq T_1(\sigma), \; \forall\, s^\prime \in \calS: \quad V^*(s^\prime) - V^{\pi_k}(s^\prime)  &\leq \frac{1}{(1-\gamma)^2} \sum_{s\in\calS} d^{\pi_k}_{s^\prime}(s) \cdot b^{\pi_k}_s \\
    &= \frac{1}{(1-\gamma)^2} \sum_{s\in \calS_{=1}} d^{\pi_k}_{s^\prime}(s) \cdot b^{\pi_k}_s \\
    &\leq \frac{1}{(1-\gamma)^2} \sum_{s\in \calS_{=1}} d^{\pi_k}_{s^\prime}(s) \cdot \chi^{-1} \big( C_1(\sigma) + \eta \, (\Delta_{=1} - 2\sigma) \cdot k \big) \\
    &\leq \frac{1}{(1-\gamma)^2}\cdot \chi^{-1} \big( C_1(\sigma) + \eta \, (\Delta_{=1} - 2\sigma) \cdot k \big).
\end{align*}
The arbitrariness of $s^\prime$ gives that
\begin{align*}
    \forall\, k \geq T_1(\sigma): \quad \big\| V^* - V^{\pi_k} \big\|_\infty \leq \frac{1}{(1-\gamma)^2}\cdot \chi^{-1} \big( C_1(\sigma) + \eta \, (\Delta_{=1} - 2\sigma) \cdot k \big).
\end{align*}

\vspace{1em}

\noindent \textbf{Lower bound:} When $k \geq T(\sigma)$ one has $\| V^* - V^{\pi_k} \|_\infty \leq \sigma$, implying that $b^{\pi_k}_{s} \leq \Delta^{-1} \sigma < 1$ when $\sigma < \Delta / 2$ for all $s\in\calS$. Pick $s_0 \in \calS_{=1}$ and $a^\prime_0 \not\in\calA^*_{s_0}$ such that
\begin{align*}
    \Delta_{s_0} = \min_{s\in\calS_{=1}} \; \Delta_s = \Delta_{=1}, \quad V^*(s_0) - Q^*(s_0, a_0^\prime) = \Delta_{s_0} = \Delta_{=1},
\end{align*}
Applying Lemma~\ref{lem:gap-metric-increase}~\ref{lem:gap-metric-increase-upper} to $(s_0, a_0^\prime)$ one obtains
\begin{align*}
    \forall\, k \geq T(\sigma): \quad G_{k+1}(s_0, a_0^\prime) \leq G_k(s_0, a_0^\prime) + \eta \, (\Delta_{=1} + 2\sigma).
\end{align*}
Iterating from $T(\sigma)$ to $k-1$ gives that
\begin{align*}
    \psi^\prime(\pi_k(a^*|s_0)) - \psi^\prime(\pi_k(a^\prime_0|s_0)) \leq \psi^\prime(\pi_{T(\sigma)}(a^*|s_0)) - \psi^\prime(\pi_{T(\sigma)}(a^\prime_0|s_0)) + \eta \,(\Delta_{=1} + 2\sigma) \cdot (k - T(\sigma)).
\end{align*}
Noting that $b^{\pi_k}_{s_0} \leq \Delta^{-1} \sigma$ for $k \geq T(\sigma)$, we have
\begin{align*}
    \forall \, k \geq T(\sigma): \quad \psi^\prime (\pi_k(a^\prime_0 | s_0)) \leq \psi^\prime(b^{\pi_k}_{s_0}) \leq \psi^\prime(\Delta^{-1} \sigma) < \psi^\prime(1/2)
\end{align*}
since $\sigma < \Delta / 2$. Thus
\begin{align*}
    \psi^\prime(1-b^{\pi_k}_{s_0}) = \psi^\prime(\pi_k(a^*|s_0)) &\leq \psi^\prime(1/2) + \psi^\prime(\pi_{T(\sigma)}(a^*|s_0)) - \psi^\prime(\pi_{T(\sigma)}(a^\prime_0|s_0)) + \eta \,(\Delta_{=1} + 2\sigma) \cdot (k - T(\sigma)) \\
    &:= C_2(\sigma) + \eta \, (\Delta_{=1} + 2\sigma) \cdot k.
\end{align*}
Recall that $\chi(x) = \psi^\prime(1-x)$ and $\chi^{-1}: [\psi^\prime(0^+), +\infty ) \to (0,1]$. Take $T_2(\sigma) \geq T(\sigma)$ such that $C_{2}(\sigma) + \eta \, (\Delta_{=1} + 2\sigma) \cdot k \geq \psi^\prime(0^+)$ for all $k \geq T_2(\sigma)$. Applying $\chi^{-1}$ on both sides gives that
\begin{align*}
    \forall\, k \geq T_2(\sigma): \quad b^{\pi_k}_{s_0} \geq \chi^{-1} \big( C_2(\sigma) + \eta \, (\Delta_{=1} + 2\sigma) \cdot k \big).
\end{align*}
Finally, leveraging Lemma~\ref{lem:sub-optimal-probabilities} yields the desired result. 

\subsection{Proof of Theorem~\ref{thm:case-3-policy-convergence-rate}}
\label{sec:pf:thm:case-3-policy-convergence-rate}
Part~\ref{thm:case-3-policy-convergence-rate-optimal=1} follows directly from~\eqref{eq:case-3-b-bound} as $\pi_\infty(\cdot|s) = \pi^*(\cdot|s)$ is unique and thus $\| \pi_k(\cdot|s) - \pi_\infty(\cdot|s) \|_\infty \leq b^{\pi_k}_s$ for $s\in\calS_{=1}$. For Theorem~\ref{thm:case-3-policy-convergence-rate}~\ref{thm:case-3-policy-convergence-rate-optimal>1}, as $0 \in \interior\, \dom \, \psi$, it is possible that $\pi_\infty(a^*|s) = 0$ for optimal action $a^*\in\calA^*_s$. Therefore, we divide the optimal set $\calA^*_s$ into two sets $\calB^*_s$ and $\mathcal{C}^*_s$,
\begin{align*}
    \calB_s^* := \{ a^*\in\calA^*_s: \; \; \pi_\infty(a^*|s) = 0 \}, \quad \calC_s^* := \{ a^*\in\calA^*_s: \;\; \pi_\infty(a^*|s) > 0 \}.
\end{align*}
It is obvious that $\calC^*_s \neq \emptyset$. Furthermore, by Theorem~\ref{thm:case-3-policy-convergence} and equation~\eqref{eq:policy-convergence-to-interior} we know that $|\calC^*_s| > 1$ and $\pi_\infty(a^*|s) \in (0, 1)$ for all $a^*\in\calC^*_s$. Pick a small enough $\epsilon > 0$ such that $-\epsilon \in \interior\, \dom \, \psi$. Define 
\begin{align*}
    \varrho_s := \frac{1}{2} \cdot \min \, \Big\{ \epsilon, \;\; 1 - \max_{a^*\in\calC^*_s} \, \pi_\infty(a^*|s) \Big\} > 0.
\end{align*}
Then define the following interval,
\begin{align*}
    \mathcal{I}_s := \Big[ -\varrho_s, \;\; \max_{a^*\in\calC^*_s} \, \pi_\infty(a^*|s) + \varrho_s \Big] \subsetneq \interior\, \dom \, \psi.
\end{align*}
Similarly, define $U_s := \sup_{x\in\mathcal{I}_s} \psi^{\prime\prime}(x)$, $L_s := \inf_{x\in\mathcal{I}_s} \psi^{\prime\prime}(x)$, and 
\begin{align*}
\psi^\prime(\mathcal{I}_s) := \{ \psi^\prime(x) : \;\; x\in\mathcal{I}_s \} = \Big[ \psi^\prime(-\varrho_s), \;\; \psi^\prime \Big( \max_{a^*\in\calC^*_s} \, \pi_\infty(a^*|s) + \varrho_s \Big) \Big] .
\end{align*}
As $\psi^{\prime\prime}$ is continuous and $0 < \psi^{\prime\prime}(x) < +\infty$ for any $x\in\mathcal{I}_s$, we have $U_s, L_s \in (0, +\infty)$. Additionally, $\psi^\prime$, $(\psi^\prime)^{-1}$ are well defined on $\mathcal{I}_s$, $\psi^\prime(\mathcal{I}_s)$, respectively. It is easy to verify the following two-sided bounds
\begin{alignat*}{2}
    &\forall\, x, \, y \in \mathcal{I}_s: \quad &&L_s \cdot |x-y| \leq \big| \psi^\prime(x) - \psi^\prime(y) \big| \leq U_s \cdot |x-y|, \numberthis \label{eq:eq:case-3-bilevel-Lipchitz} \\
    &\forall\, u, \, v \in \psi^\prime(\mathcal{I}_s): \quad && \frac{1}{U_s} \cdot |u - v| \leq \big| (\psi^\prime)^{-1}(u) - (\psi^\prime)^{-1}(v) \big| \leq \frac{1}{L_s} |u-v|. \numberthis \label{eq:eq:case-3-bilevel-Lipchitz-2}
\end{alignat*}
We proceed by considering the two subcases $\mathcal B_s^*=\emptyset$ and $\mathcal B_s^*\neq\emptyset$. As $\pi_k(a^\prime|s)$ with $a^\prime \not \in \calA^*_s$ can be upper bounded by $b^{\pi_k}_s$, it suffices to establish the upper bound for $|\pi_k(a^*|s) - \pi_\infty(a^*|s)|$ for each situation.

\vspace{1em}

\noindent \textbf{Subcase $\mathcal{B}^*_s = \emptyset$}. The proof for this situation is similar to the one for Theorem~\ref{thm:case-2-policy-convergence-rate}~\ref{thm:case-2-policy-convergence-rate-optimal>1}. As $\calB^*_s = \emptyset$, we have $\calC^*_s = \calA^*_s$. Noticing that $\pi_\infty(a^*|s) > 0$ for all $a^* \in \calC^*_s = \calA^*_s$, there exists a finite time $\tilde T_1$ such that 
\begin{align*}
    \forall\, k \geq \tilde T_1,  \; \forall\, s\in\calS_{>1},\; \forall\, a^*\in\calC^*_s: \quad \pi_k(a^*|s) > 0.
\end{align*}
Now pick an arbitrary optimal action $a^* \in \calA^*_s = \calC^*_s$. As $\pi_k(a^*|s) > 0$ for all $a^*\in\calA^*_s$. Following the same argument in the proof of Theorem~\ref{thm:case-2-policy-convergence-rate}~\ref{thm:case-2-policy-convergence-rate-optimal>1}, we obtain
\begin{align*}
    \forall\, &k \geq \tilde T_1, \; \forall \, \tilde a^* \in \calA^*_s: \\
    &\pi_k(\tilde a^* | s) = (\psi^\prime)^{-1} \bigg[ \psi^\prime(\pi_\infty(\tilde a^*|s)) + \underbrace{\psi^\prime(\pi_k(a^*|s)) - \psi^\prime(\pi_\infty(a^*|s))}_{:= c_k(s)} + \eta \underbrace{\sum_{j=k}^\infty \big[ Q^{\pi_j}(s,a^*) - Q^{\pi_j}(s, \tilde a^*) \big]}_{:= e_k(s, \tilde a^*)} \bigg].  \numberthis \label{eq:policy-rate-case-3-policy-formula}
\end{align*}
Similarly leveraging the fact $\sum_{a^*\in\calA^*_s} \pi_\infty(a^*|s) - \pi_k(a^*|s) = b^{\pi_k}_s$, for $k \geq \tilde T_1$,
\begin{align*}
    b^{\pi_k}_s &= \underbrace{\sum_{\hat a^* \in \calA^*_s} (\psi^\prime)^{-1} [\psi^\prime (\pi_\infty(\hat a^*|s))] - (\psi^\prime)^{-1} \big[ \psi^\prime(\pi_\infty(\hat a^*|s)) + c_k(s) \big]}_{:= (\Rmnum{1})} \\
    &\;\;\;\; + \underbrace{\sum_{\tilde a^* \in \calA^*_s, \, \tilde a^* \neq a^*} (\psi^\prime)^{-1} \big[ \psi^\prime (\pi_\infty(\tilde a^*|s)) + c_k(s) \big] - (\psi^\prime)^{-1} \Big[ \psi^\prime(\pi_\infty(\tilde a^*|s)) + c_k(s) + \eta \, e_k(s, \tilde a^*) \Big]}_{:= (\Rmnum{2})}.
\end{align*}
Thus we obtain that
\begin{align*}
    |(\Rmnum{1})| \leq b^{\pi_k}_s + |(\Rmnum{2})|.
\end{align*}
Similarly, notice that $\lim_{k\to\infty} \psi^\prime(\pi_k(a^*|s)) = \psi^\prime(\pi_\infty(a^*|s))$, giving that $c_k(s) \to 0$. Furthermore, $\lim_{k\to\infty}e_k(s, \tilde a^*) = 0$ for all $\tilde a^* \in \calA^*_s$. Thus there exists a finite time $\tilde T_2$ such that for all $s\in\calS_{>1}$, \begin{align*}\forall\, k \geq \tilde T_2, \; \forall\, \tilde a^*\in\calA^*_s: \quad &\phantom{=\,\,\,}|c_k(s)| + \eta\,|e_k(s, \tilde a^*)| \\
&\leq \min \, \{ \psi^\prime(0) - \psi^\prime(-\varrho_s), \; \psi^\prime(\max_{a^*\in\calC^*_s} \, \pi_\infty(a^*|s) + \varrho_s) - \psi^\prime(\max_{a^*\in\calC^*_s} \, \pi_\infty(a^*|s)) \}, \end{align*}
so that $\psi^\prime(\pi_\infty(\hat a^*|s)) + c_k(s) \in \psi^\prime(\mathcal{I}_s)$. Then repeating the same argument in the proof of Theorem~\ref{thm:case-2-policy-convergence-rate}~\ref{thm:case-2-policy-convergence-rate-optimal>1},
\begin{align*}
    |(\Rmnum{1})| \geq \frac{1}{U_s} \sum_{\hat a^* \in \calA^*_s} |c_k(s)| = \frac{|\calA^*_s|}{U_s} |c_k(s)|,
\end{align*}
\begin{align*}
    |(\Rmnum{2})| \leq \frac{1}{L_s} \sum_{\tilde a^* \in \calA^*_s, \, \tilde a^* \neq a^*} \eta \, \big| e_k(s, \tilde a^*) \big| \leq \frac{2\eta(|\calA^*_s| - 1)}{L_s} \sum_{j=k}^\infty \big\| V^* - V^{\pi_j} \big\|_\infty.
\end{align*}
Thus by $|(\Rmnum{1})| \leq b^{\pi_k}_s + |(\Rmnum{2})|$,
\begin{align*}
    \forall\, k \geq \max\{ \tilde T_1, \, \tilde T_2 \}: \quad  &\frac{|\calA^*_s|}{U_s} |c_k(s)| \leq b_s^{\pi_k} + \frac{2\eta(|\calA^*_s| - 1)}{L_s} \sum_{j=k}^\infty \big\| V^* - V^{\pi_j} \big\|_\infty \\
    \Longrightarrow \quad & |c_k(s)| \leq U_s \cdot b^{\pi_k}_s + \frac{2\eta \,U_s}{L_s} \sum_{j=k}^\infty \big\| V^* - V^{\pi_j} \big\|_\infty.
\end{align*}
Notice that $|c_k(s)| = |\psi^\prime(\pi_k(a^*|s)) - \psi^\prime(\pi_\infty(a^*|s))|$. Applying the two-sided bound~\eqref{eq:eq:case-3-bilevel-Lipchitz},
\begin{align*}
    \forall\, k \geq \max\{ \tilde T_1, \, \tilde T_2 \}: \quad|\pi_k(a^*|s) - \pi_\infty(a^*|s)| \leq \frac{1}{L_s} |c_k(s)| \leq \frac{U_s}{L_s} \cdot b^{\pi_k}_s + \frac{2\eta \,U_s}{L_s^2} \sum_{j=k}^\infty \big\| V^* - V^{\pi_j} \big\|_\infty.
\end{align*}
As the selection of $a^*\in\calA^*_s$ is arbitrary, we obtain that
\begin{align*}
    \forall\, k \geq \max \, \{ \tilde T_1, \tilde T_2 \}, \; \forall\, a^*\in\calA^*_s: \quad |\pi_k(a^*|s) - \pi_\infty(a^*|s)| \leq \frac{U_s}{L_s} \cdot b^{\pi_k}_s + \frac{2\eta \,U_s}{L_s^2} \sum_{j=k}^\infty \big\| V^* - V^{\pi_j} \big\|_\infty.
\end{align*}

\vspace{1em}

\noindent \textbf{Subcase $\calB^*_s \neq \emptyset$}. We first establish the upper bound of $|\pi_k(a^*|s) - \pi_\infty(a^*|s)|$ for $a^* \in \calB^*_s$. Notice that $1 = \sum_{\hat a^*\in \calC^*_s} \pi_\infty(\hat a^*|s) \geq \sum_{\hat a^*\in\calC^*_s} \pi_k(\hat a^*|s)$. Thus for any $k \geq 0$, there exists an action $a^*_k \in \calC^*_s$ such that 
\begin{align*}
    \pi_k(a^*_k | s) \leq \pi_\infty(a^*_k|s).
\end{align*}
As $a^*_k \in \calC^*_s$, we have $\pi_t(a^*_k|s) > 0$ for all $t \geq \tilde T_1$. Then by KKT condition, for $k \geq \tilde T_1$,
\begin{alignat*}{3}
    &\forall\, t \geq k \geq \tilde T_1: \quad &&\psi^\prime(\pi_{t+1}(a^*_k|s)) &&= \psi^\prime(\pi_t(a^*_k|s)) + \eta \, Q^{\pi_t}(s, a^*_k) + \nu_{s}, \\
    &\forall\, t \geq k \geq \tilde T_1, \; \forall\, a^*\in\calB^*_s: \quad &&\psi^\prime(\pi_{t+1}(a^*|s)) &&= \psi^\prime(\pi_t(a^*|s)) + \eta \, Q^{\pi_t}(s, a^*) + \nu_{s} + \lambda_{s}(a^*) \\
    & && &&\geq \psi^\prime(\pi_t(a^*|s)) + \eta \, Q^{\pi_t}(s, a^*) + \nu_{s}.
\end{alignat*}
Subtracting,
\begin{alignat*}{2}
    \forall\, &t \geq k \geq \tilde T_1, \; \forall\, a^* \in \calB^*_s: \\
    & \psi^\prime(\pi_{t+1}(a^*_k|s)) - \psi^\prime(\pi_{t+1}(a^*|s)) &&\leq \psi^\prime(\pi_t(a^*_k|s)) - \psi^\prime(\pi_t(a^*|s)) + \eta \, \big[ Q^{\pi_t}(s, a^*_k) - Q^{\pi_t}(s,a^*) \big]\\
    & && \leq \psi^\prime(\pi_t(a^*_k|s)) - \psi^\prime(\pi_t(a^*|s)) + 2\eta \, \big\| V^* - V^{\pi_t} \big\|_\infty.
\end{alignat*}
Telescoping from $t \geq k$ to $k$,
\begin{align*}
    \forall\, &t > k \geq \tilde T_1, \; \forall\, a^*\in\calB^*_s: \\
    & \psi^\prime(\pi_t(a^*_k|s)) - \psi^\prime(\pi_t(a^*|s)) \leq \psi^\prime(\pi_k(a^*_k|s)) - \psi^\prime(\pi_k(a^*|s)) + 2\eta \sum_{j=k}^{t-1} \big\| V^* - V^{\pi_j} \big\|_\infty.
\end{align*}
It follows that
\begin{alignat*}{2}
    \forall\, k \geq \tilde T_1, \; \forall \, &a^* \in \calB^*_s: \\
    & \psi^\prime(\pi_k(a^*|s)) &&\leq \psi^\prime(\pi_k(a^*_k|s)) - \psi^\prime(\pi_t(a^*_k|s)) + \psi^\prime(\pi_t(a^*|s)) + 2\eta \sum_{j=k}^{t-1} \big\| V^* - V^{\pi_j} \big\|_\infty \\
    & &&\leq \psi^\prime(\pi_k(a^*_k|s)) + \underset{t}{\lim\sup} \; \bigg[ \psi^\prime(\pi_t(a^*|s)) - \psi^\prime(\pi_t(a^*_k|s)) + 2\eta \sum_{j=k}^{t-1} \big\| V^* - V^{\pi_j} \big\|_\infty \bigg] \\
    & &&\stackrel{\mathrm{(limit \; exists)}}{=} \psi^\prime(\pi_k(a^*_k|s)) + \lim_{t} \; \bigg[ \psi^\prime(\pi_t(a^*|s)) - \psi^\prime(\pi_t(a^*_k|s)) + 2\eta \sum_{j=k}^{t-1} \big\| V^* - V^{\pi_j} \big\|_\infty \bigg] \\
    & &&= \psi^\prime(\pi_k(a^*_k|s)) + \psi^\prime(\pi_\infty(a^*|s)) - \psi^\prime(\pi_\infty(a^*_k|s)) + 2\eta \sum_{j=k}^{\infty} \big\| V^* - V^{\pi_j} \big\|_\infty \\
    & &&\leq \psi^\prime(\pi_\infty(a^*|s)) + 2\eta \sum_{j=k}^{\infty} \big\| V^* - V^{\pi_j} \big\|_\infty,
\end{alignat*}
where the last line follows from our selection of $a^*_k$. Noticing that $\pi_k(a^*|s) \to \pi_\infty(a^*|s) = 0$, there exists a finite time $\tilde T_3$ such that $\pi_k(a^*|s) \leq \max_{a^*\in\calC^*_s} \, \pi_\infty(a^*|s) + \varrho_s < 1$ for all $k \geq \tilde T_3$, $s\in\calS_{>1}$, and $a^*\in\calB^*_s$, and then $\pi_k(a^*|s) \in \mathcal{I}_s$. Thus
\begin{alignat*}{2}
    \forall\, &k \geq \max\{ \tilde T_1, \tilde T_3 \}, \; \forall\, a^*\in\calB^*_s: &&\\
    &\pi_k(a^*|s) = |\pi_k(a^*|s) - \pi_\infty(a^*|s)| &&\leq \frac{1}{L_s} \big| \psi^\prime(\pi_k(a^*|s)) - \psi^\prime(\pi_\infty(a^*|s)) \big|  \\
    & &&= \frac{1}{L_s} \big[ \psi^\prime(\pi_k(a^*|s)) - \psi^\prime(\pi_\infty(a^*|s)) \big] \leq \frac{2\eta}{L_s} \sum_{j=k}^\infty \big\| V^* - V^{\pi_j} \big\|_\infty.
\end{alignat*}
Now it remains to bound the optimal actions in $\calC^*_s$. Pick an arbitrary $a^* \in \calC^*_s$. Following the same argument,~\eqref{eq:policy-rate-case-3-policy-formula} also holds for any $a^*\in\calC^*_s$,
\begin{align*}
    \forall\, k \geq \tilde T_1, \; &\forall\, \tilde a^* \in \calC^*_s: \\
    &\pi_k(\tilde a^* | s) = (\psi^\prime)^{-1} \bigg[ \psi^\prime(\pi_\infty(\tilde a^*|s)) + \underbrace{\psi^\prime(\pi_k(a^*|s)) - \psi^\prime(\pi_\infty(a^*|s))}_{:= c_k(s)} + \eta \underbrace{\sum_{j=k}^\infty \big[ Q^{\pi_j}(s,a^*) - Q^{\pi_j}(s, \tilde a^*) \big]}_{:= e_k(s, \tilde a^*)} \bigg].
\end{align*}
Similarly, 
\begin{align*}
    b^{\pi_k}_s &= \sum_{\tilde a^*\in\calC^*_s} \big[ \pi_\infty(\tilde a^*|s) - \pi_k(\tilde a^*|s) \big] + \sum_{\bar a^* \in \calB^*_s} \big [ \pi_\infty(\bar a^*|s) - \pi_k(\bar a^*|s) \big] \\
    &= \pi_\infty(a^*|s) - \pi_k(a^*|s) + \sum_{\tilde a^* \in \calC^*_s, \, \tilde a^* \neq a^*} \big[ \pi_\infty(\tilde a^*|s) - \pi_k(\tilde a^*|s) \big] - \sum_{\bar a^*\in \calB^*_s} \pi_k(\bar a^*|s) \\
    &= \underbrace{\sum_{\hat a^* \in \calC^*_s} (\psi^\prime)^{-1} [\psi^\prime (\pi_\infty(\hat a^*|s))] - (\psi^\prime)^{-1} \big[ \psi^\prime(\pi_\infty(\hat a^*|s)) + c_k(s) \big]}_{:= (\Rmnum{1})} \\
    &\;\;\;\; + \underbrace{\sum_{\tilde a^* \in \calC^*_s, \, \tilde a^* \neq a^*} (\psi^\prime)^{-1} \big[ \psi^\prime (\pi_\infty(\tilde a^*|s)) + c_k(s) \big] - (\psi^\prime)^{-1} \Big[ \psi^\prime(\pi_\infty(\tilde a^*|s)) + c_k(s) + \eta \, e_k(s, \tilde a^*) \Big]}_{:= (\Rmnum{2})} \\
    &\;\;\;\; - \underbrace{\sum_{\bar a^*\in \calB^*_s} \pi_k(\bar a^*|s)}_{:= (\Rmnum{3})}.
\end{align*}
It gives that
\begin{align*}
    |(\Rmnum{1})| \leq b^{\pi_k}_s + |(\Rmnum{2})| + |(\Rmnum{3})|.
\end{align*}
The bounds for $|(\Rmnum{1})|$ and $|(\Rmnum{2})|$ are still valid,
\begin{align*}
    \forall\, k \geq \max \, \{ \tilde T_1, \tilde T_2 \}: \quad  |(\Rmnum{1})| \geq \frac{|\calC^*_s|}{U_s} |c_k(s)|, \quad |(\Rmnum{2})| \leq \frac{2\eta\, (|\calC^*_s| - 1)}{L_s} \sum_{j=k}^\infty \big\| V^* - V^{\pi_j} \big\|_\infty.
\end{align*}
The upper bound for $|(\Rmnum{3})|$ is given by the previous bound for $\bar a^*\in\calB^*_s$,
\begin{align*}
    \forall\, k \geq \max \, \{ \tilde T_1, \tilde T_3 \}: \quad |(\Rmnum{3})| \leq \frac{2\eta |\calB^*_s| }{L_s} \sum_{j=k}^\infty \big\| V^* - V^{\pi_j} \big\|_\infty.
\end{align*}
Let $T_3(\sigma) := \max\{ \tilde T_1, \tilde T_2, \tilde T_3, T_1(\sigma) \}$. Putting all together we obtain
\begin{align*}
    \forall\, k \geq T_3(\sigma): \quad |c_k(s)| \leq U_s \cdot b^{\pi_k}_s + \frac{2\eta \,U_s}{L_s} (1+ |\calA|) \sum_{j=k}^\infty \big\| V^* - V^{\pi_j} \big\|_\infty.
\end{align*}
By applying the two-sided bound and noting the arbitrariness of $a^*\in\calC^*_s$,
\begin{align*}
    \forall \, k \geq T_3(\sigma), \; \forall\, a^*\in\calC^*_s: \quad |\pi_k(a^*|s) - \pi_\infty(a^*|s) | \leq \frac{U_s}{L_s} \cdot b^{\pi_k}_s + \frac{2\eta \,U_s}{L_s^2} (1+ |\calA|) \sum_{j=k}^\infty \big\| V^* - V^{\pi_j} \big\|_\infty.
\end{align*}
In summary, whether or not $\calB^*_s$ is empty, the following upper bounds hold for all $k \geq T_3(\sigma)$:
\begin{alignat*}{2}
    &\forall\, a^\prime \not \in \calA^*_s: \quad &&|\pi_k(a^\prime|s) - \pi_\infty(a^\prime|s)| \leq b^{\pi_k}_s, \\
    &\forall\, a^*\in\calA^*_s: \quad &&|\pi_k(a^*|s) - \pi_\infty(a^*|s) | \leq \frac{U_s}{L_s} \cdot b^{\pi_k}_s + \frac{2\eta \, U_s}{L_s^2} (1+ |\calA|) \sum_{j=k}^\infty \big\| V^* - V^{\pi_j} \big\|_\infty.
\end{alignat*}
This concludes the proof.

\subsection{Proof of Theorem~\ref{thm:case-4-local-convergence} }
\label{sec:pf:thm:case-4-local-convergence}
\textbf{Upper bound:} Notice that $0<b^{\pi_k}_s <1$ holds for $\forall s\in\tilde\calS$ in this case. Using an argument analogous to that in the proof of the upper bound
in Theorem~\ref{thm:case-2-local-convergence}, we obtain
\begin{align*}
    \forall\, k \geq T(\sigma), \; \forall s\in\tilde\calS: \quad {\min_{a^*\in\calA^*_s} \, \psi^\prime(\pi_k(a^*|s))} - \max_{a^\prime \not\in \calA^*_s} \, \psi^\prime(\pi_k(a^\prime | s)) &\geq  G_{T(\sigma)}(s) + \eta \, (\Delta - 2\sigma) \cdot (k - T(\sigma)) \\
    &= \underbrace{G_{T(\sigma)}(s) - \eta \, (\Delta - 2\sigma) \cdot T(\sigma)}_{:= C_s(\sigma)} + \eta \, (\Delta - 2\sigma) \cdot k \\
    &\geq C_1(\sigma) + \eta \, (\Delta - 2\sigma) \cdot k,
\end{align*}
where $C_1(\sigma) := \min_{s\in\tilde\calS} C_s(\sigma)$ and $|C_1(\sigma)| < \infty$. This inequality implies that for any $k \geq T(\sigma)$ and $s\in \tilde \calS$,
\begin{align*}
    {\min_{a^*\in\calA^*_s} \; \psi^\prime(\pi_k(a^*|s))} \geq \frac{1}{2} \big( C_1(\sigma) + \eta \, (\Delta - 2\sigma) \cdot k \big) \;\; \mbox{or} \;\; \max_{a^\prime \not\in \calA^*_s} \, \psi^\prime(\pi_k(a^\prime | s)) \leq - \frac{1}{2} \big( C_1(\sigma) + \eta \, (\Delta - 2\sigma) \cdot k \big).
\end{align*}
By the monotonicity of $\psi^\prime$,
\begin{align*}
    \psi^\prime(1-b^{\pi_k}_s) \geq \psi^\prime\big( {\min\nolimits_{a^*\in\calA^*_s} \, \pi_k(a^*|s)} \big) = \min_{a^*\in\calA^*_s} \, \psi^\prime(\pi_k(a^*|s)).
\end{align*}
Consequently, for any $k \geq T(\sigma)$ and $s\in \tilde \calS$,
\begin{align*}
    \psi^\prime(1-b^{\pi_k}_s) \geq \frac{1}{2} \big( C_1(\sigma) + \eta \, (\Delta - 2\sigma) \cdot k \big) \;\; \mbox{or} \;\;\psi^\prime(\pi_k(a^\prime|s)) \leq -\frac{1}{2} \big( C_1(\sigma) + \eta \, (\Delta - 2\sigma) \cdot k \big), \; \forall\, a^\prime \not \in \calA^*_s.
\end{align*}
As $\varphi(x) := -\psi^\prime(x)$ and $\chi(x) := \psi^\prime(1-x)$ are both strictly decreasing functions from $(0,1)$ to the whole real line, applying their inverse functions to the above two inequalities yields
\begin{align*}
    \forall\, s\in\tilde\calS, \; k \geq T(\sigma): \quad & b^{\pi_k}_s \leq {\chi^{-1} \bigg( \frac{C_1(\sigma)}{2} + k \cdot \frac{\eta\, (\Delta - 2\sigma)}{2} \bigg)} \;\; \mbox{or} \;\; b^{\pi_k}_s \leq |\calA| \cdot {\varphi^{-1} \bigg( \frac{C_1(\sigma)}{2} + k \cdot \frac{\eta \, (\Delta - 2 \sigma)}{2} \bigg)} \end{align*}
    Notice that $\chi^{-1}, \varphi^{-1}: (-\infty, +\infty) \to (0,1)$, 
    \begin{align} \label{eq:Case-4-bound-for-b}
    b^{\pi_k}_s \leq |\calA| \cdot \chi^{-1} \bigg( \frac{C_1(\sigma)}{2} + k \cdot \frac{\eta\, (\Delta - 2\sigma)}{2} \bigg) + \varphi^{-1} \bigg( \frac{C_1(\sigma)}{2} + k \cdot \frac{\eta \, (\Delta - 2 \sigma)}{2} \bigg).
\end{align}
Since $b_s^{\pi_k}=0$ when $s\notin\tilde{\mathcal{S}}$,  applying Lemma~\ref{lem:sub-optimal-probabilities} gives the main result.

\vspace{1em}

\noindent \textbf{Lower bound:} As in the proof of the lower bound in Theorem~\ref{thm:case-2-local-convergence}, pick $s_0 \in \tilde\calS$ and $a^\prime_0 \not\in\calA^*_{s_0}$ such that
\begin{align*}
    V^*(s_0) - Q^{*}(s_0, a_0^\prime) = \Delta.
\end{align*}
As $0 \not\in \interior\, \dom \, \psi$, by Lemma~\ref{lem:KKT-cond} there still holds $\pi_k(a|s_0) > 0$ for all $k$, $a$, and thus $b^{\pi_k}_{s_0} < 1$. By Lemma~\ref{lem:gap-metric-increase}, there holds
\begin{align*}
    \max_{a^* \in \calA^*_{s_0}} \, \psi^\prime (\pi_{k}(a^*|s_0)) - \psi^\prime(\pi_k(a^\prime_0 | s_0)) \leq G_{T(\sigma)}(s_0, a_0^\prime) + \eta \, (\Delta + 2\sigma)\cdot (k - T(\sigma)),\quad \forall\, k\geq T(\sigma).
\end{align*}
In addition,  $b^{\pi_k}_{s_0} \leq \Delta^{-1} \| V^* - V^{\pi_k} \|_\infty \leq \sigma / \Delta < 1/2$ for all $k \geq T(\sigma)$ by Lemma~\ref{lem:sub-optimal-probabilities}. It follows that \[\max_{a^* \in \calA^*_{s_0}} \, \pi_k(a^*|s_0) \geq \frac{1}{|\calA^*_{s_0}|} (1-b^{\pi_k}_{s_0}) \geq \frac{1}{2|\calA|}.\] Thus by the monotonicity of $\psi^\prime$,
\begin{gather*}
    \max_{a^*\in\calA^*_{s_0}} \, \psi^\prime (\pi_k(a^*|s_0)) = \psi^\prime \Big( \max_{a^*\in\calA^*_{s_0}} \, \pi_k(a^*|s_0)  \Big) \geq \psi^\prime\bigg( \frac{1}{2|\calA|} \bigg).
\end{gather*}
Furthermore,
\begin{gather*}
    \psi^\prime(\pi_k(a^\prime_0 |s_0)) \leq \psi^\prime \bigg( \sum_{a^\prime \not \in \calA^*_{s_0}} \pi_k(a^\prime|s_0) \bigg) = \psi^\prime \big( b^{\pi_k}_{s_0} \big).
\end{gather*}
Putting them together we obtain
\begin{align*}
    \forall\, k \geq T(\sigma): \quad &\psi^\prime \bigg( \frac{1}{2|\calA|} \bigg) - \psi^\prime \big( b^{\pi_k}_{s_0} \big) \leq G_{T(\sigma)}(s_0, a_0^\prime)- \eta \, (\Delta + 2\sigma)\cdot T(\sigma) + \eta \, (\Delta + 2\sigma) \cdot k  \\
    \Longrightarrow \quad & -\psi^\prime (b^{\pi_k}_{s_0}) \leq \underbrace{G_{T(\sigma)}(s_0, a_0^\prime)- \eta \, (\Delta + 2\sigma)\cdot T(\sigma) - \psi^\prime\bigg( \frac{1}{2|\calA|} \bigg)}_{:= C_2(\sigma)} + \eta \, (\Delta + 2\sigma) \cdot k
\end{align*}
Applying the inverse function of $\varphi$ gives that
\begin{align*}
    \forall\, k \geq T(\sigma): \quad b^{\pi_k}_{s_0} \geq \varphi^{-1} \big( C_2(\sigma) + \eta \, (\Delta + 2\sigma) \cdot k \big).
\end{align*}
By Lemma~\ref{lem:sub-optimal-probabilities} one obtains the desired result.

\subsection{Proof of Theorem~\ref{thm:case-4-policy-convergence} }
\label{sec:pf:thm:case-4-policy-convergence}
Similar to the proof of Theorem~\ref{thm:case-2-policy-convergence}, we set $\sigma = \Delta / 4$ and Theorem~\ref{thm:case-4-local-convergence} yields
\begin{align*}
    \forall\, k \geq T(\Delta/4): \quad \big\| V^* - V^{\pi_k} \big\|_\infty &\leq \frac{|\calA|}{(1-\gamma)^2} \cdot \bigg\{ \varphi^{-1} \bigg( \frac{C_1(\Delta/4)}{2} + k \cdot \frac{\eta \, \Delta}{4}  \bigg)+\chi^{-1} \bigg( \frac{C_1(\Delta/4)}{2} + k \cdot \frac{\eta \, \Delta}{4}  \bigg) \bigg\}.
\end{align*}
Notice that both $\varphi$ and $\chi$ are continuous and strictly decreasing in Case~4 with $\varphi(0^+) = +\infty$ and $\chi(0^+) = +\infty$. Thus there exists $\epsilon > 0$ such that $\varphi(x) \geq 0$ and $\chi(x) \geq 0$ on $(0,\epsilon]$. The integrals are finite as well,
\begin{gather*}
    \int_0^\epsilon \varphi(x) dx = \lim_{t\downarrow 0} \int_t^\epsilon \varphi(x) dx =  \lim_{t\downarrow 0} \big[ \psi(t) - \psi(\epsilon) \big] = \psi(0) - \psi(\epsilon) < \infty, \\
    \int_0^\epsilon \chi(x) dx = \lim_{t\downarrow 0} \int_t^\epsilon \chi(x)dx =  \lim_{t\downarrow 0} \big[ \psi(1-t) - \psi(1-\epsilon) \big] = \psi(1) - \psi(1-\epsilon) < \infty.
\end{gather*}
Setting $T \geq T(\Delta/4)$ such that $k \cdot \eta \, \Delta / 4 + C_1(\Delta/4) / 2 \geq \varphi(\epsilon) + \chi(\epsilon)$ for all $k \geq T$, Lemma~\ref{lem:integral-discrimination} implies that
\begin{align*}
    \sum_{k \geq T} \varphi^{-1} \bigg( \frac{C_1(\Delta/4)}{2} + k \cdot \frac{\eta\,\Delta}{4} \bigg) < \infty, \quad \sum_{k \geq T} \chi^{-1} \bigg( \frac{C_1(\Delta/4)}{2} + k \cdot \frac{\eta\,\Delta}{4} \bigg) < \infty,
\end{align*}
giving that
\begin{align*}
    \sum_{k \geq T} \big\| V^* - V^{\pi_k} \big\|_\infty \leq \frac{|\calA|}{(1-\gamma)^2} \bigg( \sum_{k \geq T} \varphi^{-1} \bigg( \frac{C_1(\Delta/4)}{2} + k \cdot \frac{\eta\,\Delta}{4} \bigg) + \sum_{k \geq T} \chi^{-1} \bigg( \frac{C_1(\Delta/4)}{2} + k \cdot \frac{\eta\,\Delta}{4} \bigg)  \bigg) < \infty.
\end{align*}
The proof is completed by invoking Lemma~\ref{lem:summable-error-gives-policy-convergence}. 


\section{Proofs of technical lemmas}
\label{sec:proofs-of-lemmas}

\subsection{Proof of Lemma~\ref{lem:summable-error-gives-policy-convergence}}
\label{sec:pf:lem:summable-error-gives-policy-convergence}
It is not hard to see that $\psi$ satisfying Assumption~\ref{ass:psi-Legendre} is both essentially smooth and essentially strictly convex, and thus it is Legendre and so is $h$~\citep{bauschke1997legendre,bauschke2001essential}. The following two lemmas for Legendre functions in~\citet{bauschke1997legendre} will be utilized in the proof of Lemma~\ref{lem:summable-error-gives-policy-convergence}.

\begin{lemma}[Theorems~3.8.(iii) and 3.9~\protect{\citep{bauschke1997legendre}}]  \label{lem:thm-3.8-iii-3.9-of-bauschke}
    Suppose that $h: \mathbb{R}^{n} \to \mathbb{R}$ is Legendre. Then
    \begin{align*}
        \left. \begin{array}{r}
             \{x_n\} \; \subseteq \; \dom \, h, \;\; x_n \to x \in \dom \, h, \\
             \{y_n\} \; \subseteq \; \interior\,\dom \, h, \;\; y_n \to y \in \dom \, h, \\
             \{ x,y \} \cap \interior \, \dom \, h \neq \emptyset , \; D_h(x_n \, \| \, y_n) \to 0
        \end{array} \right\} \Rightarrow D_h(x \, \| \, y) = 0 \; \mbox{and } \; x=y.
    \end{align*}
\end{lemma}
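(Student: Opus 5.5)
The proof splits into two cases according to which of $x,y$ lies in $\interior\,\dom\,h$. The case $x\in\interior\,\dom\,h$ is reduced to the case $y\in\interior\,\dom\,h$ by a blow-up argument. The tools are standard consequences of $h$ being Legendre, i.e.\ closed proper convex, essentially smooth and essentially strictly convex:
\begin{enumerate}[label=(\roman*)]
\item $h$ is continuous on $\interior\,\dom\,h$;
\item $\nabla h$ is continuous on $\interior\,\dom\,h$, since a differentiable convex function is $C^1$ there;
\item $h$ is lower semicontinuous on $\mathbb{R}^n$;
\item $\|\nabla h(y_n)\|\to\infty$ whenever $y_n\in\interior\,\dom\,h$ converges to a boundary point;
\item $h$ is strictly convex on $\interior\,\dom\,h$.
\end{enumerate}

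\textbf{Step 1 (the case $y\in\interior\,\dom\,h$).} By (i) and (ii), $h(y_n)\to h(y)$ and $\nabla h(y_n)\to\nabla h(y)$. By (iii), $\liminf_n h(x_n)\ge h(x)$. Hence
\[
0=\lim_n D_h(x_n\,\|\,y_n)\ \ge\ h(x)-h(y)-\inner{\nabla h(y)}{x-y}=D_h(x\,\|\,y)\ \ge 0,
\]
so $D_h(x\,\|\,y)=0$.

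To get $x=y$, set $g(t):=h(y+t(x-y))-h(y)-t\inner{\nabla h(y)}{x-y}$ for $t\in[0,1]$. The function $g$ is convex and nonnegative, with $g(0)=0$ and $g(1)=D_h(x\,\|\,y)=0$, so $g\equiv0$ on $[0,1]$. Thus $h$ is affine on the segment $[y,x]$. Since $y$ is interior and $x\in\dom\,h$, the half-open segment $[y,x)$ lies in $\interior\,\dom\,h$ (the line-segment principle). An affine restriction on a nondegenerate segment inside the interior contradicts (v) unless $x=y$.

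\textbf{Step 2 (the case $x\in\interior\,\dom\,h$).} It suffices to show that $y\in\interior\,\dom\,h$, after which Step 1 applies. Suppose instead that $y\in\mathrm{bd}\,\dom\,h$.
\begin{itemize}
\item Pick $r>0$ with $\bar B(x,r)\subseteq\interior\,\dom\,h$, and let $M:=\max_{\bar B(x,r)}h<\infty$, which is finite by (i).
\item For large $n$ we have $\|x_n-x\|\le r/2$, so $x_n+u\in \bar B(x,r)$ for every $\|u\|\le r/2$.
\item The subgradient inequality at $y_n$ gives $h(x_n+u)\ge h(y_n)+\inner{\nabla h(y_n)}{x_n+u-y_n}$.
\item Choosing $u=\tfrac r2\,\nabla h(y_n)/\|\nabla h(y_n)\|$ yields
\[
D_h(x_n\,\|\,y_n)\ \ge\ h(x_n)-M+\tfrac r2\,\|\nabla h(y_n)\|.
\]
\end{itemize}
Since $x_n\to x$ with $x$ interior, (i) gives $h(x_n)\to h(x)$, so $h(x_n)$ is bounded below. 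By (iv), $\|\nabla h(y_n)\|\to\infty$. Therefore $D_h(x_n\,\|\,y_n)\to\infty$, contradicting $D_h(x_n\,\|\,y_n)\to0$. So $y\in\interior\,\dom\,h$, and Step 1 concludes.

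The main obstacle is Step 2: the blow-up of $\nabla h(y_n)$ has to be converted into divergence of the Bregman divergence with a moving first argument $x_n$. The uniform-ball trick above handles this because $x$ is interior. The remaining facts (i)–(v) are textbook properties of Legendre functions \citep{Rockafellar1970convex,bauschke1997legendre} and are cited rather than reproved.
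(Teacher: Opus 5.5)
Your proof is correct. It differs from the paper only in that the paper gives no proof: it imports the statement from Bauschke--Borwein, whereas you derive it directly from five elementary properties of Legendre functions, splitting on which of $x,y$ is interior.

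\textbf{Case $y\in\interior\,\dom\,h$.} Lower semicontinuity of $h$, together with continuity of $h$ and $\nabla h$ on the interior, passes $D_h(x_n\,\|\,y_n)\to 0$ to the limit. The convex function $g$ on $[0,1]$, combined with the line-segment principle and strict convexity, then forces $x=y$.

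\textbf{Case $x\in\interior\,\dom\,h$.} Your uniform-ball estimate $D_h(x_n\,\|\,y_n)\ge h(x_n)-M+\tfrac r2\|\nabla h(y_n)\|$ turns the boundary blow-up of $\nabla h$ into blow-up of the divergence, so $y$ cannot lie on the boundary. This has a useful side effect: it makes the conclusion $D_h(x\,\|\,y)=0$ well posed under the paper's convention that the second argument lies in $\dom\,\nabla h$.

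\textbf{Comparison.} The citation buys brevity and places the result inside a general theory of Bregman distances. Your route buys transparency. In the one-dimensional setting where the paper actually uses the lemma (Step~4 of the proof of Lemma~\ref{lem:summable-error-gives-policy-convergence}, with $h=\psi$), every property you invoke is immediate. Closedness, the boundary blow-up of $\psi'$, and strict convexity come from Assumption~\ref{ass:psi-Legendre}. Continuity of $\psi'$ on $\interior\,\dom\,\psi$ is already noted in Section~\ref{sec:preliminary}, and continuity of $\psi$ there is standard.

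In that application the first argument is the constant $\pi_\infty(a^*|s)\in\interior\,\dom\,\psi$. So only your second case followed by your first is needed, and the moving-first-argument difficulty you identify as the main obstacle does not arise there.
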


\begin{lemma}[Proposition~3.3~\protect{\citep{bauschke1997legendre}}]  \label{lem:pro-3.3-of-baischke}
    Suppose that $\psi: \mathbb{R} \to \mathbb{R}$ is Legendre. Then
    \begin{align*}
        \left. \begin{array}{r}
             \{y_n\} \subseteq \interior \, \dom \, \psi, \\
             y_n \to y \in \dom \, \psi,
        \end{array} \right\} \Rightarrow d_\psi(y \, \| \, y_n) \to 0.
    \end{align*}
\end{lemma}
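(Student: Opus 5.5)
We would prove this directly from the explicit form
\[
d_\psi(y\,\|\,y_n)=\psi(y)-\psi(y_n)-\psi'(y_n)\,(y-y_n),
\]
splitting according to whether the limit $y$ lies in $\interior\,\dom\,\psi$ or in $\mathrm{bd}\,\dom\,\psi$. Throughout we use two facts. First, $\psi'$ is continuous and strictly increasing on $\interior\,\dom\,\psi$, as noted after Assumption~\ref{ass:psi-Legendre}. Second, a closed proper convex function on $\mathbb{R}$ is continuous relative to its (interval) domain, so $\psi(y_n)\to\psi(y)$ whenever $y_n\to y\in\dom\,\psi$ with $y_n\in\dom\,\psi$.

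\textbf{Interior case.} If $y\in\interior\,\dom\,\psi$, then $\psi(y_n)\to\psi(y)$. Moreover $\psi'(y_n)\to\psi'(y)$ is bounded while $y-y_n\to0$, so every term of $d_\psi(y\,\|\,y_n)$ tends to $0$.

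\textbf{Boundary case.} If $y\in\mathrm{bd}\,\dom\,\psi$, the domain is an interval, so $y$ is an endpoint. Without loss of generality $y$ is the left endpoint and $y_n>y$; the right endpoint is symmetric after reflecting $x\mapsto -x$. Since $\psi(y_n)-\psi(y)\to0$, it suffices to show that $\psi'(y_n)(y_n-y)\to0$.

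\begin{enumerate}
\item For $c\in\interior\,\dom\,\psi$ and $t\in(y,c)$, the fundamental theorem of calculus for the $C^1$ function $\psi$ on $[t,c]$ gives $\int_t^c\psi'=\psi(c)-\psi(t)$. This tends to $\psi(c)-\psi(y)$, which is finite, as $t\downarrow y$. Since $\psi'$ is monotone near $y$, and hence of constant sign there, the improper integral $\int_y^c|\psi'(x)|\,dx$ is finite.
\item Assumption~\ref{ass:psi-Legendre}\ref{ass:psi-Legendre:smooth} gives $\psi'(t)\to-\infty$ as $t\downarrow y$, because $\psi'$ is increasing with $|\psi'|\to\infty$. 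So there is $\delta>0$ with $\psi'<0$ on $(y,y+\delta)$, and $|\psi'|$ is decreasing there.
\item For large $n$ we have $y_n<y+\delta$. For every $t\in(y,y_n)$ this gives $|\psi'(t)|\ge|\psi'(y_n)|$. Hence
\[
|\psi'(y_n)|\,(y_n-y)\le\int_y^{y_n}|\psi'(x)|\,dx .
\]
The right-hand side tends to $0$ by the integrability in step~1 (absolute continuity of the integral), which proves the claim.
\end{enumerate}

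\textbf{Main obstacle.} The only genuinely delicate point is the boundary case. There $\psi'(y_n)$ blows up, so one must show that it blows up slowly enough that $\psi'(y_n)(y_n-y)$ still vanishes. The monotonicity-plus-integrability comparison in step~3 handles this. It relies on $\psi(y)$ being finite and on the continuity of $\psi$ along the domain, which follow from $\psi$ being closed and convex.
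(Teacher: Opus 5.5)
Your proof is correct. The paper itself gives no argument for this lemma. It is quoted from Bauschke and Borwein and used only in Step~3 of the proof of Lemma~\ref{lem:summable-error-gives-policy-convergence}, so your proposal is a self-contained substitute rather than a variant of something in the text. Your boundary-case step~1 is the same computation the paper uses in the proofs of Theorems~\ref{thm:case-2-policy-convergence} and~\ref{thm:case-4-policy-convergence} to get $\int_0^\epsilon\varphi=\psi(0)-\psi(\epsilon)<\infty$.

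Where you depart from the standard, shorter argument is that you bound the two terms of $d_\psi(y\,\|\,y_n)$ separately, although they have opposite signs. Once your step~2 gives $\psi'(y_n)<0$ with $y_n>y$, the linear term $-\psi'(y_n)(y-y_n)=\psi'(y_n)(y_n-y)$ is negative. Also $d_\psi(y\,\|\,y_n)\ge 0$ by the gradient inequality at $y_n\in\interior\,\dom\,\psi$. Hence
\[
0\le d_\psi(y\,\|\,y_n)\le \psi(y)-\psi(y_n)\to 0 .
\]
This needs only convexity and continuity of $\psi$ on its domain. It uses no fundamental theorem of calculus, no improper integral, and no monotonicity of $|\psi'|$, and it yields $\psi'(y_n)(y_n-y)\to 0$ as a by-product.

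In fact, on that interval $\psi(y)-\psi(y_n)=\int_y^{y_n}|\psi'(x)|\,dx$. So the sandwich bound is exactly half of the bound $2\int_y^{y_n}|\psi'|$ that your separate estimates produce. Your steps~1 and~3 therefore buy no extra information here.

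One small ordering issue: step~1 uses the constant sign of $\psi'$ near $y$, which you only establish in step~2 (or which follows from monotonicity on a small enough interval). This is harmless, but the two steps should be swapped.
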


\noindent The following lemma states the continuity of $V^\pi$ with respect to $\pi$.
\begin{lemma}[Continuity of $V^\pi$]  \label{lem:V-continuity}
    The state value function $V^\pi$ is continuous with respect to the policy $\pi$.
\end{lemma}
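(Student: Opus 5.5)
The plan is to write $V^\pi$ in closed form through the Bellman equation~\eqref{eq:V-Bellman-eq} and observe that it is a composition of continuous maps of the policy entries. For a policy $\pi\in\Pi$, define the state transition matrix $P^\pi\in\mathbb{R}^{|\calS|\times|\calS|}$ and the expected reward vector $r^\pi\in\mathbb{R}^{|\calS|}$ by
\begin{align*}
    P^\pi(s,s^\prime) := \sum_{a\in\calA} \pi(a|s)\, P(s^\prime|s,a), \qquad r^\pi(s) := \sum_{a\in\calA} \pi(a|s)\, r(s,a).
\end{align*}
Both are linear, and hence continuous, in the finite-dimensional vector $(\pi(a|s))_{s,a}$. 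The Bellman equation~\eqref{eq:V-Bellman-eq} reads $V^\pi = r^\pi + \gamma P^\pi V^\pi$. It remains to show that $I-\gamma P^\pi$ is invertible, so that $V^\pi = (I-\gamma P^\pi)^{-1} r^\pi$.

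For invertibility, note that $P^\pi$ is row-stochastic, so $\|\gamma P^\pi\|_{\infty\to\infty} = \gamma < 1$. The Neumann series therefore converges, and $I-\gamma P^\pi$ is invertible with $\|(I-\gamma P^\pi)^{-1}\|_{\infty\to\infty}\le 1/(1-\gamma)$, uniformly over $\Pi$. Matrix inversion is continuous on the open set of invertible matrices, since by Cramer's rule the inverse is a rational function of the entries with nonvanishing determinant. So $\pi\mapsto (I-\gamma P^\pi)^{-1}$ is continuous on $\Pi$, and multiplying by the continuous $r^\pi$ gives continuity of $V^\pi$.

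There is also a quantitative alternative that avoids citing continuity of inversion. From the resolvent identity
\begin{align*}
    V^\pi - V^{\tilde\pi} = (I-\gamma P^\pi)^{-1}\big[(r^\pi - r^{\tilde\pi}) + \gamma (P^\pi - P^{\tilde\pi}) V^{\tilde\pi}\big],
\end{align*}
together with $\|V^{\tilde\pi}\|_\infty\le 1/(1-\gamma)$, one obtains a Lipschitz bound $\|V^\pi - V^{\tilde\pi}\|_\infty \le \frac{1}{(1-\gamma)^2}\max_s\|\pi(\cdot|s)-\tilde\pi(\cdot|s)\|_1$. The same argument, via $Q^\pi = r + \gamma P V^\pi$, gives continuity of $Q^\pi$. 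There is no real obstacle here. The only point needing care is the uniform invertibility of $I-\gamma P^\pi$, which the discount factor $\gamma<1$ provides.
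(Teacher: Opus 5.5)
Your proposal is correct. Its quantitative second half is essentially the paper's own proof in different notation, while your primary route is a genuinely different, qualitative argument. The paper uses the Bellman operator $\calT^\pi V = r^\pi + \gamma P^\pi V$ and its $\gamma$-contraction to get $\|V^\pi - V^{\tilde\pi}\|_\infty \le \gamma\|V^\pi - V^{\tilde\pi}\|_\infty + \|\calT^\pi V^{\tilde\pi} - \calT^{\tilde\pi} V^{\tilde\pi}\|_\infty$, and then rearranges. The quantity $\calT^\pi V^{\tilde\pi} - \calT^{\tilde\pi} V^{\tilde\pi}$ is exactly the bracket in your resolvent identity, so dividing by $1-\gamma$ there is the same step as your bound $\|(I-\gamma P^\pi)^{-1}\|_{\infty\to\infty}\le 1/(1-\gamma)$. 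Your constant $1/(1-\gamma)^2$ with respect to $\max_s\|\pi(\cdot|s)-\tilde\pi(\cdot|s)\|_1$ is slightly sharper than the paper's. The paper carries an unnecessary factor $|\calS|$ in its bound on $\|P^\pi - P^{\tilde\pi}\|_\infty$, even though the row sums only see $\|\pi(\cdot|s)-\tilde\pi(\cdot|s)\|_1$. Your primary route writes $V^\pi = (I-\gamma P^\pi)^{-1} r^\pi$ and invokes continuity of inversion via Cramer's rule. It gives no explicit modulus, but it actually shows $V^\pi$ is a rational, hence smooth, function of the policy entries. It also suffices for the lemma's only use in the paper: passing $V^{\pi_{k_i}} \to V^{\pi_\infty}$ along a convergent subsequence in Step~2 of the proof of Lemma~\ref{lem:summable-error-gives-policy-convergence}. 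The contraction/resolvent argument adds an explicit Lipschitz constant at no extra cost.
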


\begin{proof}
   This is a standard fact for finite discounted MDPs, and we  provide a proof for completeness. Define
    \begin{alignat*}{2}
        \forall\, \pi \in \Pi: \quad &r^\pi \in \mathbb{R}^{|\calS|}: \, &&r^\pi(s) = \sum_{a\in\calA} \pi(a|s) \cdot r(s,a), \\
        & P^\pi \in \mathbb{R}^{|\calS|\times|\calS|}: \quad && P^{\pi}(s, s^\prime) = \sum_{a\in\calA} \pi(a|s) \cdot P(s^\prime|s,a).
    \end{alignat*}
    Further define the following Bellman operator,
    \begin{align*}
        \forall\, \pi \in \Pi, \; \forall\, V \in \mathbb{R}^{|\calS|}: \quad \calT^\pi V := r^\pi + \gamma \, P^{\pi} V.
    \end{align*}
    Bellman equation~\eqref{eq:V-Bellman-eq} gives that
    \begin{align*}
        \forall\, \pi \in \Pi: \quad \calT^\pi V^\pi = V^\pi.
    \end{align*}
    Noting that $\| P^\pi \|_\infty \leq 1$, the Bellman operator is $\gamma$-contraction with respect to the $\ell_\infty$-norm,
    \begin{align*}
        \forall\, \pi \in \Pi, \; \forall\, V, \, V^\prime \in \mathbb{R}^{|\calS|}: \quad \big\| \calT^\pi V - \calT^\pi V^\prime \big\|_\infty \leq \gamma \, \big\| V - V^\prime \big\|_\infty.
    \end{align*}
    Additionally, it is straightforward to verify that
    \begin{align*}
        \forall\, \pi, \, \pi^\prime \in \Pi: \quad \big\| P^{\pi} - P^{\pi^\prime} \big\|_\infty \leq |\calS||\calA| \cdot \big\| \pi - \pi^\prime \big\| _\infty, \;\; \big\| r^\pi - r^{\pi^\prime} \big\|_\infty \leq |\calA| \cdot \big\| \pi - \pi^\prime \big\|_\infty.
    \end{align*}
    For arbitrary $\pi, \, \tilde \pi \in \Pi$, one has
    \begin{align*}
        \big\| V^\pi - V^{\tilde \pi} \big\|_\infty 
        &= \big\| \calT^\pi V^\pi - \calT^{\tilde \pi} V^{\tilde \pi} \big\|_\infty \\
        &= \big\| \calT^\pi V^\pi - \calT^\pi V^{\tilde \pi} +\calT^{\pi} V^{\tilde\pi} - \calT^{\tilde \pi} V^{\tilde \pi} \big\|_\infty \\
        &\leq \big\| \calT^\pi V^\pi - \calT^\pi V^{\tilde \pi} \big\| + \big\| \calT^{\pi} V^{\tilde\pi} - \calT^{\tilde \pi} V^{\tilde \pi} \big\|_\infty \\
        &\leq \gamma \, \big\| V^\pi - V^{\tilde\pi} \big\|_\infty + \big\| \calT^{\pi} V^{\tilde\pi} - \calT^{\tilde \pi} V^{\tilde \pi} \big\|_\infty.
    \end{align*}
    Furthermore,
    \begin{align*}
        \big\| \calT^{\pi} V^{\tilde\pi} - \calT^{\tilde \pi} V^{\tilde \pi} \big\|_\infty &= \big\| r^\pi - r^{\tilde\pi} + \gamma \big( P^{\pi} - P^{\tilde \pi} \big) V^{\tilde\pi} \big\|_\infty \\
        &\leq \big\| r^\pi - r^{\tilde\pi} \big\|_\infty + \frac{\gamma}{1-\gamma} \big\| P^{\pi} - P^{\tilde\pi} \big\|_\infty \\
        &\leq |\calA| \cdot \| \pi - \tilde \pi \|_\infty + \frac{\gamma |\calS||\calA|}{1-\gamma} \cdot \| \pi - \tilde\pi \|_\infty .
    \end{align*}
    Therefore,
    \begin{align*}
        \big\| V^\pi - V^{\tilde \pi} \big\|_\infty \leq \frac{1}{1-\gamma} \Big( |\calA| + \frac{\gamma|\calS||\calA|}{1-\gamma} \Big) \cdot \big\| \pi - \tilde\pi \big\|_\infty,
    \end{align*}
    which completes the proof.
\end{proof}

Finally, for every $s\in\calS_{>1}$, we define the $\psi^\prime$-score gap inside the optimal action set,
\begin{align*}
    \forall\, k: \quad M_k(s) := \max_{a^*\in\calA^*_s} \, \psi^\prime(\pi_k(a^*|s)) - \min_{\tilde a^* \in \calA^*_s} \, \psi^\prime(\pi_k(\tilde a^*|s)).
\end{align*}

\begin{proof}[Proof of Lemma~\ref{lem:summable-error-gives-policy-convergence}]
    The proof can be decomposed into four steps.
    
    \noindent \textbf{Step 1: Existence of $\lim_{k\to\infty} D^{\pi^*}_{\pi_k}(s)$. } For any fixed optimal policy $\pi^* \in \Pi^*$ and any state $s\in\calS$, let $p = \pi^*(\cdot|s)$ and apply the three-point-descent lemma (Lemma~\ref{lem:three-point-descent}),
    \begin{align*}
        \eta \, \inner{\pi_{k+1}(\cdot|s) - \pi^*(\cdot|s)}{Q^{\pi_k}(s,\cdot)} \geq D^{\pi_{k+1}}_{\pi_k}(s) + D^{\pi^*}_{\pi_{k+1}}(s) - D^{\pi^*}_{\pi_k}(s).
    \end{align*}
    It follows that
    \begin{align*}
        D^{\pi^*}_{\pi_{k+1}}(s) - D^{\pi^*}_{\pi_k}(s) &\leq \eta  \underbrace{\inner{\pi_{k+1}(\cdot|s) - \pi^*(\cdot|s)}{Q^*(s,\cdot)}}_{\leq 0} - \underbrace{D^{\pi_{k+1}}_{\pi_k}(s)}_{\geq 0} + \eta \inner{\pi_{k+1}(\cdot|s) - \pi^*(\cdot|s)}{Q^{\pi_k}(s,\cdot) - Q^*(s,\cdot)} \\
        &\leq \eta \inner{\pi_{k+1}(\cdot|s) - \pi^*(\cdot|s)}{Q^{\pi_k}(s,\cdot) - Q^*(s,\cdot)} \\
        &\leq 2\eta \cdot \big\| Q^* - Q^{\pi_k} \big\|_\infty \leq 2\gamma\eta \cdot \delta_k,
    \end{align*}
    where $\delta_k := \| V^* - V^{\pi_k} \|_\infty$ and the last inequality is due to~\eqref{eq:Q-V-A-bounded}. Now define $x_k := D^{\pi^*}_{\pi_k}(s)$, $y_k := 2\gamma\eta \cdot \delta_k$, and $z_k := x_k - \sum_{t=0}^{k-1}y_t$. Notice that
    \begin{align*}
        z_{k+1} - z_{k} = x_{k+1} - x_k - y_k = D^{\pi^*}_{\pi_{k+1}}(s) - D^{\pi^*}_{\pi_{k}}(s) - 2\gamma\eta\cdot \delta_k \leq 0.
    \end{align*}
    Thus $\{ z_k \}_{k\geq 0}$ is non-increasing. On the other hand, as $\sum_{k=0}^\infty y_k = 2\gamma \eta \sum_{k=0}^\infty \delta_k < +\infty$,
    \begin{align*}
        z_k = x_k - \sum_{t=0}^{k-1}y_t \geq x_k - \sum_{t=0}^\infty y_t > -\infty,
    \end{align*}
    thus $\{ z_k \}_{k \geq 0}$ is bounded from below. Then by monotone convergence theorem, the limit $z_\infty := \lim_{k\to\infty} z_k$ exists. Since $x_k = z_k + \sum_{t=0}^{k-1} y_t$, the limit of $x_k$ also exists, given by $x_\infty := \lim_{k\to\infty} x_k = z_\infty + \sum_{t=0}^\infty y_t$. Therefore we conclude that
    \begin{align}  \label{eq:pf:Bregman-divergence-limit}
        \lim_{k\to\infty} D^{\pi^*}_{\pi_k}(s) \;\; \mbox{exists for any } s\in\calS, \; \pi^* \in \Pi^*.
    \end{align}

    \noindent \textbf{Step 2: Every accumulation point is an optimal policy.} Notice that $\pi_k$ lies in $\Pi$, which is bounded and closed. By Bolzano-Weierstrass theorem, there exists a converging sub-sequence $\{ \pi_{k_i} \}_{i \geq 0}$, i.e.,
    \begin{align*}
        \pi_{k_i} \stackrel{i\to\infty}{\to} \pi_\infty
    \end{align*}
    for some $\pi_\infty \in \Pi$. As the value function $V^\pi$ is a continuous function w.r.t. the policy $\pi$ (Lemma~\ref{lem:V-continuity}), the value sequence $\{ V^{\pi_{k_i}} \}_{i \geq 0}$ also converges. By $\sum_k \delta_k < \infty$, we have $\delta_k \to 0$ and thus  $V^{\pi_k} \rightarrow V^*$. As $\{ V^{\pi_{k_i}} \}_{i \geq 0}$ is a converging sub-sequence, its limit is also the optimal value, i.e., $V^{\pi_{k_i}} \stackrel{i\to\infty}{\to} V^*$. This implies that $V^{\pi_\infty} = V^*$, and thus $\pi_\infty \in \Pi^*$. 

    Now recall the definition of $\calS_{=1} := \{ s\in\calS: \; |\calA^*_s| =1 \}$ and $\calS_{>1} := \{ s\in\calS: \; |\calA^*_s| >1 \}$. As $\pi_\infty \in \Pi^*$, we immediately have
    \begin{alignat*}{2}
        &\forall\, s \in \calS, \; \forall\, a^\prime \not\in\calA^*_s: \quad &&\pi_\infty(a^\prime | s) = 0, \\
        &\forall \, s\in \calS_{=1}, \; \forall \, a^* \in \calA^*_s: \quad &&\pi_\infty(a^*|s) = 1.
    \end{alignat*}
    For any optimal action of $s\in\calS_{>1}$, we claim that \[\pi_\infty(a^*|s) \in \interior \, \dom \, \psi\]
    and will prove this via contradiction. 
    
    For Case~1, we have $[0,1] \subseteq \interior \, \dom \, \psi$ and thus $\pi_\infty(a^*|s) \in \interior\, \dom \, \psi$ holds automatically. Now consider Cases~2, 3, 4. Suppose that there exists $s\in\calS_{>1}$ and $a^*_0 \in \calA^*_s$ such that $\pi_\infty(a^*_0|s) \in \mbox{bd} \, \dom \, \psi$. In this context, it implies that $\pi_\infty(a^*_0|s) \in \{ 0,1 \}$. If $\pi_\infty(a^*_0|s) = 0$ (which implies that $0 \in \mbox{bd} \, \dom \, \psi$), then $\pi_{k_i}(a^*_0|s) \to 0$ and $\psi^\prime(\pi_{k_i}(a^*_0|s)) \to -\infty$ as $0 \in \mbox{bd} \, \dom \, \psi$. On the other hand, when $k_i$ is large enough there holds $b^{\pi_{k_i}}_s < 1/2$, yielding that $\max_{a^*\in\calA^*_s} \pi_{k_i}(a^*|s) \geq 1/(2|\calA|)$. Thus when $k_i$ is large enough,
    \begin{align*}
        M_{k_i}(s) &= \max_{a^* \in \calA^*_s} \, \psi^\prime(\pi_{k_i}(a^*|s)) - \min_{\tilde a^* \in \calA^*_s} \, \psi^\prime(\pi_{k_i}(\tilde a^*|s)) \\
        &\geq \max_{a^* \in \calA^*_s} \, \psi^\prime(\pi_{k_i}(a^*|s)) - \psi^\prime(\pi_{k_i}(a^*_0|s))  \\
        &= \psi^\prime \big(\max_{a^* \in \calA^*_s}\pi_{k_i}(a^*|s)\big) - \psi^\prime(\pi_{k_i}(a^*_0|s)) \geq \underbrace{\psi^\prime\Big( \frac{1}{2|\calA|} \Big)}_{> -\infty} - \underbrace{\psi^\prime(\pi_{k_i}(a^*_0|s))}_{\to-\infty} \to +\infty,
    \end{align*}
    where in the second equality we use the strict increasing property of $\psi^\prime$. If $\pi_\infty(a^*_0|s) = 1$ (implying that $1 \in \mbox{bd} \, \dom\, \psi$), then we have $\pi_\infty(\hat a^*|s) = 0$ for any $\hat a^* \in\calA^*_s$ and $\hat a^* \neq a^*_0$. As $1 \in \mbox{bd} \, \dom \, \psi$, there holds $\psi^\prime(\pi_{k_i}(a^*_0|s)) \to +\infty$. On the other hand, for any $\hat a^* \in\calA^*_s$ and $\hat a^* \neq a^*_0$, there holds $\pi_{k_i}(\hat a^*|s) \to 0$ and thus $\psi^\prime(\pi_{k_i}(\hat a^*|s)) \leq \psi^\prime(1/2) <+\infty$ when $k_i$ is large enough. Therefore,
    \begin{align*}
        M_{k_i}(s)&= \max_{a^* \in \calA^*_s} \, \psi^\prime(\pi_{k_i}(a^*|s)) - \min_{\tilde a^* \in \calA^*_s} \, \psi^\prime(\pi_{k_i}(\tilde a^*|s)) \\
        &\geq \psi^\prime(\pi_{k_i}(a^*_0|s)) - \psi^\prime(\pi_{k_i}(\hat a^*|s)) \geq \psi^\prime(\pi_{k_i}(a^*_0|s)) - \psi^\prime(1/2) \to +\infty.
    \end{align*}
    Therefore, we always have $M_{k_i}(s) \to +\infty$ if there exists an optimal action $a^*_0$ such that $\pi_\infty(a^*_0|s) \in \mbox{bd} \, \dom \, \psi$.
    To obtain the contradiction, recall that $\| V^* - V^{\pi_k} \|_\infty \to 0$ by $\sum_k \| V^* - V^{\pi_k} \|_\infty < \infty$. Then by Lemma~\ref{lem:sub-optimal-probabilities}, there exists a finite time $T$ such that $b^{\pi_k}_s < 1/2$ for all $k \geq T$, and thus $\max_{a^*\in\calA^*_s} \pi_k(a^*|s) > 0$. Now, pick
    \begin{align*}
        a^*_{k+1} \in \underset{a^*\in\calA^*_s}{\arg\max} \; \pi_{k+1}(a^*|s), \quad \tilde a^*_{k+1} \in \underset{a^*\in\calA^*_s}{\arg\min} \; \pi_{k+1}(\tilde a^*|s).
    \end{align*}
    As $\psi^\prime$ is monotonic increasing over $\interior \, \dom \, \psi$, one has $\psi^\prime(\pi_{k+1}(a^*_{k+1}|s)) = \max_{a^*\in\calA^*_s} \, \psi^\prime(\pi_{k+1}(a^*|s))$ and $\psi^\prime(\pi_{k+1}(\tilde a^*_{k+1} | s)) = \min_{a^*\in\calA^*_s} \, \psi^\prime(\pi_{k+1}(a^*|s))$. Since $\pi_{k+1}(a^*_{k+1}|s) > 0$, by Lemma~\ref{lem:KKT-cond},
    \begin{align*}
        &\psi^\prime \big( \pi_{k+1}(a^*_{k+1} | s) \big) - \psi^\prime\big( \pi_k(a^*_{k+1}|s) \big) = \eta \, Q^{\pi_k}(s,a^*_{k+1}) + \nu_s, \\
        &\psi^\prime \big( \pi_{k+1}(\tilde a^*_{k+1} |s) \big) - \psi^\prime \big( \pi_k(\tilde a^*_{k+1}|s) \big) \geq \eta \, Q^{\pi_k}(s, \tilde a^*_{k+1}) + \nu_s.
    \end{align*}
    It follows that
    \begin{align*}
        &\phantom{=\,\,\,}\psi^\prime \big( \pi_{k+1}(a^*_{k+1} | s) \big) - \psi^\prime \big( \pi_{k+1}(\tilde a^*_{k+1} |s) \big) \\
        &\leq \psi^\prime\big( \pi_k(a^*_{k+1}|s) \big) - \psi^\prime \big( \pi_k(\tilde a^*_{k+1}|s) \big) + \eta \, \big[ Q^{\pi_k}(s,a^*_{k+1}) - Q^{\pi_k}(s,\tilde a^*_{k+1}) \big] \\
        &\leq \psi^\prime\big( \pi_k(a^*_{k+1}|s) \big) - \psi^\prime \big( \pi_k(\tilde a^*_{k+1}|s) \big) + \eta \, \big[ Q^{*}(s,a^*_{k+1}) - Q^*(s,\tilde a^*_{k+1}) \big] + 2\eta \, \big\| Q^* - Q^{\pi_k} \big\|_\infty \\
        &\leq \psi^\prime\big( \pi_k(a^*_{k+1}|s) \big) - \psi^\prime \big( \pi_k(\tilde a^*_{k+1}|s) \big) + 2\eta \, \delta_k.
    \end{align*}
    Then there holds
    \begin{align*}
        M_{k+1}(s) &= \max_{a^*, \, \tilde a^* \in \calA^*_s} \; \Big\{ \psi^\prime(\pi_{k+1}(a^*|s)) - \psi^\prime(\pi_{k+1}(\tilde a^* | s)) \Big\} \\
        &= \psi^\prime\big(\pi_{k+1}(a^*_{k+1} |s)\big) - \psi^\prime\big( \pi_{k+1}(\tilde a^*_{k+1}|s) \big) \\[.3em]
        &\leq \psi^\prime\big( \pi_k(a^*_{k+1}|s) \big) - \psi^\prime \big( \pi_k(\tilde a^*_{k+1}|s) \big) + 2\eta \, \delta_k \\
        &\leq \max_{a^*, \, \tilde a^* \in \calA^*_s} \; \Big\{ \psi^\prime(\pi_{k}(a^*|s)) - \psi^\prime(\pi_{k}(\tilde a^* | s)) \Big\} + 2\eta \, \delta_k =M_k(s) + 2\eta \, \delta_k.
    \end{align*}
    By telescoping,
    \begin{align}  \label{eq:0-finite-1-infinite:upper-bound-for-M}
        \forall\, k \geq T: \quad M_k(s) \leq M_T(s) + 2 \eta \sum_{t=T}^{k-1} \delta_t \leq M_T(s) +  2\eta \sum_{t=0}^\infty \delta_t.
    \end{align}
    Therefore we obtain that 
    \begin{align}  \label{eq:bounded-M}
        \forall\, s\in \calS_{>1}:\quad \sup _{k \geq 0} \; M_k(s) \leq \max_{t \leq T} M_t(s) + \sup_{k > T} \, M_k(s) \leq \sum_{t=0}^T M_t(s) + M_T(s) + 2\eta \sum_k \delta_k < \infty.
    \end{align}
    This however contradicts $M_{k_i}(s) \to +\infty$. Therefore it is concluded that 
    \begin{align*}
        \forall\, s\in\calS_{>1}, \; \forall\, a^*\in\calA^*_s: \quad \pi_\infty(a^*|s) \in \interior \, \dom \, \psi.
    \end{align*}

    \noindent \textbf{Step 3: Showing that $\lim_{k\to\infty} D^{\pi_\infty}_{\pi_k}(s) = 0$.} Recall that $\pi_{k_i} \to \pi_\infty$. As $h$ is decomposable,
    \begin{align*}
        \forall\, s \in \calS: \quad D_h(\pi_\infty(\cdot|s) \, || \, \pi_{k_{i}}(\cdot|s)) &= \sum_{a\in\calA} d_\psi(\pi_\infty(a|s) \, || \, \pi_{k_i}(a|s)).
    \end{align*}
    By Lemma~\ref{lem:KKT-cond}, $\pi_{k_i}(a|s) \in \interior \, \dom \, \psi$. Then by Lemma~\ref{lem:pro-3.3-of-baischke}, for any $s\in\calS$ and $a\in\calA$,
    \begin{align*}
        \left . \begin{array}{r}
              \pi_{k_i}(a|s) \in \interior \, \dom \, \psi,  \\
              \pi_{k_i}(a|s) \to \pi_\infty(a|s) \in [0,1] \subseteq \dom \, \psi,
        \end{array} \right \} \;\; \Rightarrow \;\; d_\psi(\pi_\infty(a|s) \, || \, \pi_{k_i}(a|s)) \to 0.
    \end{align*}
    Therefore
    \begin{align*}
        \forall\, s\in\calS: \quad \lim_{i\to\infty} D^{\pi_\infty}_{\pi_{k_i}}(s) = 0.
    \end{align*}
    By Step 1, we know that $\lim_{k\to\infty} D^{\pi^*}_{\pi_k}(s)$ exists for all $s\in\calS$ and any $\pi^* \in \Pi^*$. As $\pi_\infty \in \Pi^*$, the limit $\lim_{k\to\infty} D^{\pi_\infty}_{\pi_k}(s)$ also exists for all $s\in\calS$. Since one of its subsequences $\big\{ D^{\pi_\infty}_{\pi_{k_i}}(s) \big\}$ converges to zero, the complete sequence converges to zero as well, i.e.,
    \begin{align*}
        \forall \, s\in\calS: \quad \lim_{k\to\infty} D^{\pi_\infty}_{\pi_k}(s) = 0.
    \end{align*}

    \noindent \textbf{Step 4: Deduce that $\pi_k \to \pi_\infty$.} As $h$ is decomposable,
    \begin{align*}
        &\forall\, s\in\calS : \quad \lim_{k\to\infty} \, D_h(\pi_\infty(\cdot|s) \, || \, \pi_k(\cdot|s)) = 0 \\
        \Longrightarrow \quad & \forall\, s\in\calS, \; \forall \, a\in\calA: \quad \lim_{k\to\infty} \, d_\psi (\pi_\infty(a|s) \, || \, \pi_k(a|s)) = 0.
    \end{align*}
    Since $b^{\pi_k}_s \to 0$ for all $s\in\calS$, one immediately has
    \begin{alignat*}{2}
        &\forall\, s\in\calS, \; \forall\, a^\prime \not\in\calA^*_s : \quad &&\pi_k(a^\prime | s) \to 0 = \pi_\infty(a^\prime | s), \\
        &\forall\, s \in \calS_{=1} , \; \forall a^* \in \calA^*_s: \quad &&\pi_k(a^*|s) \to 1 = \pi_\infty(a^*|s).
    \end{alignat*}
    It remains to consider the case that $(s,a^*)$ with $s\in\calS_{>1}$ and $a^* \in \calA^*_s$.  Suppose that $\pi_{k}(a^*|s) \not\to \pi_\infty(a^*|s)$. Then there exists a subsequence $\{ \pi_{k_j} \}_{j\geq0}$ satisfying
    \begin{align*}
        \forall\, j :\quad \big| \pi_{k_j}(a^*|s) - \pi_\infty(a^*|s) \big| \geq \epsilon > 0.
    \end{align*}
    Applying Bolzano-Weierstrass theorem again, there exists a converging sub-sub-sequence $\big\{ \pi_{k_{j_l}} \big\}_{l \geq 0}$. Letting $\hat\pi_\infty := \lim_{l\to\infty} \, \pi_{k_{j_l}}$, it is clear that $\hat\pi_\infty \neq \pi_\infty$. As we have shown that $D^{\pi_\infty}_{\pi_k}(s) \to 0$, there holds
    \begin{align*}
        \forall \, s\in\calS: \quad \lim_{l \to \infty} \, D^{\pi_\infty}_{\pi_{k_{j_l}}}(s) = 0,
    \end{align*}
    implying that
    \begin{align*}
        \forall\, s\in\calS_{>1}, \; \forall\, a^*\in\calA^*_s: \quad \lim_{l \to \infty} \, d_\psi (\pi_\infty(a^*|s) \, || \, \pi_{k_{j_l}}(a^*|s)) = 0.
    \end{align*}
    Notice that $\pi_{k_{j_l}}(a^*|s) \in \interior \, \dom \, \psi$ by Lemma~\ref{lem:KKT-cond}. Since $\pi_\infty(a^*|s) \in \interior \, \dom \, \psi$, Lemma~\ref{lem:thm-3.8-iii-3.9-of-bauschke} implies that
    \begin{align*}
        \left . \begin{array}{r}
            \pi_\infty(a^*|s) \in \interior \, \dom\, \psi, \\
            \pi_{k_{j_l}}(a^*|s) \in \interior \, \dom \, \psi, \; \pi_{k_{j_l}}(a^*|s) \to \hat\pi_\infty(a^*|s), \\
            \{ \pi_\infty(a^*|s), \; \hat\pi_\infty(a^*|s) \} \cap \interior \, \dom \, \psi \neq \emptyset, \; d_\psi(\pi_\infty(a^*|s) \, \| \, \pi_{k_{j_l}}(a^*|s)) \to 0
        \end{array} \right \} \;\; \Rightarrow \;\; \pi_\infty(a^*|s)  = \hat\pi_\infty(a^*|s).
    \end{align*}
    This contradicts $\big| \pi_{k_j}(a^*|s) - \pi_\infty(a^*|s) \big| \geq \epsilon > 0$, which concludes the proof.
\end{proof}

\subsection{Proof of Lemma~\ref{lem:gap-metric-increase}}
\label{sec:pf:lem:gap-metric-increase}
\begin{proof}[Proof of Lemma~\ref{lem:gap-metric-increase}~\ref{lem:gap-metric-increase-lower}]
    Pick $a^\prime_{k+1} \not\in\calA^*_s$ and $a^*_{k+1} \in \calA^*_s$ such that
    \begin{align*}
        \psi^\prime (\pi_{k+1}(a^\prime_{k+1} | s)) = \max_{a^\prime\not\in\calA^*_s} \, \psi^\prime (\pi_{k+1}(a^\prime|s)), \quad \psi^\prime(\pi_{k+1}(a^*_{k+1}|s)) = \min_{a^* \in \calA^*_s} \, \psi^\prime (\pi_{k+1}(a^*|s)).
    \end{align*}
    Note that $\psi^\prime$ is strictly increasing on $\interior \, \dom \, \psi$. Thus we know that $\pi_{k+1}(a^\prime_{k+1}|s) = \max_{a^\prime \not \in \calA^*_s} \, \pi_{k+1}(a^\prime | s)$ and $\pi_{k+1}(a^*_{k+1}|s) = \min_{a^* \in \calA^*_s} \pi_{k+1}(a^*|s)$. As $b^{\pi_{k+1}}_s> 0$, one has $\pi_{k+1}(a^\prime_{k+1}|s) > 0$. Thus, the application of Lemma~\ref{lem:KKT-cond} yields that
    \begin{align*}
        &\psi^\prime (\pi_{k+1}(a^*_{k+1} |s )) - \psi^\prime (\pi_{k}(a^*_{k+1} | s)) \geq \eta \, Q^{\pi_k}(s,a^*_{k+1}) + \nu_s, \\
        &\psi^\prime(\pi_{k+1}(a^\prime_{k+1}|s)) - \psi^\prime(\pi_k(a^\prime_{k+1}|s)) = \eta\, Q^{\pi_k}(s, a^\prime_{k+1}) + \nu_s.
    \end{align*}
    Consequently, for $k \geq T(\sigma)$, there holds $\| Q^* - Q^{\pi_k} \|_\infty \leq \sigma$ and thus
    \begin{align*}
        &\phantom{=\,\,\,}\psi^\prime (\pi_{k+1}(a^*_{k+1} |s )) - \psi^\prime(\pi_{k+1}(a^\prime_{k+1}|s)) \\
        &\geq \psi^\prime (\pi_{k}(a^*_{k+1} | s)) - \psi^\prime(\pi_k(a^\prime_{k+1}|s)) + \eta \, \big[ Q^{\pi_k}(s, a_{k+1}^*) - Q^{\pi_k}(s, a^\prime_{k+1}) \big] \\
        &\geq \psi^\prime (\pi_{k}(a^*_{k+1} | s)) - \psi^\prime(\pi_k(a^\prime_{k+1}|s)) + \eta \, \big[ Q^*(s,a^*_{k+1}) - Q^*(s, a^\prime_{k+1}) \big]
         - 2\eta \, \big\| Q^* - Q^{\pi_k} \big\|_\infty \\
        &\geq \psi^\prime (\pi_{k}(a^*_{k+1} | s)) - \psi^\prime(\pi_k(a^\prime_{k+1}|s)) + \eta \big( \Delta_s - 2\sigma \big).
    \end{align*}
    Therefore,
    \begin{align*}
        G_{k+1}(s) &= \min_{a^* \in \calA^*_s} \, \psi^\prime(\pi_{k+1}(a^*|s)) - \max_{a^\prime \not \in \calA^*_s} \, \psi^\prime(\pi_{k+1}(a^\prime | s)) \\
        &= \psi^\prime(\pi_{k+1}(a^*_{k+1} | s)) - \psi^\prime(\pi_{k+1}(a^\prime_{k+1}| s)) \\[.5em]
        &\geq \psi^\prime(\pi_{k}(a^*_{k+1} | s)) - \psi^\prime(\pi_{k}(a^\prime_{k+1}| s)) + \eta \,(\Delta_s - 2\sigma) \\[.5em]
        &\geq \min_{a^* \in \calA^*_s} \, \psi^\prime(\pi_k(a^*|s)) - \max_{a^\prime \not \in \calA^*_s} \,\psi^\prime(\pi_{k}(a^\prime| s)) + \eta \,(\Delta_s - 2\sigma) = G_k(s) + \eta \,(\Delta_s -2 \sigma).
    \end{align*}    
\end{proof}

\begin{proof}[Proof of Lemma~\ref{lem:gap-metric-increase}~\ref{lem:gap-metric-increase-upper}]
    Pick $a^*_{k+1} \in \calA^*_s$ such that
    \begin{align*}
        \psi^\prime (\pi_{k+1}(a^*_{k+1}|s)) = \max_{a^*\in\calA^*_s} \, \psi^\prime (\pi_{k+1}(a^*|s)).
    \end{align*}
    As $b^{\pi_{k+1}}_s < 1$, we know that $\pi_{k+1}(a^*_{k+1} | s) = \max_{a^*\in\calA^*_s} \pi_{k+1}(a^*|s) > 0$. By Lemma~\ref{lem:KKT-cond}, for any $a^\prime \not \in \calA^*_s$,
    \begin{align*}
        &\psi^\prime(\pi_{k+1}(a^*_{k+1}|s)) - \psi^\prime(\pi_k(a^*_{k+1}|s)) = \eta \, Q^{\pi_k}(s, a^*_{k+1}) + \nu_s, \\
        &\psi^\prime(\pi_{k+1}(a^\prime | s)) - \psi^\prime(\pi_k(a^\prime|s)) \geq \eta \, Q^{\pi_k}(s, a^\prime) + \nu_s.
    \end{align*}
    It follows that for $k \geq T(\sigma)$,
    \begin{align*}
        \psi^\prime(\pi_{k+1}(a^*_{k+1}|s)) - \psi^\prime(\pi_{k+1}(a^\prime | s)) &\leq \psi^\prime(\pi_k(a^*_{k+1}|s)) - \psi^\prime(\pi_k(a^\prime|s)) + \eta \, \big[ Q^{\pi_k}(s,a^*_{k+1}) - Q^{\pi_k}(s,a^\prime) \big] \\
        &\leq \psi^\prime(\pi_k(a^*_{k+1}|s)) - \psi^\prime(\pi_k(a^\prime|s)) + \eta \, \big[ Q^*(s,a^*_{k+1}) - Q^*(s,a^\prime) + 2\sigma \big] \\
        &=\psi^\prime(\pi_k(a^*_{k+1}|s)) - \psi^\prime(\pi_k(a^\prime|s)) + \eta \, \big[ V^*(s) - Q^*(s,a^\prime) + 2\sigma \big].
    \end{align*}
    Thus,
    \begin{align*}
        G_{k+1}(s,a^\prime) &= \max_{a^*\in\calA^*_s} \, \psi^\prime(\pi_{k+1}(a^*|s)) - \psi^\prime(\pi_{k+1}(a^\prime|s)) \\
        &= \psi^\prime(\pi_{k+1}(a^*_{k+1}|s)) - \psi^\prime(\pi_{k+1}(a^\prime|s)) \\[.5em]
        &\leq \psi^\prime(\pi_k(a^*_{k+1}|s)) - \psi^\prime(\pi_k(a^\prime|s)) + \eta \, \big[ V^*(s) - Q^*(s,a^\prime) + 2\sigma \big] \\[.5em]
        &\leq \max_{a^*\in\calA^*_s} \, \psi^\prime (\pi_k(a^*|s)) - \psi^\prime(\pi_k(a^\prime|s)) + \eta \, \big[ V^*(s) - Q^*(s,a^\prime) + 2\sigma \big] \\
        &= G_k(s,a^\prime) + \eta \, \big[ V^*(s) - Q^*(s,a^\prime) + 2\sigma \big].
    \end{align*}
\end{proof}

\begin{proof}[Proof of Lemma~\ref{lem:gap-metric-increase}~\ref{lem:termination-condition}]
\phantom{sdf}

    \noindent\textbf{First part:} Since $\pi_k(a|s)\in \interior\, \dom \, \psi$ by Lemma~\ref{lem:KKT-cond},  $b_s^{\pi_k}=0$ implies that $\pi_k(a^\prime|s) = 0$ for all $a^\prime \not\in\calA^*_s$ and thus $0\in \interior\, \dom \, \psi$. Now suppose that $b^{\pi_{k+1}}_s > 0$. Then there exists a suboptimal action $a^\prime \not \in \calA^*_s$ such that $\pi_{k+1}(a^\prime|s) > 0$. As $b^{\pi_k}_s = 0$, one has
    \begin{align*}
        \sum_{a^* \in \calA^*_s} \pi_k(a^*|s) = 1 , \quad \sum_{a^*\in\calA^*_s} \pi_{k+1}(a^*|s) \leq 1 - \pi_{k+1}(a^\prime|s) < 1.
    \end{align*}
    It implies that there exists at least one optimal action $a^* \in \calA^*_s$ such that $\pi_{k+1}(a^*|s) < \pi_k(a^*|s)$. Let
    \begin{align*}
        \epsilon := \min \, \Big\{ \frac{1}{2} \pi_{k+1}(a^\prime|s), \;\; \frac{1}{2} \big( \pi_k(a^*|s) - \pi_{k+1}(a^*|s) \big)  \Big\} > 0.
    \end{align*}
    Define the following probability distribution $p \in \Delta(\calA)$,
    \begin{align*}
        p(a) := \begin{cases}
            \pi_{k+1}(a|s) , & \mbox{if } \; a \neq a^\prime, \; a\neq a^*, \\
            \pi_{k+1}(a^\prime | s) - \epsilon > 0 , & \mbox{if } \; a = a^\prime, \\
            \pi_{k+1}(a^*|s) + \epsilon < \pi_k(a^*|s) , & \mbox{if } \; a = a^*.
        \end{cases}
    \end{align*}
    For any $q \in \Delta(\calA)$, let $f(q) := \inner{q}{Q^{\pi_k}(s,\cdot)} - \frac{1}{\eta} D_h(q \, \| \, \pi_{k}(\cdot|s))$. By the PMD policy update there holds $f(\pi_{k+1}(\cdot|s)) = \max_{q \in \Delta(\calA)} f(q)$. Now consider the difference $f(p) - f(\pi_{k+1}(\cdot|s))$:
    \begin{align*}
        &\phantom{=\,\,\,}f(p) - f(\pi_{k+1}(\cdot|s)) \\
        &= \sum_{a\in\calA} \big( p(a) - \pi_{k+1}(a|s) \big) \cdot Q^{\pi_k}(s,a)  - \frac{1}{\eta} \Big( D_h(p \, \| \, \pi_k(\cdot|s)) - D_h (\pi_{k+1}(\cdot|s) \, \| \, \pi_k(\cdot|s)) \Big) \\
        &= \sum_{a\in\calA} \big( p(a) - \pi_{k+1}(a|s) \big) \cdot Q^{\pi_k}(s,a) - \frac{1}{\eta} \sum_{a\in\calA} \Big( d_\psi (p(a) \, \| \, \pi_k(a|s)) - d_\psi (\pi_{k+1}(a|s) \, \| \, \pi_k(a|s)) \Big).
    \end{align*}
    For the first term, noting that $k \geq T(\sigma)$,
    \begin{align*}
        \sum_{a\in\calA} \big( p(a) - \pi_{k+1}(a|s) \big) \cdot Q^{\pi_k}(s,a) = \epsilon \cdot \big( Q^{\pi_k}(s,a^*) - Q^{\pi_k}(s,a^\prime) \big) \geq (\Delta - 2\sigma) \cdot \epsilon.
    \end{align*}
    For the divergence term, as $\pi_k(a|s)$ and $\pi_{k+1}(a|s)$ both lie in $\interior \, \dom \, \psi$, one has $p(a) \in \interior\, \dom \, \psi$ for all $a\in\calA$. Therefore,
    \begin{align*}
        &\phantom{=\,\,\,}\sum_{a\in\calA} \Big( d_\psi (p(a) \, \| \, \pi_k(a|s)) - d_\psi (\pi_{k+1}(a|s) \, \| \, \pi_k(a|s)) \Big) \\
        &= d_{\psi} \big( \pi_{k+1}(a^\prime|s) - \epsilon \, \| \, \pi_k(a^\prime|s) \big) - d_{\psi} \big( \pi_{k+1}(a^\prime|s) \, \| \, \pi_k(a^\prime|s) \big) \\
        &\quad\quad\quad\quad + d_{\psi} \big( \pi_{k+1}(a^*|s) + \epsilon \, \| \, \pi_k(a^*|s) \big) - d_{\psi} \big( \pi_{k+1}(a^*|s) \, \| \, \pi_k(a^*|s) \big) \\[.5em]
        &= \psi \big( \pi_{k+1}(a^\prime |s ) - \epsilon \big) - \psi\big( \pi_{k+1}(a^\prime | s) \big) + \epsilon \cdot \psi^\prime \big( \pi_k(a^\prime|s) \big) \\
        &\quad\quad\quad\quad + \psi\big( \pi_{k+1}(a^*|s) + \epsilon \big) - \psi \big( \pi_{k+1}(a^*|s) \big) - \epsilon \cdot \psi^\prime \big( \pi_k(a^*|s) \big).
    \end{align*}
    Noticing that $\psi$ is continuous on $[0,1]$ and is differentiable on $\interior \, \dom \, \psi$, one can apply the mean value theorem to obtain that
    \begin{align*}&\phantom{=\,\,\,}\sum_{a\in\calA} \Big( d_\psi (p(a) \, \| \, \pi_k(a|s)) - d_\psi (\pi_{k+1}(a|s) \, \| \, \pi_k(a|s)) \Big)\\
        &= \epsilon \cdot \ls[-\psi^\prime (\xi_1) +  \psi^\prime \big( \pi_k(a^\prime|s) \big) + \psi^\prime(\xi_2) - \psi^\prime \big( \pi_k(a^*|s) \big) \rs],
    \end{align*}
    where 
    \begin{align*}
        &0 = \pi_k(a^\prime|s) < \frac{1}{2}\pi_{k+1}(a^\prime|s) \leq \pi_{k+1}(a^\prime|s) - \epsilon \leq  \xi_1 \leq \pi_{k+1}(a^\prime|s) , \\
        &\pi_{k+1}(a^*|s) \leq \xi_2 \leq \pi_{k+1}(a^*|s) + \epsilon \leq \frac{1}{2} \big( \pi_k(a^*|s) + \pi_{k+1}(a^*|s) \big)  < \pi_k(a^*|s).
    \end{align*}
    By the strict increasing monotonicity of $\psi^\prime$, we obtain that
    \begin{align*}
        \sum_{a\in\calA} \Big( d_\psi (p(a) \, \| \, \pi_k(a|s)) - d_\psi (\pi_{k+1}(a|s) \, \| \, \pi_k(a|s)) \Big) \leq 0.
    \end{align*}
    Putting it all together,
    \begin{align*}
        f(p) - f(\pi_{k+1}(\cdot|s)) \geq (\Delta - 2\sigma)\cdot  \epsilon > 0,
    \end{align*}
    which implies that $f(p) > f(\pi_{k+1}(\cdot|s))$, contradicting $f(\pi_{k+1}(\cdot|s)) = \max_{q\in\Delta(\calA)} f(q)$. Therefore there must hold $b^{\pi_{k+1}}_s = 0$, which further gives that $b^{\pi_t}_s = 0$ for all $t \geq k$.

    \vspace{1em}

    \noindent \textbf{Second part:} First noticing that $b^{\pi_k}_{s^\prime} = 0$ holds automatically for $s^\prime \not\in\tilde \calS$, there holds $b^{\pi_k}_s = 0$ for all $s\in\calS$. By Lemma~\ref{lem:sub-optimal-probabilities}, there holds $V^{\pi_k} = V^*$ and thus $\pi_k \in \Pi^*$. Next it suffices to show $\pi_{k+1}=\pi_k$. If this is not true, then there exists $s\in\calS$ and $a\in\calA$ such that $\pi_{k+1}(a|s) \neq \pi_k(a|s)$. Recall the function
    \begin{align*}
        f(p) = \inner{p}{Q^{\pi_k}(s,\cdot)} - \frac{1}{\eta} D_h \big( p \, \| \, \pi_k(\cdot|s) \big).
    \end{align*}
    As $\pi_k \in \Pi^*$,
    \begin{align*}
        f(p) &= \inner{p}{Q^*(s,\cdot)} - \frac{1}{\eta} D_h \big( p \, \| \, \pi_k(\cdot|s) \big) \\
        &= \underbrace{\inner{p}{A^*(s,\cdot)} - \frac{1}{\eta} D_h \big( p \, \| \, \pi_k(\cdot|s) \big)}_{:= g(p)} + V^*(s).
    \end{align*}
    By the PMD policy update we know that $\pi_{k+1}(\cdot|s) = \arg\,\max_{p\in\Delta(\calA)} g(p)$. Note that
    \begin{align*}
        g\big( \pi_{k+1}(\cdot|s) \big) &= \inner{\pi_{k+1}(\cdot|s)}{A^*(s,\cdot)} - \frac{1}{\eta} D_h \big( \pi_{k+1}(\cdot|s) \, \| \, \pi_k(\cdot|s) \big) \\
        &\leq -\frac{1}{\eta} D_h \big( \pi_{k+1}(\cdot|s) \, \| \, \pi_k(\cdot|s) \big) \\
        &= -\frac{1}{\eta} \sum_{a^\prime\in\calA} d_{\psi} \big( \pi_{k+1}(a^\prime|s) \, \| \, \pi_k(a^\prime|s) \big) \\
        &\leq -\frac{1}{\eta} d_{\psi} \big( \pi_{k+1}(a|s) \, \| \, \pi_k(a|s) \big).
    \end{align*}
    As $\pi_{k+1}(a|s)$, $\pi_k(a|s)$ lie in $\interior \, \dom \, \psi$ and $\psi$ is strictly convex on $\interior\, \dom \, \psi$, one has $d_{\psi} \big( \pi_{k+1}(a|s) \, \| \, \pi_k(a|s) \big) > 0$ as $\pi_{k+1}(a|s) \neq \pi_k(a|s)$. Thus we obtain that $g\big( \pi_{k+1}(\cdot|s) \big) < 0$. In contrast, it is clear that $g\big( \pi_k(\cdot|s) \big) = 0$, and thus $g\big( \pi_{k+1}(\cdot|s) \big) < g\big( \pi_{k}(\cdot|s) \big)$ which leads to a contradiction.
\end{proof}

\subsection{Proof of Lemma~\ref{lem:integral-discrimination}}
\label{sec:pf:lem:integral-discrimination}
As $f$ is strictly decreasing and continuous with $f(\epsilon) < \infty$ and $\lim_{x \downarrow 0} f(x) = +\infty$, its inverse function $f^{-1}: [f(\epsilon), \infty) \to (0, \epsilon]$ is well-defined, non-negative, continuous and strictly decreasing. 
To establish the integrability of $f^{-1}$, we employ the layer-cake representation for the non-negative function $f$ on the interval $(0, \epsilon]$:
\begin{equation*}
    \int_0^\epsilon f(x) dx = \int_0^\infty \mu\big(\{x \in (0, \epsilon] : f(x) > y\}\big) dy,
\end{equation*}
where $\mu$ denotes the Lebesgue measure. For $0 \le y \le f(\epsilon)$, the strict monotonicity of $f$ implies $f(x) \ge f(\epsilon) \ge y$ for all $x \in (0, \epsilon]$, yielding a constant subset measure of $\epsilon$. For $y > f(\epsilon)$, the condition $f(x) > y$ is  equivalent to $0 < x < f^{-1}(y)$, which yields a measure of $f^{-1}(y)$. Therefore, by splitting the integral according to whether $y \le f(\epsilon)$ or $y > f(\epsilon)$, we obtain
\begin{equation*}
    \int_0^\epsilon f(x) dx = \int_0^{f(\epsilon)} \epsilon \, dy + \int_{f(\epsilon)}^\infty f^{-1}(y) dy = \epsilon f(\epsilon) + \int_{f(\epsilon)}^\infty f^{-1}(y) dy.
\end{equation*}
Rearranging the terms immediately demonstrates the finiteness of the integral of the inverse function over its infinite tail:
\begin{equation*}
    \int_{f(\epsilon)}^\infty f^{-1}(y) dy = \int_0^\epsilon f(t) dt - \epsilon f(\epsilon) < \infty.
\end{equation*}
Finally, define the sequence $y_k = c_0 + c_1 k$ for $k \ge 0$. Since $c_0 \ge f(\epsilon)$ and $c_1 > 0$, the sequence is strictly increasing and bounded below by $f(\epsilon)$. 
The monotonicity of $f^{-1}$ ensures that $f^{-1}(y_{k+1}) \le f^{-1}(y)$ for any $y \in [y_k, y_{k+1}]$. Integrating this over the interval of width $c_1$ gives $c_1 f^{-1}(y_{k+1}) \le \int_{y_k}^{y_{k+1}} f^{-1}(y) dy$. 
Summing this inequality for $k = 0, \dots, N$, we establish
\begin{equation*}
    c_1 \sum_{k=1}^{N+1} f^{-1}(y_k) \le \sum_{k=0}^{N} \int_{y_k}^{y_{k+1}} f^{-1}(y) dy = \int_{y_0}^{y_{N+1}} f^{-1}(y) dy \le \int_{f(\epsilon)}^{\infty} f^{-1}(y) dy < \infty.
\end{equation*}
The proof is complete since the first term $f^{-1}(c_0)$ is also finite.

\bibliographystyle{plainnat}
\bibliography{refs}

\end{document}